\documentclass[hidelinks,onefignum,onetabnum]{siamart251216}

\usepackage{amsmath,amssymb,mathtools,amsfonts}
\usepackage{enumitem}
\usepackage{array}

\newsiamremark{assumption}{Assumption}
\newsiamremark{remark}{Remark}
\newsiamremark{example}{Example}
\crefname{assumption}{Assumption}{Assumptions}
\Crefname{assumption}{Assumption}{Assumptions}
\crefname{definition}{Definition}{Definitions}
\Crefname{definition}{Definition}{Definitions}
\crefname{remark}{Remark}{Remarks}
\Crefname{remark}{Remark}{Remarks}
\crefname{example}{Example}{Examples}
\Crefname{example}{Example}{Examples}

\newcommand{\R}{\mathbb R}

\headers{Asynchronous Replanning in LQMFG}{Y. JIN, W. YAO, AND X. ZHANG}
\title{Asynchronous Replanning in Two-Population Linear--Quadratic Mean Field Games: Information Requirements and Stability
\thanks{This work was
supported by the National Natural Science Foundation of China (NSFC Grant
No.~12441101 and 12601852). Corresponding authors: Wang Yao and Xiao Zhang.}}
\author{Yuxin Jin\thanks{Y.~Jin is with ShenYuan Honors College and School of
Mathematical Sciences, Beihang University, Beijing 100191, China; Key
Laboratory of Mathematics, Informatics and Behavioral Semantics, Ministry of
Education, Beihang University, Beijing 100191, China
(\email{yxjin@buaa.edu.cn}).}
\and Wang Yao\thanks{W.~Yao is with School of Artificial Intelligence and
LMIB, Beihang University, Beijing 100191, China; Hangzhou International
Innovation Institute of Beihang University, Hangzhou 311115, China
(\email{yaowang@buaa.edu.cn}).}
\and Xiao Zhang\thanks{X.~Zhang is with the School of Mathematical Sciences,
Beihang University, Beijing 100191, China; Key Laboratory of Mathematics,
Informatics and Behavioral Semantics, Ministry of Education, Beihang
University, Beijing 100191, China (\email{xiao.zh@buaa.edu.cn}).}}

\ifpdf
\hypersetup{
  pdftitle={Asynchronous Replanning in Two-Population Linear-Quadratic Mean Field Games: Information Requirements and Stability},
  pdfauthor={Yuxin Jin, Wang Yao, Xiao Zhang}
}
\fi

\begin{document}
\maketitle

\begin{abstract}
We study asynchronous continuation replanning in a two-population
linear--quadratic mean field game with deterministic open-loop controls
and heterogeneous, possibly erroneous initial information. At exogenous
opportunities, one population recomputes its continuation response from
the physical aggregate state reached, with the opponent's active plan
frozen. The state--plan target needed to initialize replanning is
recoverable from local aggregate observations if and only if a
kernel-inclusion condition holds; full recovery of the hidden initial
input is unnecessary. For revisions accumulating before the terminal
time, spectral stability of the limiting two-response cycle yields
bounded physical execution and a finite state left limit. The
zero-extended continuation plans then converge strongly to the mean field
equilibrium restarted from that state. A scalar counterexample shows
that well-posed continuation problems and a convergent physical state
can coexist with divergent remaining plans. We also construct an exact
causal implementation from local aggregate observations and establish
finite-population robustness on every fixed finite opportunity prefix.
Revision records match with probability tending to one, and the
mean-square implementation error truncated to the matching event is
\(O(N_*^{-1})\), where \(N_*\) is the smaller population
size.
\end{abstract}

\begin{keywords}
linear--quadratic mean field games, functional observability,
continuation responses, asynchronous replanning, spectral stability,
Zeno accumulation, finite-population robustness
\end{keywords}
\begin{MSCcodes}
91A16, 49N10, 93B07, 93D20
\end{MSCcodes}

\section{Introduction}

We consider a two-population linear--quadratic mean field game with
deterministic open-loop controls and heterogeneous initial information.
Each population knows its own initial mean but may use an erroneous
estimate of the opponent's initial mean when computing its initial plan.
After execution has started, the populations receive exogenous
opportunities to revise their remaining plans. At an opportunity time
\(\tau\), the acting population recomputes its continuation response on
\([\tau,T]\) from the physical aggregate state reached, treating the
opponent's currently active continuation plan as frozen. The new plan is
adopted only when it differs from the population's current active plan.

The initial information structure and the revision mechanism lead to two
questions. First, a population does not know the hidden initial belief
that generated the opponent's plan and observes only its own aggregate
component, whereas a continuation response depends on the current
aggregate state and the opponent's remaining plan. The relevant question
is therefore whether the specific state--plan target required by the
continuation response is determined by the local aggregate observation,
rather than whether the hidden initial input can be reconstructed
completely. Second, revisions may occur repeatedly
at different times. The corresponding continuation responses are defined
on the spaces \(L^2([\tau,T])\), whose domains change with the revision
boundary. In particular, if revision times accumulate before \(T\),
regularity of the controls actually executed before the accumulation
time does not by itself imply convergence of the remaining continuation
plans.

The LQ structure makes both questions accessible through the same
continuation-response map. Under the local well-posedness condition
introduced below, the response of population \(m\) has the bounded affine
form
\[
    \mathfrak R_m(\tau,x,v)
    =
    K_m^\tau x+\mathcal J_m^\tau v+h_m^\tau,
\]
where \(x\) is the physical aggregate state at the continuation boundary,
\(v\) is the opponent's active continuation plan, and \(h_m^\tau\) is
the affine offset. The dependence on the boundary state and opponent
plan identifies the information required to compute a response, while
the operators \(\mathcal J_m^\tau\) determine how successive population
responses interact.

\paragraph{Response-relevant information}
The initialization at \(t_0\) recovers the boundary state \(X_{t_0}\)
and the opponent's remaining plan \(u_n^0|_{[t_0,T]}\), with \(n=3-m\).
It need not recover every coordinate of the hidden plan-generating belief.
Let \(\mathcal O_m\) be the residual observation operator and let
\(T_m^{\rm prot}\) map the hidden input to the unknown part of this
state--plan pair. By \cref{thm:protocol-sufficient-observation}, a
bounded initializer recovers the pair from local observations exactly when
\[
    \ker\mathcal O_m
    \subseteq
    \ker T_m^{\rm prot}.
\]
Thus indistinguishable hidden inputs must produce the same target.
Full observability is sufficient but not necessary: in
\cref{prop:singular-gramian-response-example}, both targets are recoverable
despite singular full observability Gramians. The target initializes the
entire protocol; it need not be minimal for one isolated response.

\paragraph{Repeated responses and moving-boundary stability}
A response persists until the opponent changes its plan, so genuine
revisions alternate even when opportunities do not
(\cref{lem:flow-property}). At a fixed continuation boundary, the
mutual-response fixed point is exactly the MFG equilibrium restarted
from that boundary state (\cref{cor:global-implies-response-block}).
The actual asynchronous process is more difficult because both the
physical boundary state and the continuation domain change between
revisions. We compare the plans by extending them by zero to a common
Hilbert space. Suppose genuine revisions accumulate at a pre-terminal
time \(\bar t<T\), with opportunities locally finite before \(\bar t\).
Let \(p\) be the first revising population and \(q=3-p\). The limiting
cycle on the continuation space at \(\bar t\) is
\[
    Q_Z
    :=
    \mathcal J_q^{\bar t}\mathcal J_p^{\bar t}.
\]
The main stability result, \cref{thm:zeno-strategy-completion}, shows that
the sufficient condition
\[
    r(Q_Z)<1
\]
first bounds the responses strongly enough to control physical execution
and obtain a finite state left limit \(X_{\bar t-}\). With this state
limit established, the zero-extended plans converge strongly to the MFG
equilibrium restarted from \(X_{\bar t-}\). The limit is determined by
the state actually reached, which need not lie on the original
complete-information equilibrium trajectory.

Physical execution uses only plan segments before \(\bar t\), whereas
plan convergence concerns the entire unexecuted tails. The scalar model
in \cref{prop:zeno-strategy-counterexample} satisfies both continuation
well-posedness conditions, and every scheduled update is genuine.
Nevertheless, square-integrable execution and a finite state left limit
coexist with divergent continuation-plan norms. Thus well-posed
continuation problems and regular physical execution do not by
themselves ensure that repeated asynchronous replanning converges to
the restarted MFG equilibrium. The counterexample separates
physical-state regularity from stability of the full continuation plans.

\paragraph{Implementation and finite-population robustness}
The stability analysis uses an ideal process with the state and opponent
plan available at each response. Starting from the recovered state--plan
pair, each population maintains these inputs by causal state propagation
and opponent-plan regeneration after announced revisions. Using only
local aggregate observations, its own plan, and the public revision
record, this procedure reproduces the ideal process exactly on every
finite chronological opportunity prefix
(\cref{thm:mean-field-exact-implementation}).

Empirical aggregate fluctuations perturb the reconstructed states and
plans and can change a pass into a revision, or conversely. Replacing
exact equality in the finite-population pass test by a suitably vanishing
tolerance addresses this discrete sensitivity.
\Cref{thm:finite-prefix-robustness} gives revision-record agreement with
probability tending to one on every fixed finite chronological opportunity
prefix, and an \(O(N_*^{-1})\) bound on the mean-square implementation
error weighted by the matching-event indicator. Here \(N_*\) is the
smaller population size. The estimate is not uniform through Zeno
accumulation and makes no \(\varepsilon\)-Nash claim for the revised
finite-player process.

\paragraph{Related work}
Within the dynamic-game and several-population MFG setting
\cite{basar-olsder-1999,lasry-lions-2007,huang-caines-malhame-2007,carmona-delarue-2018,bensoussan-huang-lauriere-2018},
the information question depends on what is hidden and what must be
recovered. Related restrictions arise in partial-observation models
\cite{sen-caines-2019,bensoussan-feng-huang-2021}, incomplete-information
models \cite{bertucci-2022,li-nie-wang-yan-2024}, games with differing
beliefs \cite{casgrain-jaimungal-2020}, robust mean field teams
\cite{feng-huang-jia-2025}, and models without rational expectations
\cite{moll-ryzhik-2026}. These works study different sources of
uncertainty about the state or game environment. Here an exact local
aggregate path must determine a prescribed state--plan target, connecting
the problem to functional observability
\cite{darouach-2000,fernando-trinh-jennings-2010,montanari-duan-aguirre-motter-2022},
bounded operator factorization \cite{douglas-1966}, and classical
observability and identification
\cite{kailath-1980,ljung-1999}. Our earlier paper
\cite{jin-ren-yao-zhang-2026} treats heterogeneous erroneous initial
information, its propagation and Gramian recovery, one-shot revision,
and finite-population approximation of the resulting aggregate.
Repeated replanning adds moving continuation domains, interacting
responses, and propagation of empirical errors into later decisions.

Repeated adjustment has been studied through learning in games
\cite{fudenberg-levine-1998}, asynchronous best replies
\cite{nisan-schapira-zohar-2008}, and asynchronous iteration
\cite{bertsekas-tsitsiklis-1989}. In MFGs, related mechanisms include
fictitious play \cite{cardaliaguet-hadikhanloo-2017},
quasi-stationary and myopic adjustment
\cite{mouzouni-2020,neumann-2024}, and mean field model predictive
control \cite{degond-herty-liu-2017}. The present response starts from
the physical state actually reached, freezes the opponent's current
time-dependent plan, and extends to the original terminal time. These
features distinguish it from updates against history averages or current
population distributions and from short receding-horizon control.

Local implementation raises separate questions about communication and
event records. Communication-constrained MFGs
\cite{aggarwal-zaman-basar-2022}, hybrid MFGs with switching and
stopping \cite{firoozi-pakniyat-caines-2022}, and asynchronous mean field
reinforcement learning \cite{yang-2026} address other forms of
communication, mode switching, or asynchronous learning. Event-triggered
equilibrium seeking \cite{rodrigues-oliveira-krstic-basar-2026} typically
excludes Zeno behavior through a positive dwell time. Our opportunity
sequences are exogenous and may accumulate, which instead leads to the
hybrid-systems question of whether the physical execution and the full
continuation plans admit limits
\cite{liberzon-2003,goebel-sanfelice-teel-2012}.

\paragraph{Organization}
\Cref{sec:model} introduces the model and response maps;
\cref{sec:response-information} characterizes target recovery.
\Cref{sec:recovery} develops the response dynamics, spectral stability,
and counterexample. \Cref{sec:async,sec:finite-population-robustness}
treat local implementation and finite-population robustness, respectively.
The appendices contain the supporting LQ derivations and moving-boundary
estimates.

\section{Model and continuation response maps}\label{sec:model}

Separate global and frozen-opponent boundary systems define the initial
plans and later affine responses, respectively.

\subsection{Baseline two-population LQ MFG}

For populations \(\mathcal P_m\), \(m=1,2\), let
\(H_m=\Pi_m:\mathbb R^{2d}\to\mathbb R^d\) project onto the observed
aggregate block \(z^m\). Given deterministic
\(X\in H^1([0,T];\mathbb R^{2d})\) and
\(\bar u^m\in L^2([0,T];\mathbb R^{d_0})\), a representative state satisfies
\begin{equation}\label{eq:model-state}
dx^m_t=
\bigl(A_mx^m_t+B_mu_m(t)+C_mX_t+F_m\bar u^m_t\bigr)dt
+D_m\,dW^m_t,
\end{equation}
where \(X_t=((z_t^1)^\top,(z_t^2)^\top)^\top\) and
\(C_m=(C_m^1,C_m^2)\).
The admissible class is the deterministic open-loop space
\begin{equation}\label{eq:deterministic-admissible-class}
\mathcal U_m^{\mathrm{det}}:=L^2([0,T];\mathbb R^{d_0}).
\end{equation}
For \(u_m\in\mathcal U_m^{\mathrm{det}}\), the representative cost is
\begin{equation}\label{eq:model-cost}
J_m^{\mathrm{MF}}(u_m;X,\bar u^m)
=\frac12\mathbb E\left[
\int_0^T f_m(t,x^m_t,u_m(t),X_t)\,dt
+G_m(x^m_T,X_T)
\right],
\end{equation}
where
\begin{align*}
f_m(t,x,u,X)
&=\|x-s_m\|_{Q_{Im}}^2+\|u\|_{R_m}^2
+\sum_{n=1}^2\|x-\psi_m^n(X)\|_{Q_m^n}^2,\\
G_m(x,X)
&=\|x-\bar s_m\|_{\bar Q_{Im}}^2
+\sum_{n=1}^2\|x-\bar\psi_m^n(X)\|_{\bar Q_m^n}^2,
\end{align*}
Here \(\|v\|_Q^2:=v^\top Qv\),
\(\psi_m^n(X)=\Gamma_m^nz^n+\eta_m^n\), and
\(\bar\psi_m^n(X)=\bar\Gamma_m^nz^n+\bar\eta_m^n\).
The matrices \(A_m,C_m^n,Q_{Im},Q_m^n,\bar Q_{Im},\bar Q_m^n\) and the target
matrices are \(d\times d\), \(B_m,F_m\in\mathbb R^{d\times d_0}\),
\(R_m\in\mathbb R^{d_0\times d_0}\), and
\(D_m\in\mathbb R^{d\times d_w}\).  All coefficients are deterministic and
constant.

\begin{assumption}[Basic LQ data]\label{ass:lq}
For each \(m\), all state-cost matrices are symmetric nonnegative definite,
\(R_m\) is symmetric positive definite, and the representative initial state
is square integrable and independent of \(W^m\).
\end{assumption}

\begin{definition}[Deterministic mean-field equilibrium]
\label{def:deterministic-mfe}
A deterministic mean-field equilibrium with initial aggregate mean \(X_0\) is
a tuple \((X,u_1,u_2)\) such that each \(u_m\) is the unique optimizer of
\cref{eq:model-state,eq:model-cost} when \(X\) and \(\bar u^m=u_m\) are
treated as exogenous, and
\(X=((\mathbb E x^1)^\top,(\mathbb E x^2)^\top)^\top\) is generated by the
two representative dynamics.  The solution at the true initial aggregate
mean is the \emph{complete-information equilibrium}.
\end{definition}

\begin{assumption}[Global no-conjugate condition]
\label{ass:eq-wellposed}
With the stacked coefficient blocks defined in \cref{app:baseline-lq}, let \(U,V:[0,T]\to\mathbb R^{2d\times2d}\) solve
\begin{equation}\label{eq:global-boundary-lift}
\frac d{dt}\begin{pmatrix}U\\V\end{pmatrix}
=
\begin{pmatrix}
\mathcal A+\mathcal C&-(\mathcal B+\mathcal F)\mathcal R^{-1}\mathcal B^\top\\
-\mathcal Q&-\mathcal A^\top
\end{pmatrix}
\begin{pmatrix}U\\V\end{pmatrix},
\qquad
\begin{pmatrix}U(T)\\V(T)\end{pmatrix}
=\begin{pmatrix}I_{2d}\\\bar{\mathcal Q}\end{pmatrix}.
\end{equation}
Assume \(\det U(t)\ne0\) for every \(t\in[0,T]\).
\end{assumption}

\subsection{Initial plans and local observations}\label{sec:belief}
Under \cref{ass:lq,ass:eq-wellposed}, the equilibrium has the affine
solution map \(\mathcal S:b\mapsto(X[b],Y[b],u_1[b],u_2[b])\) derived in
\cref{app:baseline-lq}. At the true mean \(X_0\), write
\(\mathcal S(X_0)=(X^c,Y^c,u_1^c,u_2^c)\).
Population \(m\) knows its own mean \(z_0^m\), uses a possibly incorrect
opponent mean, and generates its initial plan from
\begin{equation}\label{eq:plan-generating-belief}
b_0^m=X_0+\Pi_n^\top\varepsilon_{m\to n},\qquad
u_m^0=u_m[b_0^m],\qquad n=3-m.
\end{equation}
The actual state is generated by these two plans:
\begin{equation}\label{eq:misspecified-realized-state}
\dot X=\mathsf AX+\mathsf B_1u_1^{\rm act}+\mathsf B_2u_2^{\rm act},\qquad
X(0)=X_0,\quad
\mathsf A:=\mathcal A+\mathcal C,\quad
\mathsf B_m:=\Pi_m^\top(B_m+F_m).
\end{equation}
Initially \(u_m^{\rm act}=u_m^0\). Execution uses the active plan's
current-time value until replacement. All plans have continuous
representatives from the LQ boundary systems.

Fix \(0<t_0<T\) and assume no revision on \([0,t_0]\). Population \(m\)
knows neither \(b_0^n\) nor \(u_n^0\). The affine generator gives
\begin{equation}\label{eq:opponent-plan-generator}
u_n^0=S_nb_0^n+s_n^{\rm pl},\qquad
S_n:\mathbb R^{2d}\longrightarrow L^2(0,T;\mathbb R^{d_0})
\end{equation}
with bounded \(S_n\) and known offset \(s_n^{\rm pl}\). Write \(\Phi^0(t,s)\) for the
transition matrix of \(\mathsf A\). Variation of constants gives
\begin{equation}\label{eq:opponent-belief-representation}
X_t=X_t^{m,\mathrm{base}}+M_m(t)b_0^n,\qquad 0\le t\le t_0,
\end{equation}
where
\begin{equation}\label{eq:base-trajectory-map}
\begin{aligned}
X_t^{m,\mathrm{base}}
&=\Phi^0(t,0)\Pi_m^\top z_0^m
+\int_0^t\Phi^0(t,s)\bigl(\mathsf B_mu_m^0(s)+\mathsf B_ns_n^{\rm pl}(s)\bigr)\,ds,\\
M_m(t)h
&=\Phi^0(t,0)\Pi_n^\top\Pi_nh
+\int_0^t\Phi^0(t,s)\mathsf B_n(S_nh)(s)\,ds.
\end{aligned}
\end{equation}
With population \(m\)'s own mean and plan fixed, the unknown coefficient
\(b_0^n\in\mathbb R^{2d}\) is the hidden input. It is the opponent's
plan-generating initial belief, not the physical state \(X_0\).
Population \(m\) observes only its local aggregate path \(z^m=H_mX\).
Subtracting the known base path gives the observation operator and its
full hidden-input Gramian:
\begin{equation}\label{eq:integral-output-operator}
\begin{aligned}
(\mathcal O_mh)(t)&=H_mM_m(t)h,\qquad
\mathcal O_m:\mathbb R^{2d}\longrightarrow L^2(0,t_0;\mathbb R^d),\\
\mathcal W_m(t_0)&=\mathcal O_m^*\mathcal O_m.
\end{aligned}
\end{equation}
The residual is \(y_m=z^m-H_mX^{m,\mathrm{base}}=\mathcal O_mb_0^n\).
Its role in recovery depends on the inputs required by the response map.

\subsection{Population-wise continuation response maps}\label{sec:response}
Let the continuation plan space be \(\mathbb L(\tau)=L^2([\tau,T];\mathbb R^{d_0})\), with
\(\operatorname{Res}_{b,a}:\mathbb L(a)\to\mathbb L(b)\) denoting restriction
for \(a\le b\). For a boundary state \(x\in\mathbb R^{2d}\) in global order
\((z^1,z^2)\) and a frozen opponent plan \(v\in\mathbb L(\tau)\), define
\(\mathfrak R_m(\tau,x,v)\) by representative-agent optimality followed by
population-\(m\) consistency on \([\tau,T]\). Future revisions are not anticipated in this continuation problem.
The local boundary system is \cref{eq:frozen-fbs}; its rectangular Riccati
chart is independent of the simultaneous equilibrium chart. The
representative agent's stationarity condition involves \(B_m^\top\).
After population consistency is imposed, the population control enters
the aggregate dynamics through \(B_m+F_m\).
The local chart uses actor-first coordinates \((z^m,z^n)\), with
\(n=3-m\). We write \(\mathfrak P_m\) for the permutation from global
to actor-first order, made explicit in
\cref{eq:global-frozen-response-definition} below.
\begin{assumption}[Local frozen-response no-conjugate condition]
\label{ass:response-wellposed}
For \(m=1,2\), let \(U_m\) be the state block of the local graph lift
\cref{eq:local-frozen-lift}.  Assume
\begin{equation}\label{eq:local-frozen-no-conjugate}
\det U_m(t)\ne0,
\qquad t\in[t_0,T].
\end{equation}
By \cref{prop:frozen-no-conjugate}, this condition is equivalent to unique
solvability of every local frozen-response continuation problem.
\end{assumption}

\begin{proposition}[Affine continuation response]
\label{thm:frozen-response}
Assume \cref{ass:lq} and \cref{ass:response-wellposed}.  The response is uniquely defined
and has the affine representation
\begin{equation}\label{eq:affine-response}
\mathfrak R_m(\tau,x,v)
=K_m^\tau x+\mathcal J_m^\tau v+h_m^\tau,
\end{equation}
where
\[
K_m^\tau\in\mathcal L\bigl(
\R^{2d},L^2([\tau,T];\R^{d_0})\bigr),
\qquad
\mathcal J_m^\tau\in\mathcal L\bigl(
L^2([\tau,T];\R^{d_0}),L^2([\tau,T];\R^{d_0})\bigr).
\]
More precisely, if \(\widetilde K_m^\tau\) denotes the locally ordered gain
in \cref{eq:explicit-response-gains}, then
\begin{equation}\label{eq:global-local-response-gain}
K_m^\tau=\widetilde K_m^\tau\mathfrak P_m.
\end{equation}
Moreover,
\begin{equation}\label{eq:uniform-local-response-gains}
\sup_{m=1,2}\sup_{\tau\in[t_0,T]}
\left(
\|K_m^\tau\|+\|\mathcal J_m^\tau\|
+\|h_m^\tau\|_{L^2([\tau,T])}
\right)<\infty.
\end{equation}
The formulas and the proof are given in
\cref{app:local-frozen-details}.
\end{proposition}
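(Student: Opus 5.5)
The plan is to write down the frozen-opponent continuation problem on \([\tau,T]\) explicitly, decouple it with a Riccati-type transformation, and read off the affine structure and the uniform bounds. Fix \(m\), set \(n=3-m\), and work in actor-first coordinates \(\tilde x=\mathfrak P_m x=(z^m,z^n)\). With the opponent plan \(v\in\mathbb L(\tau)\) frozen, the representative agent of population \(m\) solves a standard LQ problem. Its Pontryagin stationarity condition is \(u_m=-R_m^{-1}B_m^\top p\), where \(p\) is the mean adjoint; expectations remove the noise \(D_m\,dW^m\). Imposing population-\(m\) consistency (\(\bar u^m=u_m\), \(z^m=\mathbb E x^m\)) gives a linear forward--backward boundary system, \cref{eq:frozen-fbs}, with these properties:
\begin{itemize}
\item the state \(\tilde x\) starts at \(\tilde x(\tau)=\mathfrak P_m x\);
\item the population control enters the aggregate dynamics through \(B_m+F_m\), and \(v\) enters the \(z^n\)-equation through \(B_n+F_n\);
\item the adjoint has an affine terminal condition at \(T\) coming from \(G_m\).
\end{itemize}
Every datum enters the system affinely: \(x\) through the initial condition, \(v\) as a forcing term, and the constant targets \(s_m,\eta_m^n,\bar s_m,\dots\) as an inhomogeneity. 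So once unique solvability is established, the response \(\mathfrak R_m(\tau,x,v)=-R_m^{-1}B_m^\top p\) is automatically affine in \((x,v)\). Its constant part is \(h_m^\tau\), the response at \((x,v)=(0,0)\).

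Solvability and the explicit formulas come from the decoupling. Using the graph lift \cref{eq:local-frozen-lift} with state block \(U_m\), set \(P_m(t)=V_m(t)U_m(t)^{-1}\). By \cref{ass:response-wellposed}, \(P_m\) is a well-defined, continuous, rectangular (non-symmetric) Riccati solution on \([t_0,T]\). Substituting the ansatz \(p=P_m\tilde x+\phi\) into the system gives three pieces:
\begin{itemize}
\item a linear backward ODE for \(\phi\), driven by \(v\) and the constants, with zero or affine terminal data;
\item a closed-loop forward ODE for \(\tilde x\);
\item the response gains of \cref{eq:explicit-response-gains}, which follow by variation of constants.
\end{itemize}
Concretely, \(\widetilde K_m^\tau\) is \(-R_m^{-1}B_m^\top P_m(t)\Psi_m(t,\tau)\), where \(\Psi_m\) is the closed-loop transition matrix. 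The operator \(\mathcal J_m^\tau\) is a Volterra-type integral operator whose continuous kernel is built from \(\Psi_m\), \(P_m\), and the backward transition matrix. Identity \cref{eq:global-local-response-gain} is then just the composition with \(\mathfrak P_m\). For uniqueness, the difference of two solutions solves the homogeneous system. The decoupling forces this difference to zero, since \(\phi\) is identically zero and \(\tilde x(\tau)=0\). The converse equivalence is the content of \cref{prop:frozen-no-conjugate}, which we invoke.

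For the uniform bound \cref{eq:uniform-local-response-gains}, note that \(P_m\), \(\Psi_m\), and the backward transition matrices are continuous on the compact sets \([t_0,T]\) and \(\{t_0\le s\le t\le T\}\), so they are bounded by a constant \(C\) independent of \(\tau\). This gives three bounds:
\begin{itemize}
\item \(\|K_m^\tau\|\le C\sqrt{T}\);
\item \(\|\mathcal J_m^\tau\|\le C T\), by the Hilbert--Schmidt or Schur bound for a bounded kernel on \([\tau,T]^2\subset[t_0,T]^2\);
\item \(\|h_m^\tau\|\le C\sqrt T\), with \(C\) depending on the constant data.
\end{itemize}
Taking the maximum over \(m=1,2\) finishes the proof.

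The main obstacle is that the frozen system is not a symmetric optimal-control Hamiltonian, because the consistency coupling makes it a genuine fixed point. The Riccati equation for \(P_m\) is therefore non-symmetric and cannot be shown to be globally solvable by convexity. The whole argument has to rest on the no-conjugate condition \(\det U_m\neq0\), which has to be checked carefully to give both existence of \(P_m\) on all of \([t_0,T]\) and uniqueness of the boundary-value solution for every \(\tau\ge t_0\). The same time-independent lift serves every \(\tau\), which is what makes the bounds uniform.
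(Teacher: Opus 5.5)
Your proposal is correct and follows essentially the same route as the paper: the frozen boundary system \cref{eq:frozen-fbs}, the \(\tau\)-independent rectangular chart \(\Pi_m^{\mathrm{fr}}=V_mU_m^{-1}\) with the decoupling \(y^m=\Pi_m^{\mathrm{fr}}X^{[m]}+g_m^v\), variation of constants for \(\widetilde K_m^\tau,\mathcal J_m^\tau,h_m^\tau\), and uniform bounds from compactness of \([t_0,T]\). The paper gets the uniform bounds from the sup-norm estimates of \cref{lem:response-uniform-w1infty} together with \(\|f\|_2\le\sqrt{T-t_0}\,\|f\|_\infty\), rather than from a direct kernel bound; also, the kernel of \(\mathcal J_m^\tau\) is not of Volterra type, since the adjoint part integrates up to \(T\), but your Hilbert--Schmidt estimate does not use that property.
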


In particular,
\begin{equation}\label{eq:exact-response-increment}
\mathfrak R_m(\tau,x_1,v_1)-\mathfrak R_m(\tau,x_2,v_2)
=K_m^\tau(x_1-x_2)+\mathcal J_m^\tau(v_1-v_2).
\end{equation}
The local and global coordinate conventions are
\begin{equation}\label{eq:global-frozen-response-definition}
\mathfrak R_m(\tau,x,v)=\widetilde{\mathfrak R}_m(\tau,\mathfrak P_mx,v),
\qquad \mathfrak P_1=I,\quad
\mathfrak P_2=\begin{pmatrix}0&I_d\\I_d&0\end{pmatrix}.
\end{equation}
Each \(\mathcal J_m^\tau\) is compact, with
\(\|\mathcal J_m^\tau\|\le C(T-\tau)\), by
\cref{prop:response-operator-compact}. Bounds denoted by \(C\) depend only
on the fixed coefficients and finite horizon unless a prefix dependence is
displayed. Neither no-conjugate condition asserts stability of repeated
responses.

\section{Response-relevant information}\label{sec:response-information}

The response maps in \cref{sec:model} require a boundary state and an
opponent plan, which are not directly observed. We characterize when
the initial residual \(y_m=\mathcal O_mb_0^n\) determines this state--plan
pair, and exhibit recoverability with a singular full Gramian.

\subsection{Response-relevant target and factorization}
Fix population \(m\) and let \(n=3-m\). The hidden-input space, observation
space, and protocol-target space are, respectively,
\[
\mathcal H_m:=\mathbb R^{2d},\qquad
\mathcal Y_m:=L^2(0,t_0;\mathbb R^d),\qquad
\mathsf Z_m^{\rm prot}:=\mathbb R^{2d}\times\mathbb L(t_0).
\]
Use the Euclidean norm on \(\mathcal H_m\), the \(L^2\) norm on
\(\mathcal Y_m\), and the product norm
\[
\|(x,w)\|_{\mathsf Z_m^{\rm prot}}^2
:=|x|^2+\|w\|_{\mathbb L(t_0)}^2.
\]
The actual hidden input is \(b_0^n\in\mathcal H_m\); a generic input
\(h\in\mathcal H_m\) produces the residual observation
\(\mathcal O_mh\in\mathcal Y_m\). Define the protocol target by
\begin{equation}\label{eq:protocol-initial-target}
T_m^{\rm prot}:\mathcal H_m\longrightarrow\mathsf Z_m^{\rm prot},\qquad
T_m^{\rm prot}h:=\bigl(M_m(t_0)h,\operatorname{Res}_{t_0,0}S_nh\bigr).
\end{equation}
The unknown state--plan pair is therefore
\begin{equation}\label{eq:protocol-affine-pair}
\bigl(X_{t_0},u_n^0|_{[t_0,T]}\bigr)
=\bigl(X_{t_0}^{m,\mathrm{base}},s_n^{\rm pl}|_{[t_0,T]}\bigr)+T_m^{\rm prot}b_0^n.
\end{equation}
Two inputs \(h_1,h_2\in\mathcal H_m\) are \emph{observationally
indistinguishable} when \(\mathcal O_mh_1=\mathcal O_mh_2\), equivalently
\(h_1-h_2\in\ker\mathcal O_m\). For this target, they are
\emph{response-relevant equivalent} when
\begin{equation}\label{eq:response-relevant-equivalence}
h_1\sim_{\rm resp}h_2\quad\Longleftrightarrow\quad
T_m^{\rm prot}h_1=T_m^{\rm prot}h_2.
\end{equation}
Equivalently, their difference lies in \(\ker T_m^{\rm prot}\).
A \emph{bounded protocol initializer} is a bounded linear map
\(K:\mathcal Y_m\to\mathsf Z_m^{\rm prot}\) satisfying
\(T_m^{\rm prot}=K\mathcal O_m\); it recovers the target from the
residual, and adding the known offset in \cref{eq:protocol-affine-pair}
recovers the state--plan pair.

Full observability \(\mathcal W_m(t_0)\succ0\) is sufficient for target
recovery; the exact criterion only requires indistinguishable inputs to
have the same target.

\begin{theorem}[Response-relevant initialization]\label{thm:protocol-sufficient-observation}
Fix population \(m\). Assume \cref{ass:lq,ass:eq-wellposed} and the representation in
\cref{eq:opponent-belief-representation}. The local observation determines
the state--plan pair in \cref{eq:protocol-affine-pair} for all
\(h\in\mathcal H_m\), equivalently a bounded protocol initializer exists,
if and only if
\begin{equation}\label{eq:protocol-kernel-condition}
\ker\mathcal O_m\subseteq\ker T_m^{\rm prot}.
\end{equation}
Equivalently, observational indistinguishability implies
response-relevant equivalence. Under this condition the canonical bounded
initializer is
\begin{equation}\label{eq:protocol-canonical-factor}
\mathcal K_m^{\rm prot}=T_m^{\rm prot}\mathcal O_m^\dagger,
\qquad T_m^{\rm prot}=\mathcal K_m^{\rm prot}\mathcal O_m,
\end{equation}
where \(\mathcal O_m^\dagger:\mathcal Y_m\to\mathcal H_m\) is the
Moore--Penrose inverse. Its norm is
the smallest observation-to-target amplification among bounded factors:
\begin{equation}\label{eq:protocol-conditioning}
\kappa_m^{\rm prot}:=\|\mathcal K_m^{\rm prot}\|
=\sup_{\mathcal O_mh\ne0}
\frac{\|T_m^{\rm prot}h\|_{\mathsf Z_m^{\rm prot}}}
{\|\mathcal O_mh\|_{\mathcal Y_m}},
\qquad\sup\varnothing:=0.
\end{equation}
It measures the worst-case amplification of an observation error into
the recovered protocol target. For the true residual
\(y_m=\mathcal O_mb_0^n\) and an available residual \(y_m+e\),
with \(e\in\mathcal Y_m\), the target error satisfies
\[
\|\mathcal K_m^{\rm prot}(y_m+e)-T_m^{\rm prot}b_0^n
\|_{\mathsf Z_m^{\rm prot}}
\le\kappa_m^{\rm prot}\|e\|_{\mathcal Y_m}.
\]
\end{theorem}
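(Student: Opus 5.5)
The argument is finite-dimensional linear algebra on the domain side. Since \(\mathcal H_m=\mathbb R^{2d}\) is finite dimensional, the range of \(\mathcal O_m\) is a closed, finite-dimensional subspace of \(\mathcal Y_m\). The Moore--Penrose inverse \(\mathcal O_m^\dagger\) is therefore an everywhere-defined bounded operator. It vanishes on \((\operatorname{ran}\mathcal O_m)^\perp\) and satisfies \(\mathcal O_m^\dagger\mathcal O_m=P_{(\ker\mathcal O_m)^\perp}\), the orthogonal projector in \(\mathcal H_m\). I also record that \(T_m^{\rm prot}\) is bounded: \(M_m(t_0)\) is a matrix, and \(\operatorname{Res}_{t_0,0}S_n\) is bounded by the boundedness of \(S_n\) in \cref{eq:opponent-plan-generator}, which follows from \cref{ass:eq-wellposed} via the affine solution map \(\mathcal S\).

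\textbf{Equivalences.} First, suppose a bounded initializer \(K\) exists, or more generally that the observation determines the pair for all \(h\), meaning \(\mathcal O_mh_1=\mathcal O_mh_2\) forces \(T_m^{\rm prot}h_1=T_m^{\rm prot}h_2\). Taking \(h\in\ker\mathcal O_m\) and \(h_2=0\) gives \(T_m^{\rm prot}h=0\), which is \cref{eq:protocol-kernel-condition}. Conversely, assume \cref{eq:protocol-kernel-condition}. For any \(h\), write \(h=Ph+(I-P)h\) with \(P=\mathcal O_m^\dagger\mathcal O_m\). The component \((I-P)h\) lies in \(\ker\mathcal O_m\subseteq\ker T_m^{\rm prot}\), so
\[
T_m^{\rm prot}\mathcal O_m^\dagger\mathcal O_mh=T_m^{\rm prot}h .
\]
Hence \(\mathcal K_m^{\rm prot}:=T_m^{\rm prot}\mathcal O_m^\dagger\) is a bounded factor, being a composition of bounded maps. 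Adding the known offset in \cref{eq:protocol-affine-pair} then recovers the state--plan pair. The restatement in terms of indistinguishability and \(\sim_{\rm resp}\) is just the kernel condition applied to \(h_1-h_2\).

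\textbf{Norm formula.} For \(y\in\mathcal Y_m\), decompose \(y=\mathcal O_mh_0+y^\perp\), where \(h_0=\mathcal O_m^\dagger y\in(\ker\mathcal O_m)^\perp\) and \(y^\perp\perp\operatorname{ran}\mathcal O_m\). Then \(\mathcal K_m^{\rm prot}y=T_m^{\rm prot}h_0\), while
\[
\|y\|^2=\|\mathcal O_mh_0\|^2+\|y^\perp\|^2\ge\|\mathcal O_mh_0\|^2 .
\]
This gives \(\|\mathcal K_m^{\rm prot}y\|/\|y\|\le\|T_m^{\rm prot}h_0\|/\|\mathcal O_mh_0\|\) whenever \(\mathcal O_mh_0\ne0\). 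If instead \(\mathcal O_mh_0=0\), then \(h_0=0\) and \(\mathcal K_m^{\rm prot}y=0\). So \(\|\mathcal K_m^{\rm prot}\|\) is at most the supremum in \cref{eq:protocol-conditioning}. For the reverse inequality, any \(h\) with \(\mathcal O_mh\ne0\) satisfies \(T_m^{\rm prot}h=\mathcal K_m^{\rm prot}\mathcal O_mh\). Hence the ratio \(\|T_m^{\rm prot}h\|/\|\mathcal O_mh\|\) is at most \(\|\mathcal K_m^{\rm prot}\|\), which proves equality; the convention \(\sup\varnothing=0\) covers \(\mathcal O_m=0\).

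\textbf{Minimality and error bound.} Any bounded factor \(K\) satisfies the same lower bound \(\|K\|\ge\|T_m^{\rm prot}h\|/\|\mathcal O_mh\|\). This is exactly the claim that \(\kappa_m^{\rm prot}\) is the smallest amplification among all bounded factors. The error estimate follows by linearity:
\[
\mathcal K_m^{\rm prot}(y_m+e)-T_m^{\rm prot}b_0^n=\mathcal K_m^{\rm prot}e .
\]
No step is a genuine obstacle. The only points needing care are that the Moore--Penrose inverse is bounded and everywhere defined, which holds here because the range of \(\mathcal O_m\) is finite dimensional, and the orthogonal splitting used in the norm identity.
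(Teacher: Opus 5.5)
Your proof is correct and follows essentially the same route as the paper. Both arguments derive the kernel inclusion from a factorization, obtain the bounded factor \(T_m^{\rm prot}\mathcal O_m^\dagger\) from finite dimensionality of \(\operatorname{Ran}\mathcal O_m\), get the norm identity and minimality from agreement on the range, and get the error bound by linearity. The differences are presentational: you verify \(T_m^{\rm prot}\mathcal O_m^\dagger\mathcal O_m=T_m^{\rm prot}\) through the projector \(\mathcal O_m^\dagger\mathcal O_m\) and spell out the orthogonal splitting for the norm, whereas the paper defines the factor on the range and extends it by zero.
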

\begin{proof}
A factorization \(T_m^{\rm prot}=K\mathcal O_m\) implies the kernel
inclusion. Conversely, that inclusion makes
\(\widetilde K(\mathcal O_mh)=T_m^{\rm prot}h\) well defined on
\(\operatorname{Ran}\mathcal O_m\). This range is finite dimensional,
hence closed, and \(\widetilde K\) is bounded. Its extension by zero on
the orthogonal complement is \(T_m^{\rm prot}\mathcal O_m^\dagger\).
The extension has the norm in \cref{eq:protocol-conditioning}; every
initializer agrees with \(\widetilde K\) on the range and therefore has
at least this norm. If \(\mathcal O_m=0\), the kernel inclusion forces
\(T_m^{\rm prot}=0\), so the canonical factor has norm zero. Finally,
the target perturbation is \(\mathcal K_m^{\rm prot}e\), giving the bound.
\end{proof}

This is
functional observability of a prescribed target
\cite{darouach-2000,fernando-trinh-jennings-2010,montanari-duan-aguirre-motter-2022};
the factorization principle is classical \cite{douglas-1966}. The choice of
target identifies the information needed by continuation replanning.

The target \(T_m^{\rm prot}\) initializes the entire continuation
protocol. It need not be minimal for one isolated response, which may
depend only on a lower-dimensional image of this target.

\subsection{Recoverability without full observability}
The distinction between target recovery and full observability in
\cref{thm:protocol-sufficient-observation} occurs even within the present
LQ model, as the following example shows.

\begin{example}[Recoverable state--plan target with singular full Gramian]
\label{prop:singular-gramian-response-example}
Let \(d=d_0=2\), write \(z^m=(a_m,b_m)\), and take any \(0<t_0<T\).
Choose
\begin{equation}\label{eq:singular-example-coefficients}
A_m=F_m=C_m^m=0,\qquad C_m^n=I_2,\qquad
B_m=\operatorname{diag}(1,0),\qquad R_m=I_2,
\end{equation}
where \(n=3-m\). All state-cost weights vanish except
\(\bar Q_{Im}=\operatorname{diag}(1,0)\), with terminal target
\(\bar s_m=0\).

The active dynamics are \(\dot a_1=a_2+u_{1,a}\),
\(\dot a_2=a_1+u_{2,a}\), and the passive dynamics are
\(\dot b_1=b_2\), \(\dot b_2=b_1\). Passive controls vanish, so passive
hidden coordinates have no effect on either plan.
Put \(C_a=\begin{psmallmatrix}0&1\\1&0\end{psmallmatrix}\).
Active adjoints are constant with terminal value \((a_1(T),a_2(T))\).
For \(H=T-\tau\), the active block of the global graph lift is
\[
U_a(H)=\begin{pmatrix}e^H&1-e^H\\1-e^H&e^H\end{pmatrix}.
\]
Its eigenvalues are \(1\) and \(2e^H-1>0\); the corresponding active
local graph block has determinant \(1+\sinh H>0\). Both passive blocks
are \(e^{-C_aH}\), hence invertible. Thus the global and local
no-conjugate conditions hold.

Set \(\alpha=(e^T-1)/(2e^T-1)>0\) and
\(\beta=e^T/(2e^T-1)\). Inverting \(U_a(T)\) shows that population 2's
constant active-plan perturbation is
\(w_2=-\alpha h_{a_1}-\beta h_{a_2}\). For observer 1, variation of constants gives
the two components of its residual:
\[
(\mathcal O_1h)_a(t)=\sinh(t)h_{a_2}+(\cosh(t)-1)w_2,
\qquad
(\mathcal O_1h)_b(t)=\sinh(t)h_{b_2}.
\]
If the residual vanishes in \(L^2(0,t_0)\), these analytic functions
vanish identically. Their derivatives at zero give \(h_{a_2}=h_{b_2}=0\),
and the second active derivative gives \(w_2=0\), hence \(h_{a_1}=0\).
The remaining coordinate \(h_{b_1}\) changes neither the actual opponent
initial-state contribution nor its plan, and is annihilated by
\(T_1^{\rm prot}\). Exchanging populations gives, in global coordinates,
\begin{equation}\label{eq:singular-gramian-response-example}
\begin{aligned}
\ker\mathcal O_1&=\operatorname{span}\{(0,1,0,0)^\top\}
\subseteq\ker T_1^{\rm prot},\\
\ker\mathcal O_2&=\operatorname{span}\{(0,0,0,1)^\top\}
\subseteq\ker T_2^{\rm prot}.
\end{aligned}
\end{equation}
The derivative calculation identifies the kernels; recovery uses the
bounded initializer of \cref{thm:protocol-sufficient-observation}.
Thus both full Gramians are singular, but both targets satisfy
\cref{eq:protocol-kernel-condition} and admit bounded initializers.
\end{example}

The recovered state--plan pair supplies the initial data for asynchronous
replanning in \cref{sec:recovery,sec:async}.

\section{Asynchronous continuation replanning and stability}\label{sec:recovery}

Recovering the initial state--plan pair does not determine the behavior
of repeated revisions. We define the ideal process, identify its
mutual-response fixed point, and establish spectral stability at a
moving-boundary accumulation. A counterexample separates state and
plan convergence.

Replanning begins at \(t_0\) from
\((X_{t_0},u_1^0|_{[t_0,T]},u_2^0|_{[t_0,T]})\), the seed of
\cref{sec:response-information}. The ideal recursion uses the actual
aggregate state and the opponent's active plan at each response.
\Cref{sec:async} reconstructs these inputs from local information by
initialization, causal state propagation, and opponent-plan regeneration.
The current configuration is
\[
\bigl(X_t,u_1^{\rm act}|_{[t,T]},u_2^{\rm act}|_{[t,T]}\bigr),\qquad t\ge t_0,
\]
with actual physical state \(X_t\) and active unexecuted plans; executed portions
are fixed. Let \(\mathcal G_m\subset(t_0,T)\) be population \(m\)'s finite
or countable opportunity sequence, \(n=3-m\).
\begin{assumption}[Opportunities and event delivery]\label{ass:event-grid}
The sequences \(\mathcal G_m\) are strictly increasing, disjoint, and
exogenous to states and candidate plans. Only the actor receives the
current signal; future opportunities are unrevealed. Genuine revisions
are announced immediately to both populations by time and actor. \(\mathcal G_m \bigcap \mathcal G_n=\emptyset\).
\end{assumption}
At \(\tau\in\mathcal G_m\), write \(-/+\) for pre/post-opportunity
values. With equality in \(\mathbb L(\tau)\), the asynchronous update is
\[
u_m^{\rm cand}:=\mathfrak R_m
\bigl(\tau,X_\tau,u_n^{\rm act,-}|_{[\tau,T]}\bigr),
\]
\[
\begin{aligned}
u_m^{\rm act,+}|_{[\tau,T]}&=
\begin{cases}
u_m^{\rm act,-}|_{[\tau,T]},&u_m^{\rm cand}=u_m^{\rm act,-}|_{[\tau,T]}\quad\text{(pass)},\\
u_m^{\rm cand},&\text{otherwise (genuine revision)},
\end{cases}\\
u_n^{\rm act,+}|_{[\tau,T]}&=u_n^{\rm act,-}|_{[\tau,T]},\qquad X_\tau^+=X_\tau^-.
\end{aligned}
\]
Between opportunities, plans stay fixed and \cref{eq:misspecified-realized-state} gives
\[
\dot X_t=\mathsf AX_t+\mathsf B_1u_1^{\rm act}(t)+\mathsf B_2u_2^{\rm act}(t).
\]
When genuine revisions occur, \((\sigma_j,m_j)\) enter the public record:
\begin{equation}\label{eq:event-only-record}
\mathsf{Rec}(t-)=\{(\sigma_j,m_j):\sigma_j<t\},\qquad
\mathsf{Rec}(t)=\{(\sigma_j,m_j):\sigma_j\le t\}.
\end{equation}
Here $\sigma_j$ is the $j$-th revision time, and $m_j$ is the actor. 
\subsection{Persistence and alternation of genuine revisions}
\label{subsec:ideal-benchmark}
The restriction identity below shows why a population that has responded
passes at every later own opportunity until its opponent changes its plan.

\begin{corollary}[Persistence and alternating revisions]\label{lem:flow-property}
Under \cref{ass:lq,ass:response-wellposed}, for $\tau\in\mathcal G_m$, if
\(u_m=\mathfrak R_m(\tau,X_\tau,u_n)\) and \(X\) is generated by
\((u_m,u_n)\), then
\begin{equation}\label{eq:flow-property}
u_m|_{[s,T]}=\mathfrak R_m(s,X_s,u_n|_{[s,T]}),\qquad \tau\le s\le T.
\end{equation}
For the ideal process under \cref{ass:event-grid}, genuine revisions
therefore alternate.
If both populations have a first opportunity and pass there, no event occurs.
Otherwise any first event occurs at its actor's first opportunity.
\end{corollary}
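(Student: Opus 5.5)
The plan is to prove the restriction identity \cref{eq:flow-property} first, via dynamic programming in the form of time consistency of the frozen-opponent continuation problem, and then read off the combinatorial statements about the ideal process.

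\textbf{Step 1: the restriction identity.} Fix \(\tau\in\mathcal G_m\) and \(s\in[\tau,T]\). By definition, \(u_m=\mathfrak R_m(\tau,X_\tau,u_n)\) is obtained in two stages on \([\tau,T]\), with \(u_n\) frozen:
\begin{itemize}
\item the representative agent's optimal open-loop control against an aggregate path \(X\) and mean control \(\bar u^m\);
\item population-\(m\) consistency, meaning \(\bar u^m=u_m\) and the \(z^m\)-block of \(X\) equal to the mean of the representative state.
\end{itemize}
Equivalently, by \cref{app:local-frozen-details}, \((X,Y,u_m)\) solves the local frozen forward--backward boundary system \cref{eq:frozen-fbs}. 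Its conditions are:
\begin{itemize}
\item forward aggregate dynamics driven by \(u_m\) and \(u_n\);
\item a backward adjoint with terminal condition at \(T\);
\item the stationarity relation expressing \(u_m\) through \(B_m^\top\) and the adjoint.
\end{itemize}
All of these are pointwise-in-time relations plus one terminal condition. Restricting the whole solution to \([s,T]\) therefore gives a solution of the same boundary system posed at \(s\), with initial state \(X_s\) (the state generated by \((u_m,u_n)\)) and frozen opponent plan \(u_n|_{[s,T]}\). By \cref{ass:response-wellposed} and \cref{prop:frozen-no-conjugate}, the continuation problem at \(s\) is uniquely solvable, since \(s\ge\tau>t_0\). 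Hence the restricted triple is the solution there, and its control component is \(\mathfrak R_m(s,X_s,u_n|_{[s,T]})\). This proves \cref{eq:flow-property}.

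I expect the main obstacle to be that the response is defined through a fixed point, namely optimality followed by consistency, rather than a single optimization. Bellman's principle cannot be invoked directly, so I would argue entirely through the characterizing boundary system and its uniqueness. The care needed is to check that the frozen-opponent boundary system's coefficients at boundary \(s\) are exactly those of the system at \(\tau\) restricted, with no \(\tau\)-dependent data. The only boundary data are the initial state and the terminal condition at \(T\), and both match. A secondary point is that the continuous representatives make \(X_s\) unambiguous.

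\textbf{Step 2: alternation.} Suppose population \(m\) makes a genuine revision at \(\sigma_j\), and let \(\tau'>\sigma_j\) be a later own opportunity of \(m\) such that no genuine revision of \(n\) occurs in \((\sigma_j,\tau')\). On \([\sigma_j,\tau']\) both active plans are unchanged: \(m\)'s plan is either the one adopted at \(\sigma_j\) or unchanged by its own passes, which I would handle by induction over the intervening opportunities. The physical state is generated by this pair. Step 1 with \(\tau=\sigma_j\) and \(s=\tau'\) then gives \(u_m^{\rm cand}=u_m^{\rm act,-}|_{[\tau',T]}\), so \(m\) passes. Since \(\mathcal G_m\cap\mathcal G_n=\emptyset\), events are totally ordered, and two consecutive genuine revisions cannot share an actor. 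Hence revisions alternate.

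\textbf{Step 3: first events.} Before any event, both active plans are the initial ones and are not responses, so Step 1 does not apply. If population \(m\) passes at its first opportunity \(\tau_m\), then \(u_m^0|_{[\tau_m,T]}\) is a response there. Step 1 then gives passes at every later own opportunity of \(m\) until \(n\) revises. The same holds for \(n\) if it passes at its first opportunity. If both pass at their first opportunities, neither can ever be the first to revise, so no event occurs. Otherwise, the first event is a genuine revision by some actor. Had that actor passed at its own first opportunity, it would have kept passing until the opponent revised, contradicting that it acts first. So the first event occurs at the actor's first opportunity.
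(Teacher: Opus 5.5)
Your proposal is correct and follows the same route as the paper. You restrict the unique local state--adjoint solution of \cref{eq:frozen-fbs} to \([s,T]\), invoke uniqueness from \cref{ass:response-wellposed}, and then use the resulting persistence after an adoption or a pass to derive alternation and the first-event claim; you simply spell out the chronological bookkeeping the paper compresses into one sentence (one small wording fix: the initial plans \emph{need not} be responses, rather than \emph{are not}).
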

\begin{proof}
Restrict the unique local state--adjoint boundary solution to \([s,T]\).
It has the same terminal condition and opponent input, so local uniqueness
gives \cref{eq:flow-property}. After an adoption, or a pass at which the
current plan already is the response, a population therefore passes at every
later own opportunity until the opponent changes its plan. This proves
alternation and the assertion about the first event. The response identities
follow from the ideal adoption rule.
\end{proof}
Opportunities need not alternate, but genuine revisions do. If the first
event has actor \(p\), write \(q=3-p\), \(\sigma_{2k-1}=s_k\), and
\(\sigma_{2k}=t_k\). Whenever these events exist,
\begin{equation}\label{eq:seed-roles}
m_{2k-1}=p,\qquad m_{2k}=q,
\end{equation}
and the adoption rule gives the alternating recursion
\begin{equation}\label{eq:alternating-response-recursion}
u_p^k=\mathfrak R_p(s_k,X_{s_k},u_q^{k-1}|_{[s_k,T]}),\qquad
u_q^k=\mathfrak R_q(t_k,X_{t_k},u_p^k|_{[t_k,T]}).
\end{equation}

\subsection{Mutual responses and stability of response cycles}
Fix a common boundary \((\tau,x)\), let \(\{p,q\}=\{1,2\}\), and put
\(a_m=K_m^\tau x+h_m^\tau\). One response cycle has the form
\[
u=a_p+\mathcal J_p^\tau v,\qquad
v'=a_q+\mathcal J_q^\tau a_p+Q_\tau v,\qquad
Q_\tau:=\mathcal J_q^\tau\mathcal J_p^\tau.
\]
Setting $v'=v$ gives the mutual-response fixed-point equations
\begin{equation}\label{eq:mutual-response-equations}
u=a_p+\mathcal J_p^\tau v,\qquad v=a_q+\mathcal J_q^\tau u.
\end{equation}

At a boundary \((\tau,x)\), a \emph{simultaneous continuation MFG
equilibrium} is a pair \((u_1,u_2)\in\mathbb L(\tau)^2\), together with
its aggregate trajectory \(X\), such that \(X_\tau=x\), each
representative control is optimal on \([\tau,T]\) with the pair's
aggregate inputs held fixed, and both population consistency conditions
hold. This is the continuation analogue of \cref{def:deterministic-mfe}.
Under \cref{ass:lq,ass:eq-wellposed}, let \(\mathcal E_\tau(x)\) denote
its unique plan pair, with population components \(\mathcal E_{\tau,m}(x)\).

The equilibrium pair is always written in population order:
\begin{equation}\label{eq:continuation-equilibrium-map}
\mathcal E_\tau(x)=(\mathcal E_{\tau,1}(x),\mathcal E_{\tau,2}(x)).
\end{equation}
It is the bounded affine map given by
\cref{eq:equilibrium-system,eq:baseline-plan-formula} on \([\tau,T]\).

Spectral radii of real operators are taken after complexification.
The stability condition \(r(Q_\tau)<1\) will control repeated cycles.

\begin{proposition}[Identification of the continuation MFG equilibrium]
\label{cor:global-implies-response-block}
Under \cref{ass:lq,ass:eq-wellposed,ass:response-wellposed}, for every
\(\tau\in[t_0,T]\) and \(x\in\mathbb R^{2d}\), the mutual frozen-response
equations \cref{eq:mutual-response-equations}, with
\(a_m=K_m^\tau x+h_m^\tau\), have the unique pair
\((\mathcal E_{\tau,p}(x),\mathcal E_{\tau,q}(x))\) in response order.
\end{proposition}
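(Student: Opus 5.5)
The plan is to show two things: the simultaneous continuation equilibrium solves the mutual-response equations \cref{eq:mutual-response-equations}, and any solution of those equations is a simultaneous continuation equilibrium. Uniqueness of the latter under \cref{ass:eq-wellposed} then gives uniqueness of the pair.

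\emph{Existence.} Let \((u_1,u_2)=\mathcal E_\tau(x)\) with aggregate trajectory \(X\), \(X_\tau=x\). Fix the actor \(m\) and \(n=3-m\), and freeze \(v=u_n\) on \([\tau,T]\). Then \(u_m\) satisfies the frozen-opponent continuation problem that defines \(\mathfrak R_m(\tau,x,v)\):
\begin{itemize}[label={}]
\item the representative agent of population \(m\) is optimal against the aggregate \(X\) and the mean-field input \(\bar u^m=u_m\);
\item population-\(m\) consistency holds;
\item the opponent block \(z^n\) is driven by the frozen plan \(u_n\), exactly as in the local system \cref{eq:frozen-fbs}.
\end{itemize}
To make this precise, I will write the equilibrium state--adjoint system \cref{eq:equilibrium-system} and pass to actor-first coordinates through \(\mathfrak P_m\). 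I then check that its \(m\)-components, with \(z^n\) as a known input, coincide with the local forward--backward system of \cref{app:local-frozen-details}. The stationarity condition involves \(B_m^\top\), and the control enters the aggregate through \(B_m+F_m\). By \cref{ass:response-wellposed} and \cref{thm:frozen-response}, the local boundary problem has a unique solution. Hence \(u_m=\mathfrak R_m(\tau,x,u_n)=a_m+\mathcal J_m^\tau u_n\) for \(m=p,q\), which is \cref{eq:mutual-response-equations} in response order.

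\emph{Uniqueness.} Suppose \((u,v)\) solves \cref{eq:mutual-response-equations}, so \(u=\mathfrak R_p(\tau,x,v)\) and \(v=\mathfrak R_q(\tau,x,u)\). Let \(X^{(p)}\) be the aggregate trajectory in the continuation problem for \(p\) against \(v\), and let \(X^{(q)}\) be the one for \(q\) against \(u\). The key observation is that both trajectories solve
\[
\dot X=\mathsf AX+\mathsf B_pu+\mathsf B_qv,\qquad X_\tau=x.
\]
In the local problem the opponent's block is driven by its frozen plan through \(\mathsf B_n\), and the actor's block by its own consistent control. By uniqueness of this linear ODE, \(X^{(p)}=X^{(q)}=:X\).

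Consequently, each representative control is optimal against the same aggregate \(X\), and both consistency conditions hold. So \((u,v)\), reordered to population order, is a simultaneous continuation equilibrium at \((\tau,x)\). The uniqueness of \(\mathcal E_\tau(x)\) under \cref{ass:lq,ass:eq-wellposed} then forces \((u,v)=(\mathcal E_{\tau,p}(x),\mathcal E_{\tau,q}(x))\).

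\emph{Main obstacle.} The main obstacle is bookkeeping: verifying that the local frozen system really encodes the opponent block with the same coefficients \(\mathsf B_n\) and the same coupling \(C_m\) as the global system. It is also where the representative-agent optimality on \([\tau,T]\) under open-loop deterministic controls must be seen to be equivalent to the adjoint stationarity used in both charts.

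I would handle this by writing both systems in stacked form and comparing them block by block. Strict convexity from \(R_m\succ0\) and the nonnegative state weights makes the first-order conditions sufficient, and this settles the optimality equivalence. The result does not require \(r(Q_\tau)<1\) or invertibility of \(I-Q_\tau\). Uniqueness comes from the global no-conjugate condition rather than from the cycle operator.
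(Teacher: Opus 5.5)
Your proposal is correct and follows essentially the paper's argument. The equilibrium solves both frozen boundary systems, so local uniqueness yields the mutual-response equations. Conversely, the two local aggregate paths of any mutual-response pair, transformed to global order, solve the same forward equation $\dot Z=\mathsf AZ+\mathsf B_1u_1+\mathsf B_2u_2$ with $Z(\tau)=x$, hence coincide, and global uniqueness then identifies the pair with $\mathcal E_\tau(x)$. The only cosmetic difference is that the paper closes by stacking the common path and both adjoints into \cref{eq:equilibrium-system} and invoking \cref{prop:global-no-conjugate}, whereas you appeal directly to uniqueness of the simultaneous continuation equilibrium, which rests on the same fact.
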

\begin{proof}
The continuation MFG equilibrium satisfies both frozen-response
systems, so its components solve \cref{eq:mutual-response-equations}.
Conversely, take any mutual-response pair and transform the two local
aggregate paths to global order, \(Z_m=\mathfrak P_m^\top X^{[m]}\).
Both solve
\[
\dot Z_m=\mathsf AZ_m+\mathsf B_1u_1+\mathsf B_2u_2,
\qquad Z_m(\tau)=x.
\]
Forward uniqueness gives \(Z_1=Z_2\). The common path and the adjoints
stacked in population order then solve \cref{eq:equilibrium-system}.
By \cref{prop:global-no-conjugate}, that boundary problem has a unique
solution, with plans \(\mathcal E_\tau(x)\).
\end{proof}

In the actual recursion \cref{eq:alternating-response-recursion},
boundaries, states, and response maps vary with \(k\). The next theorem
controls this varying affine recursion on fixed spaces, assuming that
the state inputs converge. Applying it to the physical process will
therefore require a separate state estimate, supplied by
\cref{thm:zeno-strategy-completion}.

\begin{theorem}[Stability of varying affine response cycles]
\label{thm:moving-affine-response}
Let \(\mathcal V,\mathcal H_p,\mathcal H_q\) be real Banach spaces. Suppose
\(x_{p,k},x_{q,k}\to x_*\) in \(\mathcal V\), and consider
\begin{equation}\label{eq:varying-affine-response}
\begin{aligned}
w_p^k&=K_{p,k}x_{p,k}+h_{p,k}+J_{p,k}w_q^{k-1},\\
w_q^k&=K_{q,k}x_{q,k}+h_{q,k}+J_{q,k}w_p^k.
\end{aligned}
\end{equation}
Here \(K_{m,k}:\mathcal V\to\mathcal H_m\) and
\(J_{m,k}:\mathcal H_n\to\mathcal H_m\) are bounded linear maps, with
\(n\ne m\). Assume
\begin{equation}\label{eq:varying-affine-assumptions}
K_{m,k}\to K_m,J_{m,k}\to J_m\hbox{in operator norm},\qquad h_{m,k}\to h_m\hbox{in }\mathcal H_m.
\end{equation}
If \(r(Q)<1\) for \(Q:=J_qJ_p\), then, for every
\(w_q^0\in\mathcal H_q\), the sequence generated by \eqref{eq:varying-affine-response}
satisfies
\[
\|w_p^k-w_p^*\|_{\mathcal H_p}\to0,
\qquad
\|w_q^k-w_q^*\|_{\mathcal H_q}\to0,
\]
where
\begin{equation}\label{eq:abstract-moving-limit}
w_q^*=(I-Q)^{-1}(a_q+J_qa_p),\qquad
w_p^*=a_p+J_pw_q^*,\qquad a_m:=K_mx_*+h_m.
\end{equation}
\end{theorem}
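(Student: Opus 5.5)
The plan is to reduce the coupled recursion \eqref{eq:varying-affine-response} to a single perturbed affine iteration on \(\mathcal H_q\), and then use an adapted norm in which the limiting cycle \(Q\) is a strict contraction. Substituting the first equation into the second gives
\[
w_q^k=Q_k w_q^{k-1}+c_k,\qquad Q_k:=J_{q,k}J_{p,k},\qquad
c_k:=K_{q,k}x_{q,k}+h_{q,k}+J_{q,k}\bigl(K_{p,k}x_{p,k}+h_{p,k}\bigr).
\]
By \eqref{eq:varying-affine-assumptions} and \(x_{m,k}\to x_*\), we have \(Q_k\to Q\) in operator norm, since products of norm-convergent bounded operators converge. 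Likewise \(c_k\to c:=a_q+J_qa_p\) in \(\mathcal H_q\), because each term is a norm-convergent operator applied to a convergent vector. Since \(r(Q)<1\), \(I-Q\) is invertible, so \(w_q^*=(I-Q)^{-1}c\) is well defined and is the unique fixed point of \(w=Qw+c\).

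The key step is handling the time-varying operators \(Q_k\). I would not use \(\|Q\|<1\), which may fail; instead I will use the Gelfand formula \(r(Q)=\lim_j\|Q^j\|^{1/j}\). Choose \(\rho\in(r(Q),1)\) and define the equivalent norm
\[
\|w\|_*:=\sum_{j\ge0}\rho^{-j}\|Q^jw\|,
\]
which converges because \(\|Q^j\|\le C\rho_1^j\) for some \(\rho_1\in(r(Q),\rho)\). It is equivalent to \(\|\cdot\|\), since \(\|w\|\le\|w\|_*\le C'\|w\|\). A shift of the sum gives \(\|Qw\|_*\le\rho\|w\|_*\). Because the two norms are equivalent, \(\|Q_k-Q\|_*\to0\), so there is \(k_0\) with \(\|Q_k\|_*\le\rho':=(1+\rho)/2<1\) for all \(k\ge k_0\). (Complexification is only needed to define \(r(Q)\); the Gelfand formula holds for the complexified operator, whose norm powers are comparable to the real ones, so the construction can be carried out in the real space.)

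For \(k\ge k_0\), set \(e_k:=w_q^k-w_q^*\). Then
\[
e_k=Q_ke_{k-1}+(Q_k-Q)w_q^*+(c_k-c),
\]
so \(\|e_k\|_*\le\rho'\|e_{k-1}\|_*+\delta_k\) with \(\delta_k\to0\). The discrete Gronwall/Toeplitz lemma then gives \(\limsup_k\|e_k\|_*\le\limsup_k\delta_k/(1-\rho')=0\). To see this, iterate the inequality to obtain \(\|e_k\|_*\le\rho'^{\,k-k_0}\|e_{k_0}\|_*+\sum_{i}\rho'^{\,k-i}\delta_i\), and split the sum at a large index. The finitely many initial steps \(k<k_0\) only affect \(\|e_{k_0}\|_*\), which is finite. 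Hence \(w_q^k\to w_q^*\) in \(\mathcal H_q\).

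Finally, \(w_p^k=K_{p,k}x_{p,k}+h_{p,k}+J_{p,k}w_q^{k-1}\to a_p+J_pw_q^*=w_p^*\) by the same product-convergence argument. The main obstacle is the non-contractive \(Q\) in the original norm. The adapted-norm construction resolves it, and it relies only on the norm convergence \(Q_k\to Q\), not on any commutation between \(Q_k\) and \(Q\).
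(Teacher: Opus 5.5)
Your proposal is correct and follows essentially the same route as the paper: you eliminate \(w_p^k\) to get \(w_q^k=Q_kw_q^{k-1}+c_k\), then renorm via the spectral-radius formula so that \(Q\) becomes a strict contraction. Operator-norm convergence then makes \(Q_k\) uniformly contractive for large \(k\), and you split the iterated error sum before substituting back for \(w_p^k\); the only difference is cosmetic, namely that you use the summed adapted norm \(\sum_{j\ge0}\rho^{-j}\|Q^jw\|\) where the paper uses \(\sup_{j\ge0}\rho^{-j}\|Q^jw\|\).
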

\begin{proof}
Set \(b_{m,k}=K_{m,k}x_{m,k}+h_{m,k}\to a_m\). Eliminating \(w_p^k\)
gives \(w_q^k=Q_kw_q^{k-1}+c_k\), where
\[
Q_k=J_{q,k}J_{p,k}\to Q,\qquad
c_k=b_{q,k}+J_{q,k}b_{p,k}\to c=a_q+J_qa_p.
\]
For \(r(Q)<\rho<1\), the spectral-radius formula makes
\(\|z\|_*:=\sup_{j\ge0}\rho^{-j}\|Q^jz\|\) an equivalent norm with
\(\|Q\|_*\le\rho\). Operator-norm convergence therefore gives
\(\|Q_k\|_*\le\rho'<1\) for \(k>k_0\). With
\(w_q^*=(I-Q)^{-1}c\) and
\(d_k=(Q_k-Q)w_q^*+c_k-c\to0\), iteration yields
\[
\|w_q^k-w_q^*\|_*\le (\rho')^{k-k_0}\|w_q^{k_0}-w_q^*\|_*
+\sum_{j=k_0+1}^k(\rho')^{k-j}\|d_j\|_*.
\]
For any \(J>k_0\), the finite sum over \(j<J\) tends to zero;
the remaining sum is at most
\(\sup_{j\ge J}\|d_j\|_* /(1-\rho')\), which tends to zero as
\(J\to\infty\). Thus \(w_q^k\to w_q^*\); substitution in the first
recursion gives \(w_p^k\to a_p+J_pw_q^*\).
\end{proof}
\subsection{State limits at pre-terminal accumulation}
\label{subsec:conditional-zeno-strategy}
We next separate physical-state convergence from convergence of full
plans. A pre-terminal Zeno accumulation consists of infinitely many
genuine revisions with
\begin{equation}\label{def:finite-zeno-accumulation}
s_k<t_k<s_{k+1},\qquad s_k,t_k\uparrow\bar t<T.
\end{equation}
The process is defined chronologically through finite opportunity prefixes.
Accordingly, \cref{thm:zeno-strategy-completion} assumes that \(\mathcal G_1\cup\mathcal G_2\) is locally finite on
\([t_0,\bar t)\), so that every interval \([t_0,b]\), \(b<\bar t\), contains
only finitely many opportunities. Here the accumulation is
pre-terminal, in the usual hybrid-system sense
\cite{liberzon-2003,goebel-sanfelice-teel-2012}.

The executed control \(u_m^{\rm act}(t)\) is the current-time value of
the active plan. Its integrability alone determines the following state
limit, independently of full-plan convergence.
\begin{proposition}[Executed controls imply a state left limit]
\label{cor:zeno-completion}
Suppose
\[
(u_1^{\rm act},u_2^{\rm act})
\in L^2([t_0,\bar t);\mathbb R^{d_0})^2 .
\]
Then the physical state \(X_t\) has a finite left limit
\(X_{\bar t-}\) as \(t\uparrow\bar t\).
More generally, the same conclusion holds if the executed controls are
only in \(L^1([t_0,\bar t))^2\).
\end{proposition}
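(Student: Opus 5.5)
The plan is to treat the physical state on \([t_0,\bar t)\) as the solution of a linear ODE with an integrable forcing term, and to obtain the left limit from the variation-of-constants formula. Since \(\bar t<T\) is finite, \(L^2([t_0,\bar t))\subset L^1([t_0,\bar t))\) by Cauchy--Schwarz, with \(\|u\|_{L^1}\le(\bar t-t_0)^{1/2}\|u\|_{L^2}\). It therefore suffices to prove the \(L^1\) statement.

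First I will justify that \(X\) satisfies, on every \([t_0,b]\) with \(b<\bar t\), the integral equation
\[
X_t=\Phi^0(t,t_0)X_{t_0}+\int_{t_0}^t\Phi^0(t,s)\bigl(\mathsf B_1u_1^{\rm act}(s)+\mathsf B_2u_2^{\rm act}(s)\bigr)\,ds .
\]
By local finiteness of the opportunities on \([t_0,b]\), this interval splits into finitely many subintervals. On each subinterval the active plans are fixed \(L^2\) (indeed continuous) functions, and \(X\) solves the forward ODE \eqref{eq:misspecified-realized-state}. At opportunity times \(X_\tau^+=X_\tau^-\), so the state is continuous. Concatenating the variation-of-constants formulas across the finitely many subintervals, using the semigroup property \(\Phi^0(t,s)\Phi^0(s,r)=\Phi^0(t,r)\), gives the displayed identity. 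The executed control \(u_m^{\rm act}(t)\) is exactly the piecewise-defined current-time value of the active plan.

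Next, I will write \(\Phi^0(t,s)=e^{\mathsf A(t-s)}=e^{\mathsf A(t-\bar t)}e^{\mathsf A(\bar t-s)}\), which is valid since \(\mathsf A\) is constant. This gives
\[
X_t=e^{\mathsf A(t-\bar t)}\Bigl(e^{\mathsf A(\bar t-t_0)}X_{t_0}+\int_{t_0}^t e^{\mathsf A(\bar t-s)}g(s)\,ds\Bigr),
\qquad g:=\mathsf B_1u_1^{\rm act}+\mathsf B_2u_2^{\rm act}.
\]
The factor \(e^{\mathsf A(\bar t-s)}\) is bounded on \([t_0,\bar t]\), and \(g\in L^1([t_0,\bar t))\). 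Hence the integral \(\int_{t_0}^t e^{\mathsf A(\bar t-s)}g(s)\,ds\) converges as \(t\uparrow\bar t\) by dominated convergence, or equivalently by the Cauchy criterion \(\bigl|\int_{t}^{t'}\cdots\bigr|\le C\int_t^{t'}|g|\to0\). Since \(e^{\mathsf A(t-\bar t)}\to I\), the limit \(X_{\bar t-}\) exists and is finite. It is given by the formula above evaluated at \(t=\bar t\).

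The only delicate point is the first step: the executed control must be a measurable function on \([t_0,\bar t)\), and the state must be defined there by concatenation rather than by a single ODE with a fixed plan. Local finiteness handles both, because only finitely many switches occur before any \(b<\bar t\). The integrability hypothesis is imposed on the whole half-open interval, so no uniform control of the plans themselves is needed. In particular, the argument is insensitive to the behaviour of the unexecuted plan tails.
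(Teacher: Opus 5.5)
Your proof is correct and follows essentially the same route as the paper. It reduces \(L^2\) to \(L^1\) on the finite interval, writes the state by variation of constants with a transition matrix bounded up to \(\bar t\), and concludes by a Cauchy argument driven by the integrable forcing; the only differences are that you factor out \(e^{\mathsf A(t-\bar t)}\) instead of first bounding \(X\) and then using integrability of the full drift \(\mathsf AX+g\), and that you spell out the concatenation (via local finiteness and continuity at opportunities) justifying the variation-of-constants formula, which the paper takes for granted.
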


\begin{proof}
On a finite interval, \(L^2\subset L^1\). Variation of constants gives
\[
X_t=\Phi^0(t,t_0)X_{t_0}
+\sum_{m=1}^2\int_{t_0}^t\Phi^0(t,s)\mathsf B_m u_m^{\rm act}(s)\,ds.
\]
The transition matrix is bounded up to \(\bar t\), so \(L^1\) controls
make \(X\) bounded. The state equation then has integrable drift;
its integral over \([s,t]\) tends to zero as \(s,t\uparrow\bar t\).
Thus \(X_t\) is Cauchy and has a finite left limit.
\end{proof}

Uniform \(L^2\) bounds on individual plans do not suffice here:
\(\int_I|u|\le |I|^{1/2}\|u\|_{L^2(I)}\), and square roots of the
execution-interval lengths need not be summable. To control execution,
we use the native pointwise bounds of \cref{lem:response-uniform-w1infty}.

\subsection{Convergence to the restarted continuation MFG equilibrium}
\label{subsec:moving-boundary-convergence}

We compare plans on the common Hilbert space \(\mathcal H=\mathbb L(t_0)\).
For \(\tau\in[t_0,T]\), let \(E_\tau:\mathbb L(\tau)\to\mathcal H\)
extend by zero on \([t_0,\tau)\), and let
\(R_\tau=\operatorname{Res}_{\tau,t_0}\). Define
\begin{equation}\label{eq:extended-response-operators}
\widehat K_m^\tau:=E_\tau K_m^\tau,\qquad
\widehat J_m^\tau:=E_\tau\mathcal J_m^\tau R_\tau,\qquad
\widehat h_m^\tau:=E_\tau h_m^\tau.
\end{equation}
Under the local no-conjugate condition \cref{ass:response-wellposed},
\(\widehat K_m^\tau\) and \(\widehat J_m^\tau\) are continuous in
\(\tau\) in operator norm, while \(\widehat h_m^\tau\) is continuous
in \(\mathcal H\); see
\cref{prop:response-boundary-continuity}.

Suppose genuine revisions alternate at \(s_k<t_k<s_{k+1}\uparrow\bar t<T\),
with first actor \(p\) and \(q=3-p\), as in
\cref{eq:alternating-response-recursion,def:finite-zeno-accumulation}.
By \cref{prop:response-boundary-continuity},
\[
Q_k:=\widehat J_q^{t_k}\widehat J_p^{s_k}\longrightarrow
Q_\infty:=\widehat J_q^{\bar t}\widehat J_p^{\bar t}
=E_{\bar t}Q_ZR_{\bar t},\qquad
Q_Z:=\mathcal J_q^{\bar t}\mathcal J_p^{\bar t}
\]
in operator norm. Since \(R_{\bar t}E_{\bar t}=I\), the decomposition
\(\mathcal H=L^2([t_0,\bar t))\oplus E_{\bar t}\mathbb L(\bar t)\)
identifies \(Q_\infty\) with \(0\oplus Q_Z\), so
\(r(Q_\infty)=r(Q_Z)\). The state inputs still require an estimate;
their convergence is a conclusion of the next theorem.

\begin{theorem}[Spectral stability and Zeno completion]
\label{thm:zeno-strategy-completion}
Under the setup above, assume
\cref{ass:lq,ass:eq-wellposed,ass:response-wellposed,ass:event-grid},
and suppose that
\(\mathcal G_1\cup\mathcal G_2\) is locally finite on
\([t_0,\bar t)\).

If
\begin{equation}\label{eq:zeno-tail-small-gain}
r_Z:=r(Q_Z)<1,
\end{equation}
then the executed controls are bounded on \([t_0,\bar t)\), and
the physical state admits a finite left limit
\[
x_*:=X_{\bar t-}.
\]

Moreover, the zero-extended continuation plans converge strongly in
\(\mathcal H=\mathbb L(t_0)\):
\begin{equation}\label{eq:zeno-full-plan-convergence}
\begin{aligned}
E_{s_k}u_p^k
&\longrightarrow
E_{\bar t}\mathcal E_{\bar t,p}(x_*),\\
E_{t_k}u_q^k
&\longrightarrow
E_{\bar t}\mathcal E_{\bar t,q}(x_*).
\end{aligned}
\end{equation}
Hence the replanning process converges at the accumulation time to the
continuation MFG equilibrium restarted from the physical state actually
reached.
\end{theorem}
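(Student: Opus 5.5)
Our plan has two stages, matching the two conclusions. First we obtain uniform bounds on the plans, which bound execution and hence give the state limit. Then, with the state limit in hand, we apply \cref{thm:moving-affine-response} on the fixed space \(\mathcal H\).

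\textbf{Stage 1: bounded plans.} Write the recursion \cref{eq:alternating-response-recursion} in zero-extended form on \(\mathcal H\):
\[
w_p^k:=E_{s_k}u_p^k=\widehat K_p^{s_k}X_{s_k}+\widehat h_p^{s_k}+\widehat J_p^{s_k}w_q^{k-1},
\qquad
w_q^k=\widehat K_q^{t_k}X_{t_k}+\widehat h_q^{t_k}+\widehat J_q^{t_k}w_p^k .
\]
This uses \(R_{s_k}E_{t_{k-1}}u=u|_{[s_k,T]}\). Hence \(w_q^k=Q_kw_q^{k-1}+c_k\), with \(\|c_k\|\le C(1+|X_{s_k}|+|X_{t_k}|)\) by \cref{eq:uniform-local-response-gains}.

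By \cref{prop:response-boundary-continuity}, \(Q_k\to Q_\infty\) in operator norm, and \(r(Q_\infty)=r_Z<1\). Choose the equivalent norm \(\|\cdot\|_*\) from the proof of \cref{thm:moving-affine-response}. Then \(\|Q_k\|_*\le\rho'<1\) for \(k>k_0\). Before \(k_0\) only finitely many events occur, by local finiteness, so those plans are finite. This gives
\[
\|w_q^k\|_*\le C_0+C\sup_{t\le t_k}|X_t|.
\]

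To close this coupled estimate we cannot use \(L^2\) bounds alone, since the remark after \cref{cor:zeno-completion} shows they do not control execution. We instead use the pointwise bounds of \cref{lem:response-uniform-w1infty}. We expect that lemma to bound \(\|\mathfrak R_m(\tau,x,v)\|_{L^\infty}\le C(1+|x|+\|v\|_{L^2([\tau,T])})\), because the adjoint and the feedback are continuous functions of the boundary data.

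Then the executed control on \([s_k,t_k)\) satisfies \(|u_p^{\rm act}|\le C(1+\sup_{t\le t_k}|X_t|+\|w_q^{k-1}\|)\), and similarly for \(q\). Set \(\Theta(t)=\sup_{s\le t}|X_s|\). Variation of constants gives
\[
\Theta(t)\le C+C\int_{t_0}^t\Theta(s)\,ds+C(\bar t-t_0)\,\bigl(1+\Theta(t)\bigr).
\]
This is the main obstacle. The \(L^\infty\) control depends on the current state through the plan norm, so the estimate is circular. A naive sup bound leaves \(C(\bar t-t_0)\Theta(t)\) on the right, which is not small.

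To break the circularity we do the following.
\begin{itemize}
\item Start the argument at a late event index \(k_1\), so that the remaining interval \([s_{k_1},\bar t)\) is short and the coefficient \(C(\bar t-s_{k_1})<1/2\) can be absorbed. The earlier portion is a finite prefix and is handled trivially.
\item Alternatively, note that the dependence on \(X_{s_k}\) enters \(c_k\) only through values at event times. A discrete Gronwall inequality on \(\max(\Theta(t_k),\|w_q^k\|_*)\) then works, using \(\sum_k(t_k-t_{k-1})<\infty\).
\end{itemize}
Either way the executed controls are bounded on \([t_0,\bar t)\), hence lie in \(L^1\), and \cref{cor:zeno-completion} yields \(x_*=X_{\bar t-}\).

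\textbf{Stage 2: plan convergence.} Now \(X_{s_k},X_{t_k}\to x_*\). Apply \cref{thm:moving-affine-response} on \(\mathcal V=\mathbb R^{2d}\) and \(\mathcal H_p=\mathcal H_q=\mathcal H\), with \(K_{m,k}=\widehat K_m^{s_k}\) or \(\widehat K_m^{t_k}\), \(J_{m,k}=\widehat J_m^{\cdot}\), and \(h_{m,k}=\widehat h_m^{\cdot}\). These converge to their values at \(\bar t\) by \cref{prop:response-boundary-continuity}, and \(r(Q_\infty)=r_Z<1\). The theorem gives \(w_m^k\to w_m^*\).

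It remains to identify the limit. Since \(Q_\infty=0\oplus Q_Z\), the limits \(w_m^*\) vanish on \([t_0,\bar t)\) and equal \(E_{\bar t}\) applied to the solution of \cref{eq:mutual-response-equations} at \((\bar t,x_*)\). By \cref{cor:global-implies-response-block}, that solution is \((\mathcal E_{\bar t,p}(x_*),\mathcal E_{\bar t,q}(x_*))\), which gives \cref{eq:zeno-full-plan-convergence}.
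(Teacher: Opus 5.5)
Your proposal is correct and follows the paper's proof essentially step for step: an equivalent norm adapted to $Q_\infty$ gives a uniformly contracting tail, the pointwise bounds of \cref{lem:response-uniform-w1infty} control execution and yield the state limit, and \cref{thm:moving-affine-response} together with \cref{cor:global-implies-response-block} gives convergence and identification. The circularity you flag in Stage~1 does not actually arise, since every plan executed at time $t$ was computed at a boundary time $\le t$ from earlier data, so $|u^{\rm act}(t)|\le U_0+U_1M(t)$ with $M(t)=\sup_{t_0\le r\le t}|X_r|$ (your $\sup_{r\le t_k}$ bound on $[s_k,t_k)$ is only non-causal bookkeeping), and the paper closes the state bound with a direct continuous Gronwall argument; your short-tail absorption or discrete Gronwall also works but is unnecessary.
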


\begin{proof}
\emph{Step 1: a uniformly stable tail of response cycles.}
Set
\[
w_p^k:=E_{s_k}u_p^k,
\qquad
w_q^k:=E_{t_k}u_q^k,
\]
with \(w_q^0\) denoting the initial active \(q\)-plan embedded in
\(\mathcal H\).  By the alternating recursion and
\cref{eq:extended-response-operators},
\begin{equation}\label{eq:zeno-closed-extended-recursion}
\begin{aligned}
w_p^k
&=
\widehat K_p^{s_k}X_{s_k}
+\widehat h_p^{s_k}
+\widehat J_p^{s_k}w_q^{k-1},\\
w_q^k
&=
\widehat K_q^{t_k}X_{t_k}
+\widehat h_q^{t_k}
+\widehat J_q^{t_k}w_p^k .
\end{aligned}
\end{equation}

Choose \(r_Z<\rho_0<\rho<1\) and set
\[
\|z\|_*:=\sup_{j\ge0}\rho_0^{-j}\|Q_\infty^jz\|_{\mathcal H}.
\]
The spectral-radius formula gives \(\|z\|_{\mathcal H}\le\|z\|_*\le C_*\|z\|_{\mathcal H}\)
and \(\|Q_\infty\|_*\le\rho_0\). Hence some \(K\ge2\) satisfies
\begin{equation}\label{eq:zeno-late-contraction}
\|Q_k\|_*\le\rho<1,
\qquad k\ge K.
\end{equation}

For \(t<\bar t\), define
\[
M(t):=\sup_{t_0\le r\le t}|X_r|,
\qquad
n_k:=\|w_q^k\|_*.
\]
Eliminating \(w_p^k\) from
\cref{eq:zeno-closed-extended-recursion} gives
\[
\begin{aligned}
w_q^k
={}&
Q_kw_q^{k-1}
+\widehat K_q^{t_k}X_{t_k}
+\widehat h_q^{t_k}
+\widehat J_q^{t_k}
\bigl(
\widehat K_p^{s_k}X_{s_k}
+\widehat h_p^{s_k}
\bigr).
\end{aligned}
\]
By the continuity established above and compactness of \([t_0,T]\),
there exists \(C<\infty\) such that, for \(m=p,q\),
\[
\sup_{\tau\in[t_0,T]}
\left(
\|\widehat K_m^\tau\|
+
\|\widehat J_m^\tau\|
+
\|\widehat h_m^\tau\|_{\mathcal H}
\right)
\le C.
\]
  Since \(s_k<t_k\), we have
\(M(s_k)\le M(t_k)\), and hence, for \(k\ge K\),
\begin{equation}\label{eq:zeno-plan-input-bound}
n_k
\le
\rho n_{k-1}
+
C_F\bigl(1+M(t_k)\bigr)
\end{equation}
for some constant \(C_F<\infty\), independent of \(k\) and of the
state values.

Iterating \cref{eq:zeno-plan-input-bound} and using that \(M\) is
nondecreasing gives
\begin{equation}\label{eq:zeno-plan-past-maximum}
n_k
\le
n_{K-1}
+
\frac{C_F}{1-\rho}
\bigl(1+M(t_k)\bigr),
\qquad k\ge K,
\end{equation}
and every state value appearing at an earlier index is bounded by
\(M(t_k)\).

We next obtain the corresponding estimate for the \(p\)-plans.  Since
\(t_{k-1}<s_k\), applying
\cref{eq:zeno-plan-past-maximum} at index \(k-1\) in the first equation
of \cref{eq:zeno-closed-extended-recursion}, and absorbing the single
index \(k=K\) into the constant, yields constants
\(b_0,b_1<\infty\) such that
\[
\|w_p^k\|_{\mathcal H}
\le
b_0+b_1M(s_k),
\qquad k\ge K.
\]
Using the preceding \(L^2\)-bounds for the opponent plans in
\cref{lem:response-uniform-w1infty}, and noting that restriction does not increase the
\(L^2\)-norm, we obtain constants \(c_0,c_1<\infty\) such that
\begin{equation}\label{eq:zeno-native-response-past-bounds}
\begin{aligned}
\|u_p^k\|_{W^{1,\infty}([s_k,T])}
&\le
c_0+c_1M(s_k),\\
\|u_q^k\|_{W^{1,\infty}([t_k,T])}
&\le
c_0+c_1M(t_k),
\qquad k\ge K.
\end{aligned}
\end{equation}

\emph{Step 2: physical-state bound.}
On \([s_k,t_k)\), the executed pair is \((u_p^k,u_q^{k-1})\), whose
bounds involve only \(M(s_k)\) and \(M(t_{k-1})\). On
\([t_k,s_{k+1})\), it is \((u_p^k,u_q^k)\), involving \(M(s_k)\)
and \(M(t_k)\). All these times precede the execution time.
Initial equilibrium-generated plans are continuous, and adopted responses
satisfy \cref{lem:response-uniform-w1infty}; local finiteness makes the
initial prefix bounded. Absorbing that prefix and \(k\le K\) into
constants gives \(U_0,U_1<\infty\) such that
\begin{equation}\label{eq:zeno-executed-causal-bound}
|u_p^{\rm act}(t)|
+
|u_q^{\rm act}(t)|
\le
U_0+U_1M(t),
\qquad
\text{for a.e. }t\in[t_0,\bar t).
\end{equation}

Using \cref{eq:zeno-executed-causal-bound} in the physical state
equation gives constants \(\gamma_0,\gamma_1\ge0\) such that
\[
|X_t|
\le
|X_{t_0}|
+
\gamma_0(t-t_0)
+
\gamma_1\int_{t_0}^t M(r)\,dr,
\qquad t<\bar t.
\]
Taking the maximum on the left and applying Gronwall's
inequality on each interval \([t_0,b]\), \(b<\bar t\), yields
\begin{equation}\label{eq:zeno-state-uniform-bound}
M(t)
\le
\bigl(
|X_{t_0}|+\gamma_0(T-t_0)
\bigr)
e^{\gamma_1(T-t_0)},
\qquad
t<\bar t.
\end{equation}
The bound is independent of \(b\), so \(M\) and, by
\cref{eq:zeno-executed-causal-bound}, the executed controls are uniformly
bounded. The native \(W^{1,\infty}\) plan norms are also uniformly bounded
by \cref{eq:zeno-native-response-past-bounds}. The bounded physical drift
gives \(|X_t-X_s|\le L_X|t-s|\) for \(t_0\le s<t<\bar t\).
Hence \(X_t\) is Cauchy and admits a finite left limit \(x_*\), with
\(X_{s_k},X_{t_k}\to x_*\).

\emph{Step 3: convergence and identification of the continuation plans.}
The state inputs in \cref{eq:zeno-closed-extended-recursion} now
converge to \(x_*\). By \cref{prop:response-boundary-continuity}, the
extended \(K,J\) operators converge in operator norm and the extended
affine terms converge in \(\mathcal H\). Since \(r(Q_\infty)<1\),
\cref{thm:moving-affine-response} gives strong convergence to the unique
limiting affine fixed point.

By \cref{cor:global-implies-response-block}, the mutual-response fixed
point at the boundary \((\bar t,x_*)\) is precisely the restarted
continuation MFG equilibrium.  Therefore
\[
E_{s_k}u_p^k
\longrightarrow
E_{\bar t}\mathcal E_{\bar t,p}(x_*),
\qquad
E_{t_k}u_q^k
\longrightarrow
E_{\bar t}\mathcal E_{\bar t,q}(x_*),
\]
which is \cref{eq:zeno-full-plan-convergence}.
\end{proof}

The limit is restarted from the physical state actually reached, which
need not lie on the original complete-information equilibrium trajectory.
\Cref{prop:response-operator-compact} gives a coefficient-level sufficient condition for
\cref{eq:zeno-tail-small-gain}. In particular, for fixed model data,
the spectral-stability condition holds when the remaining horizon is
sufficiently short.

\subsection{A counterexample: convergent state and divergent plans}

Continuation well-posedness alone does not control the remaining plans:
the following model has an unstable limiting response cycle.

\begin{proposition}[State convergence does not imply plan convergence]
\label{prop:zeno-strategy-counterexample}
There is a scalar two-population LQ model satisfying the global and local
continuation well-posedness assumptions, with a unique continuation MFG
equilibrium at every boundary, and a pre-terminal Zeno revision sequence
in which every scheduled update is genuine. The executed pair satisfies
\[
(u_p^{\rm act},u_q^{\rm act})
\in L^2([t_0,\bar t);\mathbb R)^2,
\]
so \(X_{\bar t-}\) exists, whereas the continuation plans satisfy
\[
\|u_p^k\|_{L^2([\bar t,T])}
+\|u_q^k\|_{L^2([\bar t,T])}
\longrightarrow\infty.\]
In particular, they do not converge to the continuation MFG equilibrium
restarted from \(X_{\bar t-}\).
\end{proposition}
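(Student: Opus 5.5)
The counterexample will be a scalar model ($d=d_0=1$) in which the limiting cycle $Q_Z=\mathcal J_q^{\bar t}\mathcal J_p^{\bar t}$ has spectral radius strictly larger than one. We will then choose the revision times so that the plan tails are amplified cycle by cycle while the executed pieces shrink fast enough to stay square integrable.

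\emph{Choice of model.} We take $A_m=C_m^m=0$ and $B_m=1$. The opponent coupling $C_m^n$ and the terminal weights will be tuned, and we set $F_m=0$ so that the representative's control enters exactly as in the aggregate dynamics. We put the tracking weight on the opponent's aggregate ($\bar Q_m^n$ with $\bar\Gamma_m^n=\gamma$), so that a response is large and has the opposite sign to the opponent's plan; this makes $\mathcal J_m^\tau$ have a large gain. In the scalar frozen-response system the terminal condition is $\lambda(T)=\bar q\,(x(T)-\gamma z^n(T))$, and $z^n(T)$ depends on the frozen $v$ through $\int_\tau^T v$. The operator $\mathcal J_m^\tau$ is then a rank-one map of the form $v\mapsto -\kappa(\tau)\,\mathbf 1\,\langle \mathbf 1,v\rangle$, possibly with exponential weights. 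Consequently $Q_Z$ is rank one, and its only nonzero eigenvalue is explicit, roughly $\kappa_p\kappa_q(T-\bar t)^2$ times a constant. We will verify the global and local no-conjugate conditions by computing the scalar Riccati/graph blocks explicitly, as in \cref{prop:singular-gramian-response-example}: each $U$ is an affine function of $T-\tau$ or a hyperbolic combination with positive coefficients, so it stays nonzero. We then choose the weights so that this eigenvalue exceeds one on $[t_0,T]$. This is consistent with $\|\mathcal J\|\le C(T-\tau)$, because $C$ may be large. Uniqueness of the continuation equilibrium at every boundary follows from \cref{ass:eq-wellposed} via \cref{cor:global-implies-response-block}.

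\emph{Revision sequence.} We take $s_k<t_k<s_{k+1}\uparrow\bar t$ with gaps $\delta_k$ decaying geometrically, say $\delta_k=\theta^k$ with $\theta$ small. In the rank-one structure the plans are essentially $u_p^k\approx \alpha_k\mathbf 1$ on $[s_k,T]$. The recursion \cref{eq:alternating-response-recursion} gives $\alpha_{k+1}\approx -\lambda\,\alpha_k+O(|X|+1)$ with $|\lambda|>1$. Because the state terms enter affinely, we can check that $|\alpha_k|\gtrsim \lambda_0^k$ for some $\lambda_0>1$ by a comparison argument. Since $\alpha_k\neq\alpha_{k-1}$ on the relevant interval, every update is genuine. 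We will make the initial $q$-plan generic, or add a nonzero offset, to rule out accidental passes. The executed control on $[s_k,s_{k+1})$ is of size $|\alpha_k|$, so
\[
\|u^{\rm act}\|_{L^2([t_0,\bar t))}^2\lesssim\sum_k \lambda_0^{2k}\theta^k<\infty
\quad\text{whenever }\theta\lambda_0^2<1 .
\]
The same choice gives $L^1$ integrability. \Cref{cor:zeno-completion} then yields $X_{\bar t-}$, whereas $\|u^k\|_{L^2([\bar t,T])}\approx|\alpha_k|(T-\bar t)^{1/2}\to\infty$. Divergence rules out convergence to the bounded equilibrium pair $\mathcal E_{\bar t}(X_{\bar t-})$.

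\emph{Main obstacle.} The delicate step is closing the loop between state and plan growth. The state feeds back into $K_m^\tau X$, and $\tau$ varies with $k$, so the effective multiplier is $\lambda(s_k,t_k)\to\lambda(\bar t)$. We will handle this with the continuity of \cref{prop:response-boundary-continuity}, which gives $|\lambda(s_k,t_k)|\ge\lambda_0>1$ for large $k$. The state is controlled by a Gronwall-type bound: $|X|$ grows at most by $\sum_k|\alpha_k|\delta_k$, which is summable, so the $K X$ contributions stay bounded and cannot cancel the geometric growth of $\alpha_k$. If exact cancellation is a concern, we will choose $X_{t_0}$ and the offsets with a sign pattern aligned with the unstable direction so that all terms reinforce one another.
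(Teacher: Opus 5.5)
\textbf{The gap is the sign structure of your coupling, and it breaks the global well-posedness claim.} In your design both populations respond with the sign opposite to the opponent's plan, so, as you write, $\mathcal J_m^\tau=-\kappa_m(\tau)\mathbf 1\langle\phi_m,\cdot\rangle$ with $\kappa_m>0$ and positive weights. The only nonzero eigenvalue $\mu(\tau)$ of the rank-one operator $Q_\tau=\mathcal J_q^\tau\mathcal J_p^\tau$ is then real, continuous in $\tau$, and \emph{positive}.

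By \cref{prop:response-operator-compact}, $|\mu(\tau)|\le C(T-\tau)^2\to0$ as $\tau\to T$, so it cannot ``exceed one on $[t_0,T]$''. More importantly, $\mu(\bar t)>1$ forces $\mu(\tau^*)=1$ for some $\tau^*\in(\bar t,T)$. Then $I-Q_{\tau^*}$ has a kernel, and the mutual-response equations \cref{eq:mutual-response-equations} at $\tau^*$ are not uniquely solvable. By \cref{cor:global-implies-response-block,prop:global-no-conjugate}, this contradicts \cref{ass:eq-wellposed}.

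Concretely, take $C_m^n=0$ and $G_m=(x_m(T)+\gamma z_n(T))^2$ with $\gamma>0$ for both $m$. Then $U(\tau)=\begin{psmallmatrix}1+H&\gamma H\\ \gamma H&1+H\end{psmallmatrix}$ and $\det U(\tau)=(1+H)^2-\gamma^2H^2$, while the cycle gain is $\mu=(\gamma H/(1+H))^2$. The cycle is unstable exactly where $\det U<0$, and the same determinant arises if both populations follow instead of evade. The local blocks have determinant $1+H$, so your check passes for them but fails for the global block.

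The missing idea is that a scalar counterexample needs a \emph{negative} cycle eigenvalue below $-1$: one population pursues and the other evades. The paper takes $G_p=(x_p(T)-4z_q(T))^2$, $G_q=(x_q(T)+4z_p(T))^2$, $\dot x_m=u_m$, $T=2$, and $\bar t=1$. This gives gains $\pm4H/(1+H)$, $\det U=(1+H)^2+16H^2>0$, and $Q_Z\mathbf 1=-4\mathbf 1$.

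Once the sign is fixed, three further points need care.
\begin{itemize}
\item Your $L^2$ estimate for the executed controls needs an \emph{upper} bound $|\alpha_k|\le C\Lambda^k$ with $\theta\Lambda^2<1$, not the lower factor $\lambda_0$.
\item Your ``comparison argument'' must rule out the one bounded solution of a scalar recursion $\alpha_k=\mu_k\alpha_{k-1}+d_k$ with bounded forcing. The state bound and the growth bound must therefore be proved jointly, not sequentially. The paper does this by a cone induction from a zero boundary state. With $s_k=1-64^{-k}$, $t_k=1-\tfrac12 64^{-k}$ and $X_{s_1}=0$, it proves $S_k\le0.1|b_{k-1}|$ and $3|b_{k-1}|<|b_k|<5.1|b_{k-1}|$. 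Sign alternation then makes every update genuine, and $5.1^2/64<1$ gives square-integrable execution.
\item The initial plans are not free: they must be equilibrium plans $u_m[b_0^m]$ generated by beliefs of the form \cref{eq:plan-generating-belief}. The paper verifies this for $u_p^0=0$, $u_q^0=b_0$, whereas a ``generic'' initial $q$-plan or an added offset need not be admissible.
\end{itemize}
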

\begin{proof}
Take \(p=1\), \(q=2\), \(d=d_0=1\), \(T=2\), representative-agent dynamics
\(\dot x_p=u_p,\ \dot x_q=u_q\), and \(R_p=R_q=1\), with no
running state cost. With population means
\(z_m=\mathbb E x_m\) and aggregate state \(X=(z_p,z_q)^\top\),
the representative-agent terminal costs are
\[
G_p(x_p,X)=\bigl(x_p(T)-4z_q(T)\bigr)^2,\qquad
G_q(x_q,X)=\bigl(x_q(T)+4z_p(T)\bigr)^2.
\]
The representative optimizes with the aggregate path fixed; population
consistency is imposed after stationarity. Applying the frozen-response
system \cref{eq:frozen-fbs} gives, for
\(x=(z_p(\tau),z_q(\tau))^\top\), \(H=2-\tau\), and opponent plan \(v\),
\begin{equation}\label{eq:zeno-counterexample-responses}
\begin{aligned}
\mathfrak R_p(\tau,x,v)
&=-\frac{z_p(\tau)-4z_q(\tau)-4\int_\tau^2 v(r)\,dr}{1+H}
  \,\mathbf 1_{[\tau,2]},\\
\mathfrak R_q(\tau,x,v)
&=-\frac{z_q(\tau)+4z_p(\tau)+4\int_\tau^2 v(r)\,dr}{1+H}
  \,\mathbf 1_{[\tau,2]}.
\end{aligned}
\end{equation}
For constant opponent plans, the scalar response gains are
\(\lambda_p(H):=4H/(1+H)\) and
\(\lambda_q(H):=-4H/(1+H)\). Every frozen response is constant on its
continuation interval.

For the state block \(U(\tau)\) of the global boundary lift
\cref{eq:global-boundary-lift}, and the local frozen-response state blocks
\(U_m(\tau)\) of \cref{eq:local-frozen-lift},
\[
\det U(\tau)=(1+H)^2+16H^2>0,\qquad
\det U_p(\tau)=\det U_q(\tau)=1+H>0.
\]
Hence \cref{ass:eq-wellposed,ass:response-wellposed} hold at every
continuation boundary by
\cref{prop:global-no-conjugate,prop:local-frozen-riccati}.
The mutual frozen-response pair is the unique continuation MFG
equilibrium by \cref{cor:global-implies-response-block}.
The explicit state blocks and scalar fixed-point calculation are in
\cref{app:divergence-details}.

At \(\bar t=1\), the rank-one operator
\(Q_Z=\mathcal J_q^1\mathcal J_p^1\) satisfies
\(Q_Z\mathbf1_{[1,2]}=-4\mathbf1_{[1,2]}\), so \(r(Q_Z)=4>1\).

We choose shrinking execution intervals for geometrically growing
continuation amplitudes. Let \(t_0<1-64^{-1}\) and
\[
s_k=1-64^{-k},\qquad
t_k=1-\frac12\,64^{-k},\qquad k\ge1.
\]
Then \(s_k<t_k<s_{k+1}\) and \(s_k,t_k\uparrow1\).
Choose \(b_0\ne0\), \(X_0=(0,-s_1b_0)^\top\), and the initial
plans \(u_p^0=0\), \(u_q^0=b_0\), with no opportunities before
\(s_1\). These plans are generated by beliefs of the form
\cref{eq:plan-generating-belief}, as verified in
\cref{app:divergence-details}. They give \(X_{s_1}=0\), and the first
\(p\)-response is nonzero. Before \(s_k\),
\(u_q^{k-1}\equiv b_{k-1}\); the response at \(s_k\) is
\(u_p^k\equiv a_k\) on \([s_k,2]\), and that at \(t_k\) is
\(u_q^k\equiv b_k\) on \([t_k,2]\).
Put \(H_{p,k}=2-s_k\) and \(H_{q,k}=2-t_k\).
Then \cref{eq:zeno-counterexample-responses} gives
\begin{equation}\label{eq:zeno-counterexample-amplitudes}
\begin{aligned}
a_k
&=\lambda_{p,k}b_{k-1}+d_{p,k},
&\lambda_{p,k}
&=\frac{4H_{p,k}}{1+H_{p,k}},
&d_{p,k}
&=-\frac{z_p(s_k)-4z_q(s_k)}{1+H_{p,k}},
\\
b_k
&=\lambda_{q,k}a_k+d_{q,k},
&\lambda_{q,k}
&=-\frac{4H_{q,k}}{1+H_{q,k}},
&d_{q,k}
&=-\frac{z_q(t_k)+4z_p(t_k)}{1+H_{q,k}}.
\end{aligned}
\end{equation}
The executed pair is \((a_k,b_{k-1})\) on \([s_k,t_k)\) and
\((a_k,b_k)\) on \([t_k,s_{k+1})\); the full plans extend to \(T=2\).

For the physical boundary-state size
\(S_k:=|z_p(s_k)|+|z_q(s_k)|\), the induction in
\cref{app:divergence-details} proves
\begin{equation}\label{eq:zeno-counterexample-cone}
S_k\le0.1|b_{k-1}|,\qquad
3|b_{k-1}|<|b_k|<5.1|b_{k-1}|.
\end{equation}
In particular,
\[
|b_k|>3^k|b_0|\longrightarrow\infty,
\qquad
|a_k|\ge1.8\,3^{k-1}|b_0|\longrightarrow\infty.\]
The estimates in \cref{app:divergence-details} also give
\[
\operatorname{sgn}(b_k)=-\operatorname{sgn}(b_{k-1}),
\qquad
\operatorname{sgn}(a_k)=-\operatorname{sgn}(a_{k-1}) \quad (k\ge2),
\]
so every scheduled update is genuine. Both plans are constant on
the common tail \([1,2]\); hence
\begin{equation}\label{eq:zeno-counterexample-plan-divergence}
\|u_p^k\|_{L^2([1,2])}
=|a_k|\longrightarrow\infty,
\qquad
\|u_q^k\|_{L^2([1,2])}
=|b_k|\longrightarrow\infty.
\end{equation}
It remains to check the executed controls. By
\cref{eq:zeno-counterexample-cone} and the bound for \(a_k\) in
\cref{app:divergence-details},
\(|b_k|\le |b_0|5.1^k\) and
\(|a_k|\le2.3|b_0|5.1^{k-1}\).
Since \(s_{k+1}-s_k=(63/64)64^{-k}\), the controls executed on
\([s_k,s_{k+1})\) satisfy
\[
\int_{s_1}^{1}
 \bigl(|u_p^{\rm act}(t)|^2+|u_q^{\rm act}(t)|^2\bigr)\,dt
\le C\sum_{k\ge1}64^{-k}5.1^{2k}<\infty,
\qquad \frac{5.1^2}{64}<1.
\]
The initial controls on \([t_0,s_1)\) are restrictions of admissible
plans. Hence
\((u_p^{\rm act},u_q^{\rm act})\in L^2([t_0,1);\mathbb R)^2\),
and \cref{cor:zeno-completion} gives the finite physical-state
left limit \(X_{1-}\). The continuation plans nevertheless diverge,
so they cannot converge to the restarted continuation MFG
equilibrium. 
\end{proof}

\section{Local implementation from aggregate observations}\label{sec:async}
The ideal process requires the current state and opponent plan. Local
aggregate observations, the population's own plan, and public revision
events allow each population to recover and maintain these inputs.
The resulting implementation agrees with the ideal process on every
finite chronological opportunity prefix.
\subsection{Available local information and initialization}
\label{subsec:information-structure}
At time \(t-\), population \(m\) has its observed aggregate path
\(z^m_{[0,t]}=H_mX_{[0,t]}\), its own active continuation plan
\(u_m^{\mathrm{act}}|_{[t,T]}\), and the public revision record
\(\mathsf{Rec}(t-)\). We collect these quantities as
\begin{equation}\label{eq:operational-information-state}
\mathfrak I_{t-}^m=
\bigl(z^m_{[0,t]},u_m^{\rm act}|_{[t,T]},\mathsf{Rec}(t-)\bigr).
\end{equation}
At \(t_0\), population \(m\) forms the residual
\(y_m=z^m-H_mX^{m,\mathrm{base}}\) from \cref{eq:base-trajectory-map}.
\begin{assumption}[Initial recoverability]
\label{ass:protocol-observability}
There is no revision on \([0,t_0]\). For each population, the base terms
in \cref{eq:protocol-affine-pair}, the observation map \(\mathcal O_m\)
in \cref{eq:integral-output-operator}, and the protocol target
\(T_m^{\rm prot}\) in \cref{eq:protocol-initial-target} are known, and
\(\ker\mathcal O_m\subseteq\ker T_m^{\rm prot}\).
\end{assumption}
Under this assumption, by \cref{thm:protocol-sufficient-observation}, the initializer
\(\mathcal K_m^{\rm prot}\) in \cref{eq:protocol-canonical-factor}
recovers the state--plan pair in \cref{eq:protocol-affine-pair}:
\begin{equation}\label{eq:protocol-initializer}
\bigl(\widehat X_{t_0}^m,\widehat u_{n|m}^{\rm act}\bigr)
=\bigl(X_{t_0}^{m,\mathrm{base}},s_n^{\rm pl}|_{[t_0,T]}\bigr)
+\mathcal K_m^{\rm prot}y_m
=\bigl(X_{t_0},u_n^0|_{[t_0,T]}\bigr).
\end{equation}
\subsection{Causal propagation and event updates}\label{subsec:causal-propagation}
Write \(\widehat X_t^m\) and \(\widehat u_{n|m}^{\rm act}\) for the
locally maintained state and opponent plan, with \(n=3-m\).

\emph{Between revision events.}
The estimate \(\widehat X_t^m\) of the physical aggregate state
\(X_t\) evolves by the causal integral observer
\begin{equation}\label{eq:causal-integral-observer}
\begin{aligned}
\widehat X_t^m=\widehat X_{t_0}^m+\int_{t_0}^t\bigl(\mathsf A\widehat X_s^m+
\mathsf B_mu_m^{\rm act}(s)+\mathsf B_n\widehat u_{n|m}^{\rm act}(s)\bigr)\,ds+\int_{t_0}^tL_m(s)\bigl(z_s^m-H_m\widehat X_s^m\bigr)\,ds,
\end{aligned}
\end{equation}
Here \(\widehat u_{n|m}^{\rm act}\) is population \(m\)'s stored opponent
active continuation plan, \(z^m=H_mX\) is its observed local aggregate,
and \(L_m:[t_0,T]\to\mathbb R^{2d\times d}\) is a known deterministic
bounded measurable observer gain; \(L_m=0\) is allowed. Between revisions,
the stored plan remains fixed while its unexecuted restriction shortens
with time. Exact initialization in \cref{eq:protocol-initializer} makes
\(\widehat X_{t_0}^m=X_{t_0}\). 

\emph{At an own opportunity.}
At \(\tau\in\mathcal G_m\), population \(m\) forms the candidate
\[
u_m^{\rm cand}
=\mathfrak R_m\bigl(\tau,\widehat X_\tau^m,
\widehat u_{n|m}^{\rm act}|_{[\tau,T]}\bigr).
\]
The candidate is compared in \(\mathbb L(\tau)\) with
\(u_m^{\rm act,-}|_{[\tau,T]}\). Equality is a pass; otherwise population
\(m\) adopts \(u_m^{\rm cand}\) and announces the genuine revision
\((\tau,m)\) for \(\mathsf{Rec}\) in \cref{eq:event-only-record}.

\emph{After an opponent revision.}
If \((\sigma,n)\) is announced, population \(m\) regenerates the new
opponent continuation plan using its reconstructed state and its own
active plan:
\begin{equation}\label{eq:event-record-regeneration}
\widehat u_{n|m}^{\rm new}
=\mathfrak R_n(\sigma,\widehat X_\sigma^m,u_m^{\rm act}|_{[\sigma,T]}).
\end{equation}
The stored plan is replaced on \([\sigma,T]\):
\[
\widehat u_{n|m}^{\rm act}|_{[\sigma,T]}
=\widehat u_{n|m}^{\rm new}.
\]
\subsection{Exact local implementation}
\begin{theorem}[Exact local implementation]
\label{thm:mean-field-exact-implementation}\label{thm:record-recursion}
Under
\cref{ass:lq,ass:eq-wellposed,ass:response-wellposed,ass:event-grid,ass:protocol-observability},
the local implementation coincides with the ideal asynchronous replanning
process on every finite chronological opportunity prefix. In particular,
for \(m\in\{1,2\}\) and \(n=3-m\),
\begin{equation}\label{eq:local-full-prefix-equality}
\widehat X_t^m=X_t,
\qquad
\widehat u_{n|m}^{\rm act}|_{[t,T]}
=
u_n^{\rm act}|_{[t,T]},
\end{equation}
between opportunities and immediately before and after every processed opportunity.
Hence the two processes have the same pass/revision decisions, public
revision record, active continuation plans, and physical aggregate path.
\end{theorem}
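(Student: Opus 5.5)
We argue by induction over the chronologically ordered opportunities in a fixed finite prefix. Let \(\tau_1<\tau_2<\dots<\tau_L\) enumerate the opportunities of \(\mathcal G_1\cup\mathcal G_2\) in the prefix; they are distinct by \cref{ass:event-grid}. Put \(\tau_0:=t_0\) and \(\tau_{L+1}:=T\) for convenience. The induction hypothesis at stage \(\ell\) is that \cref{eq:local-full-prefix-equality} holds for both \(m\) at time \(\tau_\ell^+\), immediately after processing \(\tau_\ell\). It also asserts that the local and ideal records and active plans agree up to that time.

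\emph{Base case.} At \(t_0\) there is no revision on \([0,t_0]\) by \cref{ass:protocol-observability}. By \cref{thm:protocol-sufficient-observation}, the kernel inclusion gives \(T_m^{\rm prot}=\mathcal K_m^{\rm prot}\mathcal O_m\). Applying this to \(y_m=\mathcal O_mb_0^n\) and adding the known offsets yields \cref{eq:protocol-initializer}. Hence \(\widehat X^m_{t_0}=X_{t_0}\) and \(\widehat u^{\rm act}_{n|m}=u_n^0|_{[t_0,T]}=u_n^{\rm act}|_{[t_0,T]}\).

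\emph{Between opportunities.} On \((\tau_\ell,\tau_{\ell+1})\) no plan changes in either process. The plans agree at \(\tau_\ell^+\) by hypothesis, so \(\widehat u^{\rm act}_{n|m}=u_n^{\rm act}\) a.e. on \([\tau_\ell,T]\). Set \(e_t:=\widehat X_t^m-X_t\) and subtract the physical equation from the observer \cref{eq:causal-integral-observer}. Using \(z^m=H_mX\), the observer term equals \(-L_mH_me\), so
\[
\dot e=(\mathsf A-L_mH_m)e,\qquad e(\tau_\ell)=0.
\]
This is a linear ODE with bounded measurable coefficients. Carathéodory uniqueness, or Gronwall's inequality, gives \(e\equiv0\) on \([\tau_\ell,\tau_{\ell+1}]\). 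By continuity of both paths, \(\widehat X^m_{\tau_{\ell+1}}=X_{\tau_{\ell+1}}\). The pre-opportunity values at \(\tau_{\ell+1}\) therefore agree.

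\emph{At an opportunity \(\tau=\tau_{\ell+1}\in\mathcal G_m\).} The actor's candidate uses \((\widehat X^m_\tau,\widehat u^{\rm act}_{n|m}|_{[\tau,T]})=(X_\tau,u_n^{\rm act,-}|_{[\tau,T]})\). It therefore equals the ideal candidate, since \(\mathfrak R_m\) is a well-defined map by \cref{thm:frozen-response}. The comparison is against the actor's own plan, which it knows exactly, so the pass/revision decision and the adopted plan coincide. The records coincide as well, because revisions are announced immediately with actor and time. The actor's stored opponent plan and state are unchanged in both processes.

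For the non-actor \(n\), nothing happens after a pass. After a revision \((\tau,m)\), it regenerates \(\mathfrak R_m(\tau,\widehat X^n_\tau,u_n^{\rm act}|_{[\tau,T]})\) by \cref{eq:event-record-regeneration}. Since \(\widehat X^n_\tau=X_\tau\) and \(u_n^{\rm act,-}=u_n^{\rm act,+}\) on \([\tau,T]\), this is exactly the ideal adopted \(u_m^{\rm cand}\). The key point is that the opponent plan frozen by the actor is the non-actor's own plan, which the non-actor knows exactly; this is what makes regeneration exact. The states are unchanged since \(X_\tau^+=X_\tau^-\), which closes the induction. Running it through stage \(L+1\) covers the final interval \([\tau_L,T]\), or up to the end of the prefix.

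The main obstacle is bookkeeping rather than analysis. Equality must hold in \(\mathbb L(\tau)\), so a.e., while the pass test compares elements of \(\mathbb L(\tau)\). The state, by contrast, must agree pointwise at \(\tau\), which the continuity argument above provides. The argument also relies on opportunities being disjoint and finitely many in the prefix, so the induction is well ordered. It says nothing past a Zeno accumulation, which is why the statement is restricted to finite prefixes.
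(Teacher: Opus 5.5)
Your proposal is correct and follows essentially the same route as the paper: induction over the finite chronological prefix, exact initialization from the factorization $T_m^{\rm prot}=\mathcal K_m^{\rm prot}\mathcal O_m$, and the homogeneous observer-error equation $\dot e=(\mathsf A-L_mH_m)e$ with $e(\tau_\ell)=0$ between opportunities. At opportunities you also use, as the paper does, that regeneration is exact because the opponent plan frozen by the actor is the non-actor's own plan; the only thing you leave implicit is that the local and ideal physical paths coincide because the same active plans drive them from the same $X_{t_0}$, which the paper lists explicitly as part of the invariant.
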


\begin{proof}
Induct over a finite chronological opportunity prefix. The invariant
is \cref{eq:local-full-prefix-equality} for both populations, together
with equality of physical states, active plans, and public records in
the local and ideal processes. It holds at \(t_0\) by
\cref{eq:protocol-initializer}, with the common plans \(u_m^0\) and empty
record.

Suppose it holds after time \(a\), before the next opportunity \(\tau\).
The common active plans generate the same physical state. The stored
opponent plans are exact, so \(r_i=\widehat X^i-X\) satisfies
\[
r_i(t)=\int_a^t(\mathsf A-L_i(s)H_i)r_i(s)\,ds,\qquad r_i(a)=0.
\]
Uniqueness gives \(r_i=0\) through \(\tau\). If the actor is \(m\),
both procedures therefore evaluate \(\mathfrak R_m\) on the same state
and opponent plan, compare with the same active plan, and make the
same decision. A pass preserves the invariant. At a genuine revision,
both adopt the same candidate and announce \((\tau,m)\). The non-actor
\(n\) regenerates
\[
\widehat u_{m|n}^{\rm new}
=\mathfrak R_m(\tau,\widehat X_\tau^n,u_n^{\rm act,-}|_{[\tau,T]})
=\mathfrak R_m(\tau,X_\tau,u_n^{\rm act,-}|_{[\tau,T]})
=u_m^{\rm act,+}|_{[\tau,T]}.
\]
The actor's stored opponent plan remains exact, and the physical state
is continuous. Thus the invariant holds after the update, closing the
induction.
\end{proof}

Local finiteness before \(\bar t\) makes every pre-accumulation interval
a finite prefix. Under \cref{thm:zeno-strategy-completion}, exact local
implementation therefore inherits bounded execution, the state left
limit, and strong plan convergence to
\[
\mathcal E_{\bar t}(X_{\bar t-}).
\]

\section{Finite-population robustness}\label{sec:finite-population-robustness}

\Cref{thm:mean-field-exact-implementation} uses exact mean-field
aggregates. Empirical aggregates fluctuate around these means, so exact
equality of candidate and active plans is not a robust pass criterion.
We introduce a vanishing tolerance in the finite-population implementation
and prove record agreement and mean-square error bounds on each fixed
finite chronological opportunity prefix.

Let population \(m\) contain \(N_m\) agents using
\(u_m=\bar u^m=u_m^{N,\rm act}\) in \cref{eq:model-state}, with
aggregate input \(X^N\). Set
\[
N_*:=\min(N_1,N_2),\qquad
z_t^{m,N}:=N_m^{-1}\sum_{i=1}^{N_m}x_t^{m,i,N},\qquad
X_t^N:=\bigl((z_t^{1,N})^\top,(z_t^{2,N})^\top\bigr)^\top.
\]
Averaging the individual dynamics gives
\begin{equation}\label{eq:finite-aggregate-dynamics}
dX_t^N=\bigl(\mathsf AX_t^N+\mathsf B_1u_1^{N,\rm act}(t)
+\mathsf B_2u_2^{N,\rm act}(t)\bigr)dt+dM_t^N,
\end{equation}
where the population-\(m\) block of \(M_t^N\) is
\(N_m^{-1}\sum_iD_mW_t^{m,i}\).

\begin{assumption}[Finite-population sampling]
\label{ass:finite-population-sampling}
Initial states are independent within each population, have means
\(z_0^m\), and have uniformly bounded second moments. The Brownian
motions are mutually independent and independent of the initial states.
Both systems use the same prescribed plans \(u_1^0,u_2^0\) on
\([0,t_0]\) and the same fixed exogenous opportunity grids.
\end{assumption}
In particular,
\begin{equation}\label{eq:finite-sampling-bound}
\mathbb E|X_0^N-X_0|^2+\mathbb E\sup_{t\le T}|M_t^N|^2\le C/N_*.
\end{equation}
Population \(m\) observes \(z^{m,N}=H_mX^N\) and applies
\cref{eq:protocol-initializer} to the empirical residual
\(y_m^N:=z^{m,N}-H_mX^{m,\mathrm{base}}\), keeping the same known
base trajectory and law means as in \cref{eq:base-trajectory-map}.
After \(t_0\), population \(m\) propagates \(\widehat X^{m,N}\) by
\cref{eq:causal-integral-observer} with \(z^{m,N}\) and the finite-population
active and stored plans, then applies the response and regeneration
rules of \cref{subsec:causal-propagation} to the reconstructed quantities.

The bounded initializer acts
pathwise on \(y_m^N\in\mathcal Y_m\); the causal integral observer and
continuous affine response maps then define measurable, adapted plans
on each finite prefix.

For a tolerance satisfying
\begin{equation}\label{eq:finite-threshold-scaling}
\eta_N\longrightarrow0,\qquad N_*\eta_N^2\longrightarrow\infty,
\end{equation}
the finite-population decision at \(\tau\in\mathcal G_m\) is
\begin{equation}\label{eq:finite-threshold-rule}
\text{pass if }\ 
\|u_m^{N,\rm cand}-u_m^{N,\rm act,-}|_{[\tau,T]}\|_{\mathbb L(\tau)}
\le\eta_N;\qquad \text{revise otherwise}.
\end{equation}
The mean-field rule remains exact equality.

Fix \(K\ge1\) and a finite chronological opportunity prefix
\(t_0<\tau_1<\cdots<\tau_K<T\), with actor \(m_j\) at \(\tau_j\).
Write \(\gamma_j^{\rm MF}:=\|u_{m_j}^{\rm cand}
-u_{m_j}^{\rm act,-}|_{[\tau_j,T]}\|_{\mathbb L(\tau_j)}\) and
\(\gamma_j^N:=\|u_{m_j}^{N,\rm cand}
-u_{m_j}^{N,\rm act,-}|_{[\tau_j,T]}\|_{\mathbb L(\tau_j)}\).
The mean-field rule passes exactly when \(\gamma_j^{\rm MF}=0\) and
revises exactly when \(\gamma_j^{\rm MF}>0\); the finite-population
rule passes when \(\gamma_j^N\le\eta_N\) and revises otherwise. Set
\[
I_K^{\rm rev}:=\{j\in\{1,\ldots,K\}:\gamma_j^{\rm MF}>0\},\qquad
\delta_K:=\begin{cases}
\min_{j\in I_K^{\rm rev}}\gamma_j^{\rm MF},&I_K^{\rm rev}\ne\varnothing,\\
+\infty,&I_K^{\rm rev}=\varnothing.
\end{cases}
\]
The positive margin is automatic on a fixed prefix. Let \(\mathcal A_j^N\)
be the event that the finite-population and mean-field processes make
the same pass/revision decision at each of the first \(j\)
opportunities, with \(\mathcal A_0^N=\Omega\).
For \(\tau_0=t_0\), denote the post-opportunity active and stored plans
by \(u_m^{N,j},\widehat u_{n|m}^{N,j}\), and the mean-field plans by
\(u_m^j\). Define the prefix error, with \(n=3-m\), by
\[
\begin{aligned}
\mathrm{Err}_K^N:=\max\bigl\{
&\sup_{0\le t\le\tau_K}|X_t^N-X_t|,
\ \max_m\sup_{t_0\le t\le\tau_K}|\widehat X_t^{m,N}-X_t|,\\
&\max_{\substack{0\le j\le K\\m=1,2}}
\|u_m^{N,j}-u_m^j\|_{\mathbb L(\tau_j)},
\ \max_{\substack{0\le j\le K\\m=1,2}}
\|\widehat u_{n|m}^{N,j}-u_n^j\|_{\mathbb L(\tau_j)}\bigr\}.
\end{aligned}
\]
The four terms measure empirical aggregate, local state-reconstruction,
active-plan, and locally stored opponent-plan errors, respectively.

\begin{theorem}[Finite-population robustness on a fixed prefix]
\label{thm:finite-prefix-robustness}
Assume \cref{ass:lq,ass:eq-wellposed,ass:response-wellposed,ass:event-grid,ass:protocol-observability,ass:finite-population-sampling}
and \cref{eq:finite-threshold-scaling}. For every fixed finite
chronological opportunity prefix, a constant \(C_K\), independent of
\(N_1,N_2\), satisfies, for all sufficiently large \(N_*\),
\begin{align}
\mathbb P\bigl((\mathcal A_K^N)^c\bigr)
&\le C_K/(N_*\eta_N^2),\label{eq:finite-record-probability}\\
\mathbb E\bigl[\mathbf1_{\mathcal A_K^N}(\mathrm{Err}_K^N)^2\bigr]
&\le C_K/N_* .\label{eq:finite-prefix-error}
\end{align}
Thus the revision records agree with probability tending to one, and
the mean-square implementation error on the matching prefix is
\(O(N_*^{-1})\).
\end{theorem}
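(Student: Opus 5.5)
We argue by induction over the prefix index \(j=0,1,\ldots,K\), proving jointly that
\[
\mathbb P\bigl((\mathcal A_j^N)^c\bigr)\le C_j/(N_*\eta_N^2)
\qquad\text{and}\qquad
\mathbb E\bigl[\mathbf1_{\mathcal A_j^N}(\mathrm{Err}_j^N)^2\bigr]\le C_j/N_*,
\]
where \(\mathrm{Err}_j^N\) is the prefix error truncated at \(\tau_j\). The key structural fact is that on \(\mathcal A_j^N\) the two processes make the same decisions. Every object in the finite-population system is then obtained from the mean-field one by the \emph{same} finite composition of bounded affine maps: the initializer \(\mathcal K_m^{\rm prot}\), the observer flow \cref{eq:causal-integral-observer}, the responses \(\mathfrak R_m\), and the regeneration \cref{eq:event-record-regeneration}. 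Only the inputs \(X_0^N-X_0\) and \(M^N\) differ. Linearity via \cref{eq:exact-response-increment} and the uniform bounds \cref{eq:uniform-local-response-gains} should therefore give a pathwise Lipschitz bound
\[
\mathbf1_{\mathcal A_j^N}\mathrm{Err}_j^N\le L_j\Bigl(|X_0^N-X_0|+\sup_{t\le T}|M_t^N|\Bigr).
\]
Combined with \cref{eq:finite-sampling-bound}, this yields the error estimate.

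\emph{Base step.} On \([0,t_0]\) both systems use \(u_1^0,u_2^0\), so \(X^N-X\) solves a linear SDE driven by \(M^N\). Gronwall gives \(\sup_{t\le t_0}|X^N_t-X_t|\le C(|X_0^N-X_0|+\sup|M^N|)\). The residual error \(y_m^N-y_m=H_m(X^N-X)\) has \(\mathcal Y_m\)-norm bounded by the same quantity. By \cref{thm:protocol-sufficient-observation}, the initialized pair then has error at most \(\kappa_m^{\rm prot}\) times that. Between opportunities, the differences \(X^N-X\) and \(\widehat X^{m,N}-X\) satisfy linear integral equations with bounded coefficients (the observer gain \(L_m\) is bounded), forced by \(M^N\), by the stored-plan errors through \(\mathsf B_n\), and by the active-plan errors. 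Gronwall propagates the bound to the next opportunity time.

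\emph{Inductive step at \(\tau_{j+1}\) with actor \(m\).} On \(\mathcal A_j^N\), the candidate error satisfies
\[
\|u_m^{N,\rm cand}-u_m^{\rm cand}\|\le C\,\mathrm{Err}_{j}^{N,+},
\]
where \(\mathrm{Err}_j^{N,+}\) includes the state propagation up to \(\tau_{j+1}\). Hence \(|\gamma^N_{j+1}-\gamma^{\rm MF}_{j+1}|\le C'\,\mathrm{Err}_j^{N,+}\) by the triangle inequality. A decision mismatch at step \(j+1\) can occur in two ways:
\begin{enumerate}[label=(\roman*)]
\item \(\gamma^{\rm MF}_{j+1}=0\) and \(\gamma^N_{j+1}>\eta_N\), which forces \(C'\,\mathrm{Err}\ge\eta_N\);
\item \(\gamma^{\rm MF}_{j+1}\ge\delta_K>0\) and \(\gamma^N_{j+1}\le\eta_N\), which forces \(C'\,\mathrm{Err}\ge\delta_K-\eta_N\ge\delta_K/2\) once \(N_*\) is large enough that \(\eta_N\le\delta_K/2\).
\end{enumerate}
Chebyshev on \(\mathcal A_j^N\), using the inductive \(L^2\) bound, gives
\[
\mathbb P\bigl(\mathcal A_j^N\setminus\mathcal A_{j+1}^N\bigr)\le \frac{C}{N_*\eta_N^2}+\frac{C}{N_*\delta_K^2}\le \frac{C''}{N_*\eta_N^2},
\]
since \(\eta_N\to0\). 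Summing over \(j\le K\) proves \cref{eq:finite-record-probability}.

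On \(\mathcal A_{j+1}^N\), both systems pass or both revise. In a pass, the plans are unchanged. In a revision, the new active plan and the non-actor's regenerated stored plan are affine images of already-controlled errors with bounded gains. This closes the pathwise Lipschitz bound at \(j+1\).

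\emph{Main obstacle.} The subtle point is that the Lipschitz bound holds only on the matching event. Off it, the composition of maps differs and no \(O(N_*^{-1/2})\) control is available, which is why \cref{eq:finite-prefix-error} carries the indicator. One must also check that the constants do not depend on \(\omega\), and that \(\mathcal A_{j+1}^N\subseteq\mathcal A_j^N\), so that taking the indicator before squaring is legitimate. The fixed-prefix margin \(\delta_K\) is what makes the revise-to-pass direction harmless, while the scaling \(N_*\eta_N^2\to\infty\) handles the pass-to-revise direction. Since \(C_K\) grows with \(K\), uniformity through accumulation is not claimed.
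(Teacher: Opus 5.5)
Your proposal is correct and follows essentially the same route as the paper. On the matching event, errors propagate pathwise and affinely: the initializer perturbation bound of \cref{thm:protocol-sufficient-observation}, Gronwall between opportunities, and the response increment estimate at adoptions and regenerations. The complement \((\mathcal A_K^N)^c\) is then split by first mismatch, with Chebyshev applied using \(\eta_N\) at mean-field passes and the fixed-prefix margin \(\delta_K\) at genuine revisions. Your single deterministic Lipschitz bound in \(|X_0^N-X_0|+\sup_{t\le T}|M_t^N|\) is a cleaner packaging of the paper's remark that its integral estimates are pathwise and need no martingale property conditional on record agreement.
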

\begin{proof}
\emph{1. Initialization.}
Put \(\Delta^N=X^N-X\). On \([0,t_0]\),
\(d\Delta^N=\mathsf A\Delta^Ndt+dM^N\).
The sampling bound, BDG, and Gronwall give
\(\mathbb E\sup_{t\le t_0}|\Delta_t^N|^2\le C/N_*\).
Since \(y_m^N-y_m=H_m\Delta^N\), the perturbation bound in
\cref{thm:protocol-sufficient-observation} gives the same order for
the initialized state and stored-plan errors; the active-plan errors
are initially zero.

\emph{2. Inter-event propagation.}
On \([\tau_{j-1},\tau_j]\), set
\(r_m^N=\widehat X^{m,N}-X\),
\(a_m^N=u_m^{N,\rm act}-u_m^{\rm act}\), and
\(d_{n|m}^N=\widehat u_{n|m}^{N,\rm act}-u_n^{\rm act}\).
No plan is updated between \(\tau_{j-1}\) and \(\tau_j\). On
\(\mathcal A_{j-1}^N\), their errors on this interval are controlled
by post-opportunity errors at \(\tau_{j-1}\), since restriction
cannot increase an \(L^2\) norm. Before the next update,
\begin{align}
d\Delta^N&=(\mathsf A\Delta^N+\mathsf B_1a_1^N
+\mathsf B_2a_2^N)dt+dM^N,\label{eq:finite-aggregate-error}\\
\dot r_m^N&=(\mathsf A-L_mH_m)r_m^N+\mathsf B_ma_m^N
+\mathsf B_nd_{n|m}^N+L_mH_m\Delta^N.
\label{eq:finite-observer-error}
\end{align}
Bounded coefficients and BDG/Gronwall preserve the \(O(N_*^{-1})\)
mean-square bound over this fixed interval, restricted to
\(\mathcal A_{j-1}^N\). More explicitly, the pathwise integral
estimate bounds each state supremum by the preceding state errors,
the \(L^2\) plan errors, and \(C\sup_{t\le T}|M_t^N|\).
Multiplication by \(\mathbf1_{\mathcal A_{j-1}^N}\) and
\cref{eq:finite-sampling-bound} therefore require no martingale
property conditional on record agreement.

\emph{3. Response and regeneration.}
The dependence estimate \cref{eq:response-uniform-w1infty-increment}
applies pathwise to random inputs and implies
\[
\|u_m^{N,\rm cand}-u_m^{\rm cand}\|_{\mathbb L(\tau_j)}
\le C\bigl(|r_m^N(\tau_j)|
+\|d_{n|m}^N\|_{\mathbb L(\tau_j)}\bigr).
\]
Squaring the candidate bound, multiplying by
\(\mathbf1_{\mathcal A_{j-1}^N}\), and taking expectations, the state
term is controlled by the preceding BDG/Gronwall estimate from
\cref{eq:finite-aggregate-error,eq:finite-observer-error},
and the stored-plan term by the induction hypothesis and restriction.
At a common revision, the actor adopts its candidate and the non-actor
regenerates the actor's plan; the latter bound uses
\((r_n^N,a_n^N)\) in the same response estimate. A common pass only
restricts the existing plans. Since
\(\mathcal A_j^N\subseteq\mathcal A_{j-1}^N\), these pathwise bounds
close the induction for errors multiplied by
\(\mathbf1_{\mathcal A_j^N}\). Since \(K\) is fixed, the maximum
over finitely many intervals and updates changes only the constant,
proving \cref{eq:finite-prefix-error}.
For the finite-population gap defined above, the reverse triangle
inequality gives the pre-decision estimate
\begin{equation}\label{eq:finite-gap-error}
\mathbb E\bigl[\mathbf1_{\mathcal A_{j-1}^N}
|\gamma_j^N-\gamma_j^{\rm MF}|^2\bigr]\le C_K/N_*.
\end{equation}

\emph{4. Decision consistency.}
The complement of \(\mathcal A_K^N\) is the union over \(j\le K\) of
\(\{\mathcal A_{j-1}^N\text{ holds and the }j\text{-th decision mismatches}\}\).
At a mean-field pass, \(\gamma_j^{\rm MF}=0\), so Chebyshev bounds the
probability of a first mismatch at \(j\) by
\(C_K/(N_*\eta_N^2)\). At a genuine revision,
\(\gamma_j^{\rm MF}\ge\delta_K\); once \(\eta_N<\delta_K/2\), a mismatch
requires \(|\gamma_j^N-\gamma_j^{\rm MF}|\ge\delta_K/2\), with first-mismatch
probability at most \(4C_K/(N_*\delta_K^2)\).
Summing over \(j\le K\), and taking \(\eta_N\le1\), proves
\cref{eq:finite-record-probability}.
\end{proof}

\begin{remark}\label{rem:finite-prefix-only}\label{rem:no-finite-nash}
The constant \(C_K\) is not claimed uniform as \(K\to\infty\), so
the theorem does not give a finite-population approximation through
Zeno accumulation. It concerns implementation robustness and makes
no \(\varepsilon\)-Nash claim for the revised finite-player process.
\end{remark}

\section{Conclusion}\label{sec:conclusion}

Asynchronous continuation replanning requires control of both the
information used in a response and the interaction of successive
responses. Local observations need only determine the state--plan target
that initializes the protocol. Stability, however, concerns the full
remaining plans as well as the portions physically executed. The scalar
counterexample shows why a convergent state cannot settle this second
question. Spectral stability of the limiting response cycle controls
both objects at a pre-terminal accumulation. The limiting equilibrium
starts from the state actually reached, which need not lie on the
original complete-information trajectory.

The exact local implementation connects this analysis to the available
observations and public revision record. With empirical observations,
the tolerance in the pass test addresses errors in discrete decisions
as well as plan reconstruction. The resulting robustness estimates apply
on fixed finite chronological opportunity prefixes; they are not uniform
through Zeno accumulation and give no \(\varepsilon\)-Nash result for the
revised finite-player process. Two questions remain: how to implement the
protocol from noisy or sampled observations with controlled reconstruction
and decision errors, and what additional conditions can ensure convergence
at the spectral boundary \(r(Q_Z)=1\).

\label{end:core}

\appendix
\section{Baseline Open-Loop LQ--MFG Derivation}\label{app:baseline-lq}
Set \(\mathcal A=\operatorname{diag}(A_1,A_2)\), and analogously define
\(\mathcal B,\mathcal F,\mathcal R\); let
\(\mathcal C=(C_1^\top,C_2^\top)^\top\). The stacked cost blocks are

\begin{equation}\label{eq:block-Q-definitions}
\begin{aligned}
\mathcal Q_{m\ell}
&:=\delta_{m\ell}\left(Q_{Im}+\sum_{r=1}^2Q_m^r\right)
-Q_m^\ell\Gamma_m^\ell,\\
\bar{\mathcal Q}_{m\ell}
&:=\delta_{m\ell}\left(\bar Q_{Im}+\sum_{r=1}^2\bar Q_m^r\right)
-\bar Q_m^\ell\bar\Gamma_m^\ell,
\end{aligned}
\end{equation}

\begin{equation}\label{eq:block-nu-definitions}
\nu_m:=-Q_{Im}s_m-\sum_{r=1}^2Q_m^r\eta_m^r,
\qquad
\bar\nu_m:=-\bar Q_{Im}\bar s_m
-\sum_{r=1}^2\bar Q_m^r\bar\eta_m^r,
\end{equation}

with \(\mathcal Q=(\mathcal Q_{m\ell})\),
\(\bar{\mathcal Q}=(\bar{\mathcal Q}_{m\ell})\),
\(\nu=(\nu_1^\top,\nu_2^\top)^\top\), and
\(\bar\nu=(\bar\nu_1^\top,\bar\nu_2^\top)^\top\).

\begin{proposition}[Global Riccati chart and continuation boundary problems]
\label{prop:global-no-conjugate}
Under \cref{ass:eq-wellposed}, \(P=VU^{-1}\) is the unique solution of
\begin{equation}\label{eq:global-riccati}
\left\{
\begin{aligned}
-\dot P_t
&=P_t(\mathcal A+\mathcal C)+\mathcal A^\top P_t+\mathcal Q
-P_t(\mathcal B+\mathcal F)\mathcal R^{-1}\mathcal B^\top P_t,\\
P_T&=\bar{\mathcal Q}.
\end{aligned}
\right.
\end{equation}
For every \(\tau\in[0,T]\), the global continuation boundary problem
\cref{eq:equilibrium-system} on \([\tau,T]\) has a unique solution for
every boundary state and affine forcing.
If \(U(\tau)\) is singular, the homogeneous problem has a nonzero solution
with zero state at \(\tau\), and some inhomogeneous boundary data are
unattainable.
\end{proposition}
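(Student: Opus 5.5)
The plan follows the classical Hamiltonian graph-lift argument for Riccati equations, adapted to the non-symmetric open-loop equilibrium system. \cref{eq:equilibrium-system} is presumably the stacked forward--backward system on \([\tau,T]\):
\[
\dot X=\mathsf AX-(\mathcal B+\mathcal F)\mathcal R^{-1}\mathcal B^\top Y+f,\qquad
-\dot Y=\mathcal QX+\mathcal A^\top Y+\nu,
\]
with boundary conditions \(X_\tau=x\) and \(Y_T=\bar{\mathcal Q}X_T+\bar\nu\). Here the plans are recovered as \(u_m=-R_m^{-1}B_m^\top Y^m\).

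\emph{Step 1: the Riccati chart.} Write \(\mathsf H\) for the constant Hamiltonian-type matrix in \cref{eq:global-boundary-lift}. Since \(\det U(t)\ne0\) on \([0,T]\), the product \(P=VU^{-1}\) is \(C^1\). Differentiating, and using \(\dot U=\mathsf A U-(\mathcal B+\mathcal F)\mathcal R^{-1}\mathcal B^\top V\) and \(\dot V=-\mathcal QU-\mathcal A^\top V\), gives
\[
\dot P=\dot VU^{-1}-VU^{-1}\dot UU^{-1}.
\]
This yields exactly \cref{eq:global-riccati}, with \(P_T=\bar{\mathcal Q}\).

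For uniqueness, suppose \(\tilde P\) is another \(C^1\) solution on some interval \([s,T]\). Let \(\tilde U\) solve \(\dot{\tilde U}=(\mathsf A-(\mathcal B+\mathcal F)\mathcal R^{-1}\mathcal B^\top\tilde P)\tilde U\) with \(\tilde U(T)=I\), and set \(\tilde V=\tilde P\tilde U\). Then \((\tilde U,\tilde V)\) solves the same linear terminal-value problem as \((U,V)\), so \(\tilde U=U\), \(\tilde V=V\), and hence \(\tilde P=P\). Local Lipschitz uniqueness of the quadratic ODE would give the same conclusion.

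\emph{Step 2: existence and uniqueness of the continuation problem.}
\begin{itemize}
\item \emph{Decoupling.} Given \(P\), solve the linear backward equation for the affine offset \(\phi\), with \(\phi_T=\bar\nu\), so that the ansatz \(Y=PX+\phi\) is consistent.
\item \emph{Existence.} With this ansatz, the forward equation for \(X\) with \(X_\tau=x\) is linear with bounded coefficients and has a unique solution. Substituting \(Y=PX+\phi\) then verifies both equations and both boundary conditions, which gives existence.
\item \emph{Uniqueness.} Take the difference \((\delta X,\delta Y)\) of two solutions; it solves the homogeneous system with \(\delta X_\tau=0\) and \(\delta Y_T=\bar{\mathcal Q}\delta X_T\). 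Every solution of the homogeneous system with this terminal relation has the form \((U,V)c\) for \(c=\delta X_T\), since the solution is determined by its terminal data \((c,\bar{\mathcal Q}c)\). Then \(\delta X_\tau=U(\tau)c=0\), and invertibility of \(U(\tau)\) forces \(c=0\), so the difference vanishes.
\end{itemize}
The plans depend affinely and boundedly on \((x,f,\nu)\) by variation of constants.

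\emph{Step 3: the singular case.} The same terminal-value parametrization gives the converse. If \(U(\tau)c=0\) with \(c\ne0\), then \((X,Y)=(U,V)c\) is a nonzero homogeneous solution with \(X_\tau=0\). It is nonzero because \(c\ne0\) means \(X_T=c\ne0\).

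For unattainable data, note that the inhomogeneous solutions are parametrized by \(X_T=c\in\mathbb R^{2d}\) through
\[
X_\tau=U(\tau)c+g(\tau),
\]
where \(g\) is a particular term determined by the forcing. Hence the attainable initial states form an affine translate of \(\operatorname{Ran}U(\tau)\), which is a proper subspace. Any \(x\) outside it is unattainable.

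The main obstacle is bookkeeping rather than analysis. The boundary system is not symmetric, because it involves \((\mathcal B+\mathcal F)\mathcal R^{-1}\mathcal B^\top\) and a non-symmetric \(\mathcal Q\). So I must avoid any argument that relies on symmetry or on a cost interpretation, and use only the linear graph-lift parametrization by terminal data.
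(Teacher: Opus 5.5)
Your proposal is correct and follows essentially the paper's argument. The paper likewise differentiates \(VU^{-1}\) and recovers the lift from any Riccati solution (also citing local Lipschitz uniqueness). It then parametrizes the boundary problem by terminal graph data, so that \(X_\tau=U(\tau)\xi\) plus a forcing-dependent translation, which gives the solvability criterion, the nonzero zero-boundary solution, and the unattainable data. Your decoupling-based existence step is a harmless variant, since existence also follows directly from solving \(U(\tau)\xi=x-g(\tau)\) within the same parametrization.
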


\begin{proof}
Differentiating \(VU^{-1}\) gives \cref{eq:global-riccati}. Conversely,
\[
\dot U=(\mathcal A+\mathcal C-(\mathcal B+\mathcal F)
\mathcal R^{-1}\mathcal B^\top P)U,\qquad U(T)=I,\qquad V=PU
\]
recover the lift. Riccati uniqueness follows from local Lipschitz
continuity. Terminal graph data \(\xi\) give state \(U(\tau)\xi\)
at \(\tau\); forcing adds a fixed translation. Thus invertibility
is exactly the boundary solvability criterion, while singularity gives
both a nonzero zero-boundary solution and unattainable data.
\end{proof}

\begin{theorem}[Baseline deterministic open-loop equilibrium]
\label{thm:mfe}
Under \cref{ass:lq,ass:eq-wellposed}, the complete-information deterministic
open-loop mean-field equilibrium is unique.  With
\(Y=((p^1)^\top,(p^2)^\top)^\top\), it solves
\begin{equation}\label{eq:equilibrium-system}
\left\{
\begin{aligned}
\frac d{dt}\begin{pmatrix}X_t\\Y_t\end{pmatrix}
&=
\begin{pmatrix}
\mathcal A+\mathcal C&-(\mathcal B+\mathcal F)\mathcal R^{-1}\mathcal B^\top\\
-\mathcal Q&-\mathcal A^\top
\end{pmatrix}
\begin{pmatrix}X_t\\Y_t\end{pmatrix}
-\begin{pmatrix}0\\\nu\end{pmatrix},\\
X_0&=X_0,
\qquad Y_T=\bar{\mathcal Q}X_T+\bar\nu,
\end{aligned}
\right.
\end{equation}
with \(u_m(t)=-R_m^{-1}B_m^\top p_t^m\).
Writing \(Y_t=P_tX_t+\mathcal G_t\), the affine term is
determined by
\begin{equation}\label{eq:global-G}
\dot{\mathcal G}_t
=-\bigl(\mathcal A^\top
-P_t(\mathcal B+\mathcal F)\mathcal R^{-1}\mathcal B^\top\bigr)
\mathcal G_t-\nu,
\qquad \mathcal G_T=\bar\nu.
\end{equation}
\end{theorem}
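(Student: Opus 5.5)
The plan is to derive \cref{eq:equilibrium-system} from representative-agent optimality plus population consistency, and then to get existence and uniqueness of its solution from \cref{prop:global-no-conjugate} at \(\tau=0\).

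\emph{Necessity of the system.} Fix population \(m\) and treat the aggregate \(X\) and \(\bar u^m\) as given. The representative problem \cref{eq:model-state,eq:model-cost} is a strictly convex LQ problem in \(u_m\in L^2\), since \(R_m\succ0\) and the state weights are nonnegative (\cref{ass:lq}). So its unique optimizer is characterized by the stochastic maximum principle. Because the controls are deterministic, taking expectations reduces the first-order condition to a deterministic adjoint \(p^m=\mathbb E[\text{adjoint}]\). The stationarity condition is \(R_mu_m+B_m^\top p^m=0\). The adjoint equation is
\[
-\dot p^m=A_m^\top p^m+\Bigl(Q_{Im}+\textstyle\sum_rQ_m^r\Bigr)\mathbb E x^m-\textstyle\sum_\ell Q_m^\ell(\Gamma_m^\ell z^\ell+\eta_m^\ell)-Q_{Im}s_m,
\]
with the analogous terminal condition from \(G_m\). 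Next impose consistency: \(\mathbb E x^m=z^m\) and \(\bar u^m=u_m\). Stacking over \(m\) gives exactly the blocks \(\mathcal Q,\bar{\mathcal Q},\nu,\bar\nu\) of \cref{eq:block-Q-definitions,eq:block-nu-definitions}. The mean dynamics become \(\dot X=(\mathcal A+\mathcal C)X-(\mathcal B+\mathcal F)\mathcal R^{-1}\mathcal B^\top Y\), which is \cref{eq:equilibrium-system}.

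\emph{Sufficiency.} Conversely, let \((X,Y)\) solve \cref{eq:equilibrium-system} and set \(u_m=-R_m^{-1}B_m^\top p^m\). With \(X\) frozen, \(p^m\) is the adjoint of the population-\(m\) problem evaluated at the mean path \(X^m\). By convexity the maximum-principle condition is sufficient, so \(u_m\) is that problem's unique optimizer. Its mean then reproduces \(z^m\) by forward uniqueness of the linear ODE. Hence equilibria are in bijection with solutions of the boundary problem.

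\emph{Existence and uniqueness.} \Cref{prop:global-no-conjugate} at \(\tau=0\) gives a unique solution of \cref{eq:equilibrium-system} for the boundary state \(X_0\) and the affine forcing \(\nu,\bar\nu\). So the equilibrium exists and is unique.

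\emph{Decoupling.} For the representation \(Y=PX+\mathcal G\), substitute into the adjoint equation and use the Riccati equation \cref{eq:global-riccati} to cancel every term proportional to \(X\). What remains is
\[
\dot{\mathcal G}=-\bigl(\mathcal A^\top-P(\mathcal B+\mathcal F)\mathcal R^{-1}\mathcal B^\top\bigr)\mathcal G-\nu,
\]
and the terminal condition \(Y_T=\bar{\mathcal Q}X_T+\bar\nu\) together with \(P_T=\bar{\mathcal Q}\) gives \(\mathcal G_T=\bar\nu\). This linear equation has a unique solution because \(P\) is continuous on \([0,T]\). The resulting \(Y\) then solves the boundary problem, so by uniqueness it is the equilibrium adjoint.

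The main obstacle is careful bookkeeping of the stacked cross terms. The terms \(-Q_m^\ell\Gamma_m^\ell\) must land in the \((m,\ell)\) blocks with the correct signs. One must also note the asymmetry that \(B_m^\top\) enters stationarity while \(B_m+F_m\) enters the dynamics after consistency. The probabilistic step, reducing the maximum principle to means for deterministic open-loop controls, is routine because the noise is additive.
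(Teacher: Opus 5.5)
Your proposal is correct and follows the paper's proof. Like the paper, you impose consistency in the representative mean and adjoint equations and stack them to obtain \cref{eq:equilibrium-system}, take the converse from convexity (the paper cites \cref{thm:open-loop-optimal}), get unique solvability from \cref{prop:global-no-conjugate} at \(\tau=0\), and derive \cref{eq:global-G} by substituting \(Y=PX+\mathcal G\) and cancelling the \(X\)-terms with \cref{eq:global-riccati}.

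The only difference is the representative first-order step. The paper splits off the \(u\)-independent centered part of the cost and differentiates the resulting deterministic functional. That is cleaner than appealing to the stochastic maximum principle, since the optimizer over adapted controls is a random feedback rather than the deterministic optimizer. What you actually use, correctly, is the G\^ateaux derivative in deterministic directions, giving \(R_mu_m+B_m^\top\mathbb E\xi_t^m=0\), where \(\xi^m\) is the stochastic adjoint.
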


Set \(A_P=\mathcal A+\mathcal C-(\mathcal B+\mathcal F)
\mathcal R^{-1}\mathcal B^\top P\), with transition \(\Phi(t,s)\)
and \(\Phi_t=\Phi(t,0)\). Stacking the controls as \(u\), the affine solution map
\(\mathcal S:X_0\mapsto(X,Y,u_1,u_2)\) is given by
\begin{align}
\dot X_t&=A_P(t)X_t
-(\mathcal B+\mathcal F)\mathcal R^{-1}\mathcal B^\top\mathcal G_t,
\qquad X_0\ \text{given},\label{eq:closed-baseline-forward}\\
Y_t&=P_tX_t+\mathcal G_t,\qquad
u_t=-\mathcal R^{-1}\mathcal B^\top(P_tX_t+\mathcal G_t).
\label{eq:baseline-plan-formula}
\end{align}
For an initial increment \(H\in\mathbb R^{2d}\), subtraction gives
\begin{equation}\label{eq:solution-map-sensitivity}
\begin{aligned}
X_t[X_0+H]-X_t[X_0]&=\Phi_tH,\\
Y_t[X_0+H]-Y_t[X_0]&=P_t\Phi_tH,\\
u_t[X_0+H]-u_t[X_0]
&=-\mathcal R^{-1}\mathcal B^\top P_t\Phi_tH.
\end{aligned}
\end{equation}

For fixed deterministic aggregates, put \(\widehat x^m=\mathbb E x^m\).
The representative adjoint is
\begin{equation}\label{eq:agent-adjoint}
\left\{
\begin{aligned}
dp_t^m
&=-\left[A_m^\top p_t^m+Q_{Im}(\widehat x_t^m-s_m)
+\sum_{n=1}^2Q_m^n(\widehat x_t^m-\psi_m^n(X_t))\right]dt,\\
p_T^m
&=\bar Q_{Im}(\widehat x_T^m-\bar s_m)
+\sum_{n=1}^2\bar Q_m^n
(\widehat x_T^m-\bar\psi_m^n(X_T)).
\end{aligned}
\right.
\end{equation}

\begin{theorem}[Deterministic open-loop LQ optimal control]
\label{thm:open-loop-optimal}
Under \cref{ass:lq}, for each deterministic \((X,\bar u^m)\), the
representative problem has a unique optimizer in
\(\mathcal U_m^{\mathrm{det}}\), characterized by
\(u_m^*(t)=-R_m^{-1}B_m^\top p_t^m\), where \(p^m\) solves \cref{eq:agent-adjoint} along the optimal mean state.
\end{theorem}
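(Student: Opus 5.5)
The proof follows the classical Pontryagin/convex-duality route for a strictly convex quadratic functional on the Hilbert space \(\mathcal U_m^{\mathrm{det}}=L^2([0,T];\mathbb R^{d_0})\). The aggregate path \(X\) and the mean-field input \(\bar u^m\) are held fixed. Taking expectations in \cref{eq:model-state} gives the mean dynamics
\[
\dot{\widehat x}^m=A_m\widehat x^m+B_mu_m+C_mX+F_m\bar u^m ,
\]
and the centered part \(x^m-\widehat x^m\) does not depend on \(u_m\), since the control is deterministic. The first step is therefore to expand \cref{eq:model-cost} by the bias--variance identity \(\mathbb E\|x-c\|_Q^2=\|\widehat x-c\|_Q^2+\operatorname{tr}(Q\operatorname{Cov}x)\). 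This writes \(J_m^{\mathrm{MF}}\) as a deterministic quadratic functional of \(u_m\) plus a constant independent of \(u_m\); \cref{ass:lq} (square-integrable initial state, independent of \(W^m\)) makes that constant finite.

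The second step is to show that the reduced functional \(u\mapsto\widehat J(u)\) is continuous, strictly convex and coercive. The control-to-mean-state map \(u\mapsto\widehat x^m[u]\) is affine and bounded from \(L^2\) into \(C([0,T])\), by variation of constants. The state-cost weights are nonnegative definite, so the state terms are convex, and the term \(\tfrac12\int\|u\|_{R_m}^2\) with \(R_m\succ0\) gives \(\widehat J(u)\ge c\|u\|_{L^2}^2-C(1+\|u\|_{L^2})\). A continuous, strictly convex, coercive functional on a Hilbert space has a unique minimizer, by weak lower semicontinuity together with strict convexity. This settles existence and uniqueness.

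The third step is the characterization. Compute the Gâteaux derivative in a direction \(v\): the mean-state variation \(\delta x\) solves \(\dot{\delta x}=A_m\delta x+B_mv\), \(\delta x(0)=0\). Pairing \(\delta x\) with the solution \(p^m\) of the linear backward equation \cref{eq:agent-adjoint} and integrating by parts, the state terms collapse to \(\int_0^T\langle B_m^\top p^m_t,v(t)\rangle dt\). Hence
\[
D\widehat J(u)v=\int_0^T\langle R_mu(t)+B_m^\top p_t^m,v(t)\rangle\,dt .
\]
By convexity, \(u\) is optimal if and only if this derivative vanishes for all \(v\), that is, \(u=-R_m^{-1}B_m^\top p^m\) almost everywhere. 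In particular the optimizer has a continuous representative. Equation \cref{eq:agent-adjoint} is linear with bounded coefficients and square-integrable forcing, so \(p^m\) exists uniquely and is continuous.

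The main obstacle is this integration-by-parts identity, which requires matching the adjoint forcing terms with the gradients of the quadratic costs. One subtlety is that the targets \(\psi_m^n(X)\) depend only on the fixed aggregate \(X\), not on \(\widehat x^m\), so the derivative picks up only the \(\widehat x^m\)-dependence, exactly as in \cref{eq:agent-adjoint}. I would also check the factor \(\tfrac12\) convention, so that the gradients \(Q(\widehat x-c)\) appear without a factor \(2\). Everything else is standard.
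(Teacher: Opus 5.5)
Your proposal is correct and follows essentially the same route as the paper. Both split off the centered state, whose cost contribution does not depend on the deterministic control, and both get a unique minimizer of the reduced quadratic functional from convexity with \(R_m\succ0\); the paper phrases this as a Hessian bounded below by \(\lambda_{\min}(R_m)I\) rather than via coercivity and weak lower semicontinuity. Both then integrate by parts against \cref{eq:agent-adjoint} to obtain \(D\widehat J(u)h=\int_0^T\langle R_mu+B_m^\top p^m,h\rangle\,dt\), which gives the characterization.
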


\begin{proof}
The mean equation is
\begin{equation}\label{eq:proof-mean-state}
\dot{\widehat x}=A_m\widehat x+B_mu+C_mX+F_m\bar u^m,
\qquad \widehat x_0=\mathbb E x_0^m.
\end{equation}
The centered state solves \(d(x^m-\widehat x)=A_m(x^m-\widehat x)dt+D_m dW^m\),
so its contribution to the cost is independent of deterministic \(u\).
The reduced quadratic functional on \(L^2\) has a bounded Hessian
bounded below by \(\lambda_{\min}(R_m)I\); hence it has a unique
minimizer. Integration by parts with \cref{eq:agent-adjoint} gives
\(DJ_m^{\rm MF}(u)h=\int_0^T\langle R_mu+B_m^\top p^m,h\rangle dt\),
which proves the stated characterization.
\end{proof}

\begin{proof}[Proof of \cref{thm:mfe}]
Substitute \(\widehat x^m=z^m\), \(\bar u^m=u_m\), and
\(u_m=-R_m^{-1}B_m^\top p^m\) in the mean and adjoint equations.
Stacking gives \cref{eq:equilibrium-system}; the converse follows
from \cref{thm:open-loop-optimal}. The boundary criterion gives
unique solvability, and substitution of \(Y=PX+\mathcal G\)
gives \cref{eq:global-G,eq:closed-baseline-forward,eq:baseline-plan-formula}.
\end{proof}

\section{Local Frozen-Response Derivation}
\label{app:local-frozen-details}

\subsection{Frozen boundary system and local Riccati chart}
Fix \(m\), put \(n=3-m\), and use actor-first coordinates
\(X^{[m]}=(z^m,z^n)^\top=\mathfrak P_mX\), with boundary
\(x^{[m]}=\mathfrak P_mx\), where
\begin{equation}\label{eq:global-to-local-permutation}
\mathfrak P_1=I_{2d},\qquad
\mathfrak P_2=\begin{pmatrix}0&I_d\\I_d&0\end{pmatrix}.
\end{equation}
The local blocks are
\begin{equation}\label{eq:local-frozen-coefficients}
\begin{aligned}
\mathsf A_m^{\mathrm{fr}}
&:=
\begin{pmatrix}
A_m+C_m^m&C_m^n\\
C_n^m&A_n+C_n^n
\end{pmatrix},\\
\mathsf D_m^{\mathrm{fr}}
&:=
\begin{pmatrix}
(B_m+F_m)R_m^{-1}B_m^\top\\
0
\end{pmatrix},
&
\mathsf E_m^{\mathrm{fr}}
&:=
\begin{pmatrix}
0\\B_n+F_n
\end{pmatrix},\\
\mathsf Q_m^{\mathrm{fr}}
&:=
\begin{pmatrix}
Q_{Im}+\sum_{r=1}^2Q_m^r-Q_m^m\Gamma_m^m&
-Q_m^n\Gamma_m^n
\end{pmatrix},\\
\bar{\mathsf Q}_m^{\mathrm{fr}}
&:=
\begin{pmatrix}
\bar Q_{Im}+\sum_{r=1}^2\bar Q_m^r
-\bar Q_m^m\bar\Gamma_m^m&
-\bar Q_m^n\bar\Gamma_m^n
\end{pmatrix}.
\end{aligned}
\end{equation}
\begin{proposition}[Representative optimality and population consistency]
\label{prop:local-frozen-optimality}
Under \cref{ass:lq}, fix \(\tau\in[t_0,T]\),
\(x^{[m]}\in\mathbb R^{2d}\), and \(v\in\mathbb L(\tau)\).
A pair \((X^{[m]},u_m)\in H^1\times L^2\) is representative-agent
optimal and population-\(m\) consistent with frozen opponent plan
\(v\) exactly when \(u_m=-R_m^{-1}B_m^\top y^m\) and
\begin{equation}\label{eq:frozen-fbs}
\left\{
\begin{aligned}
\dot X_t^{[m]}
&=\mathsf A_m^{\mathrm{fr}}X_t^{[m]}
-\mathsf D_m^{\mathrm{fr}}y_t^m
+\mathsf E_m^{\mathrm{fr}}v_t,\\
-\dot y_t^m
&=A_m^\top y_t^m
+\mathsf Q_m^{\mathrm{fr}}X_t^{[m]}+\nu_m,\\
X_\tau^{[m]}&=x^{[m]},\\
y_T^m&=\bar{\mathsf Q}_m^{\mathrm{fr}}X_T^{[m]}+\bar\nu_m.
\end{aligned}
\right.
\end{equation}
Thus \(\widetilde{\mathfrak R}_m(\tau,x^{[m]},v)=-R_m^{-1}B_m^\top y^m\).
\end{proposition}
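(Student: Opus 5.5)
The plan is to reduce the frozen-opponent continuation problem to the deterministic open-loop characterization of \cref{thm:open-loop-optimal}, applied on \([\tau,T]\) instead of \([0,T]\), and then to impose population-\(m\) consistency. Fix the aggregate path \(X\) on \([\tau,T]\) with \(X_\tau=x\), the opponent plan \(v\), and the population input \(\bar u^m\). The representative problem is the one of \cref{eq:model-state,eq:model-cost} restricted to the continuation interval. Its centred part contributes a control-independent constant, so the reduced functional on \(\mathbb L(\tau)\) is strictly convex with Hessian bounded below by \(\lambda_{\min}(R_m)I\). Consequently, \(u_m\) is optimal if and only if \(u_m=-R_m^{-1}B_m^\top p^m\), where \(p^m\) solves \cref{eq:agent-adjoint} on \([\tau,T]\) along the optimal mean state \(\widehat x^m\). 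I will verify this through the Gateaux derivative computed by integration by parts, exactly as in that proof, with the lower limit \(0\) replaced by \(\tau\).

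Next I impose consistency. I substitute \(\widehat x^m=z^m\) and \(\bar u^m=u_m\) into the mean equation \cref{eq:proof-mean-state}. This gives
\[
\dot z^m=(A_m+C_m^m)z^m+C_m^nz^n-(B_m+F_m)R_m^{-1}B_m^\top p^m .
\]
The opponent block evolves with its frozen plan and no adjoint feedback:
\[
\dot z^n=C_n^mz^m+(A_n+C_n^n)z^n+(B_n+F_n)v .
\]
In actor-first coordinates \(X^{[m]}=\mathfrak P_mX\), these two equations are exactly the forward line of \cref{eq:frozen-fbs} with \(\mathsf A_m^{\rm fr}\), \(\mathsf D_m^{\rm fr}\) and \(\mathsf E_m^{\rm fr}\), and with \(y^m=p^m\). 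For the backward line, I expand \(\psi_m^n(X)=\Gamma_m^nz^n+\eta_m^n\) in \cref{eq:agent-adjoint}. The \(z^m\) coefficient is \(Q_{Im}+\sum_rQ_m^r-Q_m^m\Gamma_m^m\), the \(z^n\) coefficient is \(-Q_m^n\Gamma_m^n\), and the constant is \(\nu_m\) from \cref{eq:block-nu-definitions}. Together these give \(\mathsf Q_m^{\rm fr}X^{[m]}+\nu_m\). The terminal condition is expanded in the same way using the barred data, which gives \(\bar{\mathsf Q}_m^{\rm fr}X_T^{[m]}+\bar\nu_m\).

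The argument runs in both directions. For sufficiency, given a solution of \cref{eq:frozen-fbs} with \(u_m=-R_m^{-1}B_m^\top y^m\), I treat the resulting \(X\) (transformed back to global order) and \(\bar u^m=u_m\) as exogenous. The first line then shows that \(z^m\) is the mean state generated by \(u_m\), so consistency holds. The backward line is exactly \cref{eq:agent-adjoint} along that mean state, so \(u_m\) is optimal by the sufficiency half of \cref{thm:open-loop-optimal}. For necessity, optimality gives the adjoint representation and consistency gives the forward equations, as derived above. Regularity is immediate: \(X^{[m]}\in H^1\) because the drift is in \(L^2\), and \(y^m\) is absolutely continuous.

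I expect the main obstacle to be bookkeeping rather than analysis. Three points need care. First, \(B_m^\top\) enters stationarity while \(B_m+F_m\) enters the aggregate drift, which is why \(\mathsf D_m^{\rm fr}\) is not symmetric. Second, the diagonal cost block must subtract \(Q_m^m\Gamma_m^m\) and not \(Q_m^n\Gamma_m^n\). Third, \(\mathfrak P_m\) must be handled correctly when \(m=2\). I will check each of these term by term against \cref{eq:local-frozen-coefficients}. The final identity \(\widetilde{\mathfrak R}_m(\tau,x^{[m]},v)=-R_m^{-1}B_m^\top y^m\) is then just the definition of the response.
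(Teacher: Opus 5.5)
Your proposal is correct and follows the paper's proof essentially step for step. Both arguments take stationarity from \cref{thm:open-loop-optimal} on \([\tau,T]\) with the aggregate held fixed, then substitute population-\(m\) consistency into the mean and adjoint equations, with \((B_m+F_m)u_m\) and \((B_n+F_n)v\) in the drift and the cost gradients expanded into \(\mathsf Q_m^{\rm fr}X^{[m]}+\nu_m\) and \(\bar{\mathsf Q}_m^{\rm fr}X_T^{[m]}+\bar\nu_m\), and both obtain the converse from strict convexity; your term-by-term bookkeeping just makes explicit what the paper's four-line proof leaves to the reader.
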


\begin{proof}
Representative stationarity in \cref{thm:open-loop-optimal} gives
\(u_m=-R_m^{-1}B_m^\top y^m\), with aggregate paths held fixed.
Imposing consistency makes the cost gradients
\(\mathsf Q_m^{\rm fr}X^{[m]}+\nu_m\) and
\(\bar{\mathsf Q}_m^{\rm fr}X_T^{[m]}+\bar\nu_m\).
The mean dynamics use \((B_m+F_m)u_m\) and \((B_n+F_n)v\),
giving \cref{eq:frozen-fbs}. Strict convexity proves the converse.
\end{proof}

Let
\begin{equation}\label{eq:local-frozen-lift}
\frac d{dt}
\begin{pmatrix}U_m\\V_m\end{pmatrix}
=
\begin{pmatrix}
\mathsf A_m^{\mathrm{fr}}&-\mathsf D_m^{\mathrm{fr}}\\
-\mathsf Q_m^{\mathrm{fr}}&-A_m^\top
\end{pmatrix}
\begin{pmatrix}U_m\\V_m\end{pmatrix},
\qquad
\begin{pmatrix}U_m(T)\\V_m(T)\end{pmatrix}
=
\begin{pmatrix}I_{2d}\\\bar{\mathsf Q}_m^{\mathrm{fr}}\end{pmatrix},
\end{equation}
\begin{proposition}[Local Riccati chart and boundary criterion]
\label{prop:frozen-no-conjugate}\label{prop:local-frozen-riccati}
Fix \([a,T]\subset[t_0,T]\).  The following statements are equivalent:
\begin{enumerate}[label=\textup{(\roman*)}]
\item \(U_m(t)\) is invertible for every \(t\in[a,T]\);
\item the rectangular Riccati equation
\begin{equation}\label{eq:local-frozen-riccati}
\left\{
\begin{aligned}
-\dot\Pi_m^{\mathrm{fr}}
&=A_m^\top\Pi_m^{\mathrm{fr}}
+\Pi_m^{\mathrm{fr}}\mathsf A_m^{\mathrm{fr}}
-\Pi_m^{\mathrm{fr}}\mathsf D_m^{\mathrm{fr}}
\Pi_m^{\mathrm{fr}}
+\mathsf Q_m^{\mathrm{fr}},\\
\Pi_m^{\mathrm{fr}}(T)&=\bar{\mathsf Q}_m^{\mathrm{fr}}
\end{aligned}
\right.
\end{equation}
has a solution on \([a,T]\);
\item for every \(\tau\in[a,T]\), initial state \(x\), and opponent input
\(v\in L^2([\tau,T];\R^{d_0})\), the boundary system
\cref{eq:frozen-fbs} has a unique solution.
\end{enumerate}
Under these conditions,
\[
\Pi_m^{\mathrm{fr}}(t)=V_m(t)U_m(t)^{-1}.
\]
For a fixed input \(v\), the affine adjoint component solves
\begin{equation}\label{eq:local-frozen-affine}
\left\{
\begin{aligned}
-\dot g_m^v
&=\bigl(A_m^\top-\Pi_m^{\mathrm{fr}}
\mathsf D_m^{\mathrm{fr}}\bigr)g_m^v
+\Pi_m^{\mathrm{fr}}\mathsf E_m^{\mathrm{fr}}v+\nu_m,\\
g_m^v(T)&=\bar\nu_m,
\end{aligned}
\right.
\end{equation}
and
\begin{equation}\label{eq:local-frozen-decoupling}
y_t^m=\Pi_m^{\mathrm{fr}}(t)X_t^{[m]}+g_m^v(t).
\end{equation}
The closed state equation is
\begin{equation}\label{eq:local-frozen-closed-state}
\dot X_t^{[m]}
=\bigl(\mathsf A_m^{\mathrm{fr}}
-\mathsf D_m^{\mathrm{fr}}\Pi_m^{\mathrm{fr}}(t)\bigr)X_t^{[m]}
-\mathsf D_m^{\mathrm{fr}}g_m^v(t)
+\mathsf E_m^{\mathrm{fr}}v_t.
\end{equation}
\end{proposition}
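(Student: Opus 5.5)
The plan is the standard graph-lift (Radon) argument for Riccati equations, adapted to the rectangular, non-symmetric structure of \cref{eq:local-frozen-lift}. I will prove (i)\(\Rightarrow\)(ii)\(\Rightarrow\)(iii)\(\Rightarrow\)(i) and read off the decoupling formulas along the way.

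\emph{(i)\(\Rightarrow\)(ii).} Set \(\Pi:=V_mU_m^{-1}\) on \([a,T]\). Differentiating, using \(\frac d{dt}U_m^{-1}=-U_m^{-1}\dot U_mU_m^{-1}\) and substituting the lift equations, gives
\[
\dot\Pi=-\mathsf Q_m^{\rm fr}-A_m^\top\Pi-\Pi\mathsf A_m^{\rm fr}+\Pi\mathsf D_m^{\rm fr}\Pi ,
\]
which is \cref{eq:local-frozen-riccati}. The terminal value is \(\Pi(T)=\bar{\mathsf Q}_m^{\rm fr}\).

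\emph{(ii)\(\Rightarrow\)(iii).} Given a Riccati solution \(\Pi\), I first solve the linear equation \cref{eq:local-frozen-affine} for \(g_m^v\) backward. It has bounded coefficients and \(L^2\) forcing \(\Pi\mathsf E_m^{\rm fr}v\), so it has a unique absolutely continuous solution. Next I solve the closed state equation \cref{eq:local-frozen-closed-state} forward from \(x\) and define \(y\) by \cref{eq:local-frozen-decoupling}. A direct differentiation shows that \((X^{[m]},y)\) solves \cref{eq:frozen-fbs}: the quadratic Riccati term cancels the \(\mathsf D_m^{\rm fr}y\) feedback, and the \(v\)-terms cancel through the \(g\)-equation. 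This gives existence.

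For uniqueness, take the difference \((\delta X,\delta y)\) of two solutions and put \(e:=\delta y-\Pi\delta X\). Then \(e\) satisfies the homogeneous linear equation \(-\dot e=(A_m^\top-\Pi\mathsf D_m^{\rm fr})e\) with \(e(T)=0\), since \(\delta y(T)=\bar{\mathsf Q}_m^{\rm fr}\delta X(T)=\Pi(T)\delta X(T)\). Hence \(e\equiv0\). It follows that \(\delta X\) solves the closed homogeneous state equation with \(\delta X(\tau)=0\), so \(\delta X=0\) and therefore \(\delta y=0\).

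\emph{(iii)\(\Rightarrow\)(i).} I argue by contraposition. Suppose \(U_m(\tau)\xi=0\) for some \(\tau\in[a,T]\) and \(\xi\ne0\). The column \((U_m\xi,V_m\xi)\) solves the homogeneous version of \cref{eq:frozen-fbs} (take \(v=0\) and \(\nu_m=\bar\nu_m=0\)), with zero state at \(\tau\) and terminal condition \(y_T=\bar{\mathsf Q}_m^{\rm fr}X_T\). This solution is nonzero because \(U_m(T)\xi=\xi\ne0\). Adding it to any solution of the inhomogeneous problem with \(x=0\) and \(v=0\) produces a second solution, contradicting uniqueness. If instead no inhomogeneous solution exists, (iii) already fails.

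The one point needing care is that (iii) must be applied at the same \(\tau\) where \(U_m\) degenerates, which is why (iii) is stated for every \(\tau\in[a,T]\).

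\emph{Identification and the main obstacle.} Under the three equivalent conditions, the Riccati solution on \([a,T]\) is unique by local Lipschitz continuity, so it coincides with \(V_mU_m^{-1}\). The main obstacle is the rectangular shape of \(\Pi\), which is \(d\times 2d\). The chart \(V_mU_m^{-1}\) and the conjugation in the uniqueness step must be written with the correct left and right multiplications; I would check these dimensions explicitly rather than borrow symmetric Riccati identities. No symmetry or sign of \(\Pi\) is used anywhere, only linearity, so the argument goes through.
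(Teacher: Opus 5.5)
Your proof is correct, and its first step, differentiating \(V_mU_m^{-1}\) to get (i)\(\Rightarrow\)(ii), is the same as the paper's; the rest is organized differently. The paper proves (ii)\(\Rightarrow\)(i) directly: it solves \(\dot U_m=(\mathsf A_m^{\rm fr}-\mathsf D_m^{\rm fr}\Pi_m^{\rm fr})U_m\) with \(U_m(T)=I\) and sets \(V_m=\Pi_m^{\rm fr}U_m\), so the state block is a transition matrix and hence invertible. It then obtains (i)\(\Leftrightarrow\)(iii) by parametrizing every solution of the boundary system by its terminal state \(\xi=X_T\). At a given \(\tau\) this reduces the boundary problem to the square linear equation \(U_m(\tau)\xi=x-p(\tau)\), where \(p\) is a fixed particular solution carrying \(v,\nu_m,\bar\nu_m\). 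You instead close the cycle through (iii). You use the sweep (invariant-embedding) construction for existence and the error variable \(e=\delta y-\Pi_m^{\rm fr}\delta X\) for uniqueness, and beyond (i)\(\Rightarrow\)(ii) you use the lift only in the contrapositive (iii)\(\Rightarrow\)(i). Your route builds the decoupled description into the existence proof. Combined with uniqueness, the equation for \(g_m^v\), the relation \(y^m=\Pi_m^{\rm fr}X^{[m]}+g_m^v\), and the closed state equation describe the unique solution, with no separate substitution step. The paper's route gives a finer, pointwise-in-\(\tau\) statement: unique solvability at a single \(\tau\) is equivalent to invertibility of \(U_m(\tau)\) alone, and a singular \(U_m(\tau)\) produces both nonuniqueness and unattainable boundary data. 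Your (ii)\(\Rightarrow\)(iii) needs the Riccati solution on all of \([\tau,T]\), and your contrapositive exhibits only nonuniqueness; both suffice for the interval statement as posed.
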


\begin{proof}
As in \cref{prop:global-no-conjugate}, differentiation of
\(V_mU_m^{-1}\) gives the Riccati equation. Conversely,
\(\dot U_m=(\mathsf A_m^{\rm fr}-\mathsf D_m^{\rm fr}\Pi_m^{\rm fr})U_m\),
\(U_m(T)=I\), and \(V_m=\Pi_m^{\rm fr}U_m\) recover the lift.
Homogeneous terminal graph data \(\xi\) give state \(U_m(\tau)\xi\);
\(v,\nu_m,\bar\nu_m\) add a fixed translation. This proves all
three equivalences, including failure of uniqueness when \(U_m(\tau)\)
is singular. Substitution of \cref{eq:local-frozen-decoupling}
gives the affine and closed state equations.
\end{proof}

The opponent input is exogenous, so this local chart is independent
of the global equilibrium chart.

\subsection{Affine response formulas}

Set \(\mathsf F_m=A_m^\top-\Pi_m^{\rm fr}\mathsf D_m^{\rm fr}\)
and \(\mathsf A_m^{\rm cl}=\mathsf A_m^{\rm fr}-\mathsf D_m^{\rm fr}\Pi_m^{\rm fr}\).
Let \(\Lambda_m(t,s)\), \(t\le s\), and
\(\Phi_m^{\mathrm{cl}}(t,s)\), \(s\le t\), be defined by
\[
\partial_t\Lambda_m(t,s)=-\mathsf F_m(t)\Lambda_m(t,s),
\quad \Lambda_m(s,s)=I_d,
\]
\[
\partial_t\Phi_m^{\mathrm{cl}}(t,s)
=\mathsf A_m^{\mathrm{cl}}(t)\Phi_m^{\mathrm{cl}}(t,s),
\quad \Phi_m^{\mathrm{cl}}(s,s)=I_{2d}.
\]
For \(t\in[\tau,T]\), set
\begin{align}
g_m^0(t)
&:=\Lambda_m(t,T)\bar\nu_m
+\int_t^T\Lambda_m(t,s)\nu_m\,ds,
\label{eq:local-frozen-g0}\\
(\mathcal L_m^\tau v)(t)
&:=\int_t^T\Lambda_m(t,s)\Pi_m^{\mathrm{fr}}(s)
\mathsf E_m^{\mathrm{fr}}v(s)\,ds,
\label{eq:local-frozen-input-adjoint}\\
X_m^x(t)
&:=\Phi_m^{\mathrm{cl}}(t,\tau)x^{[m]},
\label{eq:local-frozen-state-part}\\
X_m^v(t)
&:=\int_\tau^t\Phi_m^{\mathrm{cl}}(t,s)
\left[\mathsf E_m^{\mathrm{fr}}v(s)
-\mathsf D_m^{\mathrm{fr}}(\mathcal L_m^\tau v)(s)\right]ds,
\label{eq:local-frozen-input-state}\\
X_m^0(t)
&:=-\int_\tau^t\Phi_m^{\mathrm{cl}}(t,s)
\mathsf D_m^{\mathrm{fr}}g_m^0(s)\,ds.
\label{eq:local-frozen-affine-state}
\end{align}
The locally ordered state gain and the opponent-input and affine terms are
\begin{equation}\label{eq:explicit-response-gains}
\begin{aligned}
(\widetilde K_m^\tau x^{[m]})(t)
&=-R_m^{-1}B_m^\top\Pi_m^{\mathrm{fr}}(t)X_m^x(t),\\
(\mathcal J_m^\tau v)(t)
&=-R_m^{-1}B_m^\top
\left[\Pi_m^{\mathrm{fr}}(t)X_m^v(t)
+(\mathcal L_m^\tau v)(t)\right],\\
h_m^\tau(t)
&=-R_m^{-1}B_m^\top
\left[\Pi_m^{\mathrm{fr}}(t)X_m^0(t)+g_m^0(t)\right].
\end{aligned}
\end{equation}

\begin{proof}[Proof of \cref{thm:frozen-response}]
Variation of constants gives \(g_m^v=g_m^0+\mathcal L_m^\tau v\)
and \(X^{[m]}=X_m^x+X_m^v+X_m^0\). The control law yields
\cref{eq:explicit-response-gains} and
\(K_m^\tau=\widetilde K_m^\tau\mathfrak P_m\).
The uniform bounds follow from the next lemma and its difference
estimate, using \(\|f\|_2\le\sqrt{T-t_0}\|f\|_\infty\).
\end{proof}

\begin{lemma}[Uniform regularity of continuation responses]
\label{lem:response-uniform-w1infty}
Under \cref{ass:lq,ass:response-wellposed}, there is a constant
\(C_{\mathrm{reg}}<\infty\), depending only on the fixed model data and
\([t_0,T]\), such that, for \(m=1,2\), \(t_0\le\tau<T\),
\(x\in\mathbb R^{2d}\), and \(v\in\mathbb L(\tau)\), the response has a
\(C^1\) representative and
\begin{equation}\label{eq:response-uniform-w1infty}
 \|\mathfrak R_m(\tau,x,v)\|_{W^{1,\infty}([\tau,T])}
 \le C_{\mathrm{reg}}\bigl(1+|x|+\|v\|_{\mathbb L(\tau)}\bigr).
\end{equation}
Here \(\|f\|_{W^{1,\infty}}=\|f\|_\infty+\|f'\|_\infty\).
All norms are on the native continuation interval. At \(\tau=T\),
the zero-length plan convention applies.
\end{lemma}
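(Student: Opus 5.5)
The plan is to work directly from the explicit formulas \cref{eq:explicit-response-gains} and the decoupled representation of \cref{prop:local-frozen-riccati}. Each term there is built from a few fixed objects: the Riccati solution \(\Pi_m^{\rm fr}\), the transition matrices \(\Lambda_m\) and \(\Phi_m^{\rm cl}\), and the constant matrices \(\mathsf D_m^{\rm fr},\mathsf E_m^{\rm fr},R_m^{-1}B_m^\top\). The first step is to get uniform bounds on these objects. By \cref{ass:response-wellposed}, \(U_m\) is continuous and invertible on the compact interval \([t_0,T]\), so \(\Pi_m^{\rm fr}=V_mU_m^{-1}\) is \(C^1\) with \(\sup|\Pi_m^{\rm fr}|+\sup|\dot\Pi_m^{\rm fr}|\le C\). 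The Riccati equation \cref{eq:local-frozen-riccati} controls the derivative. It follows that \(\mathsf F_m\) and \(\mathsf A_m^{\rm cl}\) are bounded and continuous, and Gronwall gives \(|\Lambda_m(t,s)|+|\Phi_m^{\rm cl}(t,s)|\le C\) uniformly for \(t_0\le t,s\le T\). The key point is that all these constants are independent of \(\tau\), because every object is defined on the whole interval \([t_0,T]\) and the boundary \(\tau\) only enters as a lower integration limit.

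Next I bound the sup norms term by term, using Cauchy--Schwarz on intervals of length at most \(T-t_0\).
\begin{itemize}
\item \(|g_m^0(t)|\le C\).
\item \(|(\mathcal L_m^\tau v)(t)|\le C\sqrt{T-t_0}\,\|v\|_{\mathbb L(\tau)}\).
\item \(|X_m^x(t)|\le C|x|\), since \(\mathfrak P_m\) is an isometry.
\item \(|X_m^v(t)|\le C\int_\tau^t(|v|+|\mathcal L_m^\tau v|)\le C\|v\|_{\mathbb L(\tau)}\).
\item \(|X_m^0(t)|\le C\).
\end{itemize}
Substituting these into \cref{eq:explicit-response-gains} gives \(\|\mathfrak R_m(\tau,x,v)\|_{L^\infty([\tau,T])}\le C(1+|x|+\|v\|)\). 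All of these functions are absolutely continuous in \(t\), so the response is at least Lipschitz.

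For the derivative, I differentiate \(u=-R_m^{-1}B_m^\top y^m\) using the adjoint equation in \cref{eq:frozen-fbs}:
\[
\dot u=R_m^{-1}B_m^\top\bigl(A_m^\top y^m+\mathsf Q_m^{\rm fr}X^{[m]}+\nu_m\bigr).
\]
Here \(y^m=\Pi_m^{\rm fr}X^{[m]}+g_m^v\), and \(X^{[m]}\) and \(g_m^v\) have just been bounded in sup norm. So \(\|\dot u\|_\infty\le C(1+|x|+\|v\|)\). Moreover, \(y^m\) solves an ODE whose right-hand side is continuous in \(t\) and contains no \(v\) term, since \(v\) enters only the state equation. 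Hence \(y^m\in C^1\), and therefore \(u\) has a \(C^1\) representative even though \(v\) is only in \(L^2\). This adjoint route is the step that needs care. Differentiating the formula for \(\mathcal J_m^\tau v\) directly would produce a pointwise \(v(t)\) term from \(\dot X_m^v\), and that term would have to be seen to cancel against the term coming from \(\dot{\mathcal L}_m^\tau v\). Working with the adjoint equation avoids this cancellation entirely. The case \(\tau=T\) is vacuous under the zero-length convention.

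The main obstacle I expect is only bookkeeping: checking that no constant depends on \(\tau\). This is secured by defining \(\Lambda_m\), \(\Phi_m^{\rm cl}\) and \(\Pi_m^{\rm fr}\) once on \([t_0,T]\) and bounding them there. The resulting \(C_{\rm reg}\) then depends only on the model data and on \(T-t_0\).
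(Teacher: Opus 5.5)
Your proposal is correct and takes essentially the same route as the paper. You bound \(\Pi_m^{\rm fr}\), \(\Lambda_m\) and \(\Phi_m^{\rm cl}\) uniformly on \([t_0,T]\), bound each term of the explicit gain formulas in sup norm via \(\|v\|_1\le\sqrt{T-t_0}\,\|v\|_2\), and then use the adjoint equation to get the continuous derivative \(\dot u_m=R_m^{-1}B_m^\top(A_m^\top y^m+\mathsf Q_m^{\rm fr}X^{[m]}+\nu_m)\) without differentiating \(v\).

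One small slip: your aside that absolute continuity of the pieces already makes the response Lipschitz does not follow. The terms \(X_m^v\) and \(\mathcal L_m^\tau v\) individually have only \(L^2\) derivatives. Your subsequent adjoint argument supplies exactly this Lipschitz property, so the proof is unaffected.
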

\begin{proof}
The local chart and both transition matrices are uniformly bounded on
their compact time domains. The displayed formulas and
\(\|v\|_1\le\sqrt{T-t_0}\|v\|_2\) give, uniformly in \(\tau\),
\[
\|g_m^0\|_\infty\le C,\quad
\|\mathcal L_m^\tau v\|_\infty+\|X_m^v\|_\infty\le C\|v\|_2,
\quad \|X_m^x\|_\infty+\|X_m^0\|_\infty\le C(1+|x|).
\]
Hence \(\|X^{[m]}\|_\infty+\|y^m\|_\infty
\le C(1+|x|+\|v\|_2)\). The state has an \(L^2\) derivative,
so it is continuous; the adjoint equation in \cref{eq:frozen-fbs}
then gives a continuous derivative and
\[
\dot u_m=R_m^{-1}B_m^\top
(A_m^\top y^m+\mathsf Q_m^{\rm fr}X^{[m]}+\nu_m).
\]
This proves the \(C^1\) assertion and the uniform derivative bound,
without differentiating \(v\) or using a shrinking-interval
Sobolev constant.
\end{proof}

Subtracting two copies of the frozen boundary system
\cref{eq:frozen-fbs} at the same boundary removes the affine
offsets and also gives the uniform dependence estimate
\begin{equation}\label{eq:response-uniform-w1infty-increment}
\begin{aligned}
 &\|\mathfrak R_m(\tau,x,v)-\mathfrak R_m(\tau,x',v')\|_{W^{1,\infty}([\tau,T])}\le C_{\mathrm{reg}}
       \bigl(|x-x'|+\|v-v'\|_{\mathbb L(\tau)}\bigr).
\end{aligned}
\end{equation}

\subsection{Compactness and a coefficient-level gain bound}

\begin{proposition}[Response compactness]
\label{prop:response-operator-compact}
Under \cref{ass:lq,ass:response-wellposed}, for every
\(m\in\{1,2\}\) and \(\tau\in[t_0,T]\),
\[
\mathcal J_m^\tau:\mathbb L(\tau)\longrightarrow\mathbb L(\tau)
\]
is Hilbert--Schmidt and hence compact.  There is a coefficient-dependent
constant \(C_m^{\mathrm{HS}}<\infty\), uniform in \(\tau\), such that
\begin{equation}\label{eq:response-short-horizon-bound}
\|\mathcal J_m^\tau\|
\le\|\mathcal J_m^\tau\|_{\mathrm{HS}}
\le C_m^{\mathrm{HS}}\|\mathsf E_m^{\mathrm{fr}}\|(T-\tau).
\end{equation}
Consequently
\begin{equation}\label{eq:cycle-short-horizon-bound}
r(\mathcal J_q^\tau\mathcal J_p^\tau)
\le C_p^{\mathrm{HS}}C_q^{\mathrm{HS}}
\|\mathsf E_p^{\mathrm{fr}}\|
\|\mathsf E_q^{\mathrm{fr}}\|(T-\tau)^2.
\end{equation}
Thus sufficiently short remaining horizons are spectrally stable.
Uniform conclusions for parameter families additionally require uniform
response-chart bounds.
\end{proposition}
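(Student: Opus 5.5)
We will show that \(\mathcal J_m^\tau\) is an integral operator whose kernel is bounded uniformly in \(\tau\) and factors through \(\mathsf E_m^{\mathrm{fr}}\), and then read off the three displayed bounds. Start from the explicit formula \cref{eq:explicit-response-gains}: \((\mathcal J_m^\tau v)(t)=-R_m^{-1}B_m^\top[\Pi_m^{\mathrm{fr}}(t)X_m^v(t)+(\mathcal L_m^\tau v)(t)]\). By \cref{eq:local-frozen-input-adjoint}, \(\mathcal L_m^\tau\) is an integral operator with kernel \(\ell(t,s)=\mathbf 1_{\{s\ge t\}}\Lambda_m(t,s)\Pi_m^{\mathrm{fr}}(s)\mathsf E_m^{\mathrm{fr}}\). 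Substituting this into \cref{eq:local-frozen-input-state} and exchanging the order of integration (Fubini, all integrands bounded on \([\tau,T]^2\)), \(X_m^v(t)=\int_\tau^T k_X(t,s)v(s)\,ds\) with
\[
k_X(t,s)=\mathbf 1_{\{s\le t\}}\Phi_m^{\mathrm{cl}}(t,s)\mathsf E_m^{\mathrm{fr}}
-\int_\tau^{\min(t,s)}\Phi_m^{\mathrm{cl}}(t,r)\mathsf D_m^{\mathrm{fr}}\Lambda_m(r,s)\Pi_m^{\mathrm{fr}}(s)\mathsf E_m^{\mathrm{fr}}\,dr .
\]
Thus \(\mathcal J_m^\tau\) has kernel \(k_m^\tau(t,s)=-R_m^{-1}B_m^\top[\Pi_m^{\mathrm{fr}}(t)k_X(t,s)+\ell(t,s)]\), and every term ends with the right factor \(\mathsf E_m^{\mathrm{fr}}\).

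Under \cref{ass:response-wellposed}, \(\Pi_m^{\mathrm{fr}}=V_mU_m^{-1}\) is continuous on the compact interval \([t_0,T]\) (\cref{prop:local-frozen-riccati}). So \(\Pi_m^{\mathrm{fr}}\), \(\mathsf F_m\) and \(\mathsf A_m^{\mathrm{cl}}\) are uniformly bounded, and Gronwall bounds \(\Lambda_m\) and \(\Phi_m^{\mathrm{cl}}\) on the compact triangles. The one subtle point is uniformity in \(\tau\). The transition matrices and \(\Pi_m^{\mathrm{fr}}\) are defined on all of \([t_0,T]\) independently of \(\tau\), and \(\tau\) enters only through the integration domain. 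Bounding the \(dr\)-integral by \((T-t_0)\sup\|\cdot\|\) therefore gives a pointwise estimate \(|k_m^\tau(t,s)|\le C_m^{\mathrm{HS}}\|\mathsf E_m^{\mathrm{fr}}\|\) on \([\tau,T]^2\), with \(C_m^{\mathrm{HS}}\) independent of \(\tau\).

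Since the kernel is bounded on a bounded square, it lies in \(L^2\). Hence \(\mathcal J_m^\tau\) is Hilbert--Schmidt with
\[
\|\mathcal J_m^\tau\|_{\mathrm{HS}}^2=\int_\tau^T\!\!\int_\tau^T|k_m^\tau|^2\,ds\,dt\le (C_m^{\mathrm{HS}}\|\mathsf E_m^{\mathrm{fr}}\|)^2(T-\tau)^2,
\]
using the Frobenius norm of the matrix kernel (equivalent to the operator norm up to a dimension constant absorbed into \(C_m^{\mathrm{HS}}\)). Hilbert--Schmidt operators are compact, and \(\|\cdot\|\le\|\cdot\|_{\mathrm{HS}}\), which gives \cref{eq:response-short-horizon-bound}.

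For \cref{eq:cycle-short-horizon-bound}, use \(r(Q)\le\|Q\|\) together with submultiplicativity: \(r(\mathcal J_q^\tau\mathcal J_p^\tau)\le\|\mathcal J_q^\tau\|\|\mathcal J_p^\tau\|\), which is at most the product of the two bounds. Complexification does not change the operator norm of the real integral operator on \(L^2\), so the spectral-radius bound carries over. The final sentence follows because the right-hand side tends to \(0\) as \(\tau\uparrow T\). The step that needs care is the Fubini rearrangement together with uniformity in \(\tau\), so that the constants do not depend on the integration window. Smallness in \(T-\tau\) comes only from the area of the square; no sharper kernel estimate is needed.
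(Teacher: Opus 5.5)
Your proposal is correct and follows essentially the same route as the paper: you use Fubini to write \(\mathcal J_m^\tau\) as an integral operator whose kernel matches the paper's \(j_\tau\) in \cref{eq:extended-response-kernel}, bound that kernel by \(C_m^{\mathrm{HS}}\|\mathsf E_m^{\mathrm{fr}}\|\) uniformly on \([\tau,T]^2\) via the \(\tau\)-independent chart and transition bounds, integrate over the square for the Hilbert--Schmidt estimate, and apply \(r(AB)\le\|A\|\|B\|\) for the cycle bound. One small refinement: justify Fubini by \(v\in L^2\subset L^1\) on the finite interval rather than by "all integrands bounded", since the kernel is bounded but \(v\) need not be; this is exactly how the paper phrases it.
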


\begin{proof}
Substitution of \cref{eq:local-frozen-input-adjoint,eq:local-frozen-input-state}
in \cref{eq:explicit-response-gains} and Fubini give the kernel
\(j_\tau\) displayed in \cref{eq:extended-response-kernel}.
Fubini is valid since the coefficients are bounded and \(L^2\subset L^1\)
on the finite interval. The local chart and transition bounds give
\(\|j_\tau(t,r)\|_{\rm F}\le C_m^{\mathrm{HS}}\|\mathsf E_m^{\rm fr}\|\),
uniformly on \([\tau,T]^2\). Integration on that square proves
\cref{eq:response-short-horizon-bound}; likewise,
\begin{equation}\label{eq:adjoint-hs-bound}
\|\mathcal L_m^\tau\|_{\rm HS}
\le C\|\mathsf E_m^{\rm fr}\|(T-\tau).
\end{equation}
Hilbert--Schmidt operators are compact, and
\(r(AB)\le\|A\|\|B\|\) gives \cref{eq:cycle-short-horizon-bound}.
\end{proof}

\section{Continuity of the response at a moving boundary}
\label{app:boundary-continuity}

\begin{proposition}[Norm continuity of zero-extended LQ responses]
\label{prop:response-boundary-continuity}
Under \cref{ass:lq,ass:response-wellposed}, as \(\tau\) varies on
\([t_0,T]\), the maps \(\widehat K_m^\tau,\widehat J_m^\tau\) and the
affine term \(\widehat h_m^\tau\) are continuous in
\(\mathcal L(\mathbb R^{2d},\mathcal H)\), the Hilbert--Schmidt class on
\(\mathcal H\), and \(\mathcal H\), respectively. In particular,
\(\widehat J_m^{\tau_k}\to\widehat J_m^\tau\) in operator norm whenever
\(\tau_k\to\tau\).
\end{proposition}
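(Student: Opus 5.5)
I would prove the continuity by writing each zero-extended operator as an integral operator on the fixed square \([t_0,T]^2\) whose kernel is the native kernel times indicator functions. Then norm continuity follows from continuity of the kernel in \(L^2\) of the square, since the Hilbert--Schmidt norm dominates the operator norm. The starting point is that the rectangular Riccati solution \(\Pi_m^{\rm fr}\) lives on all of \([t_0,T]\) under \cref{ass:response-wellposed} and does not depend on \(\tau\). The same holds for \(\Lambda_m(t,s)\) and \(\Phi_m^{\rm cl}(t,s)\), which are continuous and uniformly bounded on their compact domains. Only the lower limit \(\tau\) in \cref{eq:local-frozen-input-state,eq:local-frozen-affine-state,eq:local-frozen-state-part} and the domain of definition depend on \(\tau\).

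\emph{Step 1: the \(\widehat J\) term.} Substituting \cref{eq:local-frozen-input-adjoint} into \cref{eq:local-frozen-input-state} and using Fubini (as in \cref{prop:response-operator-compact}), I would write \((\mathcal J_m^\tau v)(t)=\int_\tau^T j_\tau(t,r)v(r)\,dr\). The kernel has the form
\[
j_\tau(t,r)=k_1(t,r)+\mathbf 1_{\{r\ge \tau\}}\,\mathbf 1_{\{r\le t\}}k_2(t,r)+\int_\tau^{\min(t,r)}k_3(t,s,r)\,ds ,
\]
where \(k_1,k_2,k_3\) are bounded measurable functions built from \(B_m,R_m^{-1},\Pi_m^{\rm fr},\Phi_m^{\rm cl},\Lambda_m,\mathsf D_m^{\rm fr},\mathsf E_m^{\rm fr}\), all independent of \(\tau\). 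The zero-extended operator \(\widehat J_m^\tau\) then has kernel \(\hat j_\tau(t,r)=\mathbf 1_{[\tau,T]}(t)\mathbf 1_{[\tau,T]}(r)j_\tau(t,r)\) on \([t_0,T]^2\). For \(\tau'\to\tau\), I would bound \(\|\hat j_{\tau'}-\hat j_\tau\|_{L^2([t_0,T]^2)}\) by two contributions:
\begin{enumerate}[label=\textup{(\roman*)}]
\item the strips where exactly one of the indicators differs, with area \(O(|\tau'-\tau|)\) and a uniformly bounded kernel;
\item the common region, where only the inner integral changes, by at most \(C|\tau'-\tau|\) pointwise.
\end{enumerate}
Together these give \(\|\widehat J_m^{\tau'}-\widehat J_m^\tau\|_{\rm HS}\le C|\tau'-\tau|^{1/2}\), and hence operator-norm continuity.

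\emph{Step 2: the \(\widehat K\) and \(\widehat h\) terms.} For \(\widehat K\), the kernel \(t\mapsto\mathbf 1_{[\tau,T]}(t)(-R_m^{-1}B_m^\top\Pi_m^{\rm fr}(t)\Phi_m^{\rm cl}(t,\tau))\mathfrak P_m\) is a bounded matrix function. Continuity of \(\Phi_m^{\rm cl}\) in its second argument, together with the strip estimate, gives continuity in \(L^2([t_0,T];\mathcal L(\R^{2d},\R^{d_0}))\). This norm dominates \(\|\widehat K_m^{\tau'}-\widehat K_m^\tau\|\) because \(\R^{2d}\) is finite dimensional. For \(\widehat h\), the term \(g_m^0\) is \(\tau\)-independent, and \(X_m^0\) changes only through its lower limit by \(O(|\tau'-\tau|)\) in sup norm. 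Adding the strip term gives continuity in \(\mathcal H\).

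\emph{Main obstacle.} The delicate point is the endpoint \(\tau=T\), together with ensuring that no constant degenerates as the interval shrinks. I expect this to be harmless because every estimate above uses only sup bounds of the \(\tau\)-independent charts on \([t_0,T]\) and measures of strips, with no Sobolev constants on \([\tau,T]\). At \(\tau=T\) the zero-length convention gives zero operators, and the strip bound still gives convergence to zero. I would also state explicitly that Fubini is justified by the boundedness of \(k_3\).
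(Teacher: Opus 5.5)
Your proposal is correct and takes essentially the same route as the paper. Each zero-extended map is represented by a kernel on the fixed square $[t_0,T]^2$, built from the $\tau$-independent charts $\Pi_m^{\rm fr}$, $\Lambda_m$, and $\Phi_m^{\rm cl}$, so that only the indicators and the lower integration limit depend on $\tau$. The one difference is that you estimate the boundary strips and the lower-endpoint change explicitly, which gives an $O(|\tau'-\tau|^{1/2})$ modulus in the Hilbert--Schmidt and $L^2$ norms; the paper instead uses almost-everywhere convergence of the kernels together with dominated convergence.
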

\begin{proof}
Fix \(m\), suppress its subscript, and use the local coefficients in
\cref{eq:local-frozen-coefficients} and the transition matrices
\(\Lambda_m,\Phi_m^{\mathrm{cl}}\) in
\cref{eq:explicit-response-gains}. On the fixed square \([t_0,T]^2\), the
matrix kernel of \(\widehat J^\tau\) is
\begin{equation}\label{eq:extended-response-kernel}
\begin{aligned}
j_\tau(t,r)=&-\mathbf1_{\{t,r\ge\tau\}}R_m^{-1}B_m^\top
\bigg[\Pi^{\rm fr}(t)\bigg(
\mathbf1_{\{r\le t\}}\Phi^{\rm cl}(t,r)\mathsf E^{\rm fr}-\int_\tau^{\min\{t,r\}}\Phi^{\rm cl}(t,s)\mathsf D^{\rm fr}\\
&
\Lambda(s,r)\Pi^{\rm fr}(r)\mathsf E^{\rm fr}\,ds\bigg)+\mathbf1_{\{t\le r\}}\Lambda(t,r)\Pi^{\rm fr}(r)\mathsf E^{\rm fr}
\bigg].
\end{aligned}
\end{equation}
Define this kernel to be zero unless \(t,r\ge\tau\).
The formula follows from \cref{eq:explicit-response-gains} by Fubini.
The local chart and transition matrices are bounded on compact time
domains, giving one uniform Frobenius bound for \(j_\tau\).
If \(\tau_k\to\tau\), then \(j_{\tau_k}\to j_\tau\) almost
everywhere on the fixed square: only its boundary lines are excluded,
and the integral is continuous in its lower endpoint. Dominated
convergence therefore gives
\[
\|\widehat J^{\tau_k}-\widehat J^\tau\|_{\rm HS}^2
=\int_{t_0}^T\int_{t_0}^T
\|j_{\tau_k}(t,r)-j_\tau(t,r)\|_{\rm F}^2\,dr\,dt\longrightarrow0.
\]
The zero-extended state gain \(\widehat K_m^\tau\) is multiplication of a finite-dimensional
vector by the matrix-valued function
\[
k_\tau(t)=-\mathbf1_{\{t\ge\tau\}}R_m^{-1}B_m^\top
\Pi^{\rm fr}(t)\Phi^{\rm cl}(t,\tau)\mathfrak P_m.
\]
The same argument gives \(L^2\) Frobenius convergence of \(k_\tau\),
hence operator-norm convergence of \(\widehat K_m^\tau\).
For \(\widehat h_m^\tau\), \(g_m^0\) is independent of \(\tau\)
and \(X_m^0=-\int_\tau^t\Phi_m^{\rm cl}(t,s)\mathsf D_m^{\rm fr}g_m^0(s)\,ds\)
is continuous in the lower endpoint. Uniform bounds again give
\(L^2\) convergence by domination, including \(\tau=T\), where
all three zero-extended maps vanish.
\end{proof}

\section{Calculations for the divergent-plan counterexample}
\label{app:divergence-details}

For the model of \cref{prop:zeno-strategy-counterexample}, put
\(H=2-\tau\). The global and actor-first local state blocks are
\[
U(\tau)=\begin{pmatrix}1+H&-4H\\4H&1+H\end{pmatrix},\quad
U_p(\tau)=\begin{pmatrix}1+H&-4H\\0&1\end{pmatrix},\quad
U_q(\tau)=\begin{pmatrix}1+H&4H\\0&1\end{pmatrix}.
\]
Their determinants are \((1+H)^2+16H^2\) and \(1+H\), respectively,
so both boundary conditions hold. With
\(d_p=-(z_p-4z_q)/(1+H)\), \(d_q=-(z_q+4z_p)/(1+H)\), and
\(\lambda_p=-\lambda_q=4H/(1+H)\), the constant-plan fixed point solves
\[
a=d_p+\lambda_pb,\qquad
\bigl(1+(4H/(1+H))^2\bigr)b=d_q+\lambda_qd_p.
\]
It is unique; the cycle gain tends to \(-4\) as \(\tau\to1\).

The initial plans also satisfy the belief convention
\cref{eq:plan-generating-belief}. At time zero, the equilibrium
generator for a belief \(b\) is the constant pair
\[
\begin{pmatrix}u_1[b]\\u_2[b]\end{pmatrix}
=-\frac1{73}\begin{pmatrix}35&-4\\4&35\end{pmatrix}b.
\]
For \(X_0=(0,-s_1b_0)^\top\), take
\[
b_0^1=(0,0)^\top,\qquad
b_0^2=\bigl((35s_1-73)b_0/4,-s_1b_0\bigr)^\top.
\]
Each belief preserves its population's true initial mean. The selected
plans are \(u_1^0=0\) and \(u_2^0=b_0\), so execution up to \(s_1\)
gives \(X_{s_1}=0\). Thus the nonzero first response is genuine.

It remains to justify the induction in \cref{eq:zeno-counterexample-cone}.
Use the schedule and amplitudes of \cref{eq:zeno-counterexample-amplitudes},
and put \(S_k=|z_p(s_k)|+|z_q(s_k)|\).
The base case is \(S_1=0\). For all \(k\),
\(2\le\lambda_{p,k}\le2.1,\quad 2\le|\lambda_{q,k}|\le2.1\), and both execution
intervals have length at most \(1/128\).
If \(S_k\le0.1|b_{k-1}|\), then
\[
|d_{p,k}|\le2S_k\le0.2|b_{k-1}|,\qquad
1.8|b_{k-1}|\le|a_k|\le2.3|b_{k-1}|.
\]
Propagation to \(t_k\) gives
\[
|z_p(t_k)|+|z_q(t_k)|
\le\left(0.1+\frac{2.3+1}{128}\right)|b_{k-1}|
<0.13|b_{k-1}|,
\]
so \(|d_{q,k}|<0.26|b_{k-1}|\). Consequently,
\[
3|b_{k-1}|<(2\cdot1.8-0.26)|b_{k-1}|
<|b_k|<(2.1\cdot2.3+0.26)|b_{k-1}|<5.1|b_{k-1}|.
\]
Propagation on the second interval closes the induction:
\[
S_{k+1}\le
\left(0.1+\frac{2.3+1}{128}+\frac{2.3+5.1}{128}\right)|b_{k-1}|
<0.3|b_{k-1}|<0.1|b_k|.
\]
The strict dominance of \(\lambda_{p,k}b_{k-1}\) over \(d_{p,k}\),
and of \(\lambda_{q,k}a_k\) over \(d_{q,k}\), gives
\(\operatorname{sgn}(a_k)=\operatorname{sgn}(b_{k-1})\) and
\(\operatorname{sgn}(b_k)=-\operatorname{sgn}(b_{k-1})\).
Thus every scheduled update is genuine (the first by construction),
with \(|b_k|>3^k|b_0|\) and \(|a_k|\ge1.8\,3^{k-1}|b_0|\).
The upper bounds above give the executed-control estimate already
displayed in the main proof, since \(5.1^2/64<1\).

\label{end:appendices}
\begingroup
\raggedbottom
\interlinepenalty=10000
\bibliographystyle{siamplain}
\bibliography{references}

@inproceedings{aggarwal-zaman-basar-2022,
 author={S. Aggarwal and M. A. {uz Zaman} and T. Ba\c{s}ar},
 title={Linear quadratic mean-field games with communication constraints},
 booktitle={Proc. 2022 American Control Conference (ACC)}, publisher={IEEE},
 year={2022}, pages={1323--1329}, doi={10.23919/ACC53348.2022.9867728}}

@book{basar-olsder-1999,
 author={T. Basar and G. J. Olsder}, title={Dynamic Noncooperative Game Theory},
 edition={2nd}, series={Classics in Applied Mathematics}, publisher={SIAM},
 address={Philadelphia}, year={1999}}

@article{bensoussan-feng-huang-2021,
 author={A. Bensoussan and X. Feng and J. Huang},
 title={Linear-quadratic-{Gaussian} mean-field-game with partial observation and common noise},
 journal={Math. Control Relat. Fields}, volume={11}, year={2021}, pages={23--46}, doi={10.3934/mcrf.2020025}}

@article{bensoussan-huang-lauriere-2018,
 author={A. Bensoussan and T. Huang and M. Lauri\`ere},
 title={Mean field control and mean field game models with several populations},
 journal={Minimax Theory Appl.}, volume={3}, year={2018}, pages={173--209}}

@misc{bertucci-2022,
 author={C. Bertucci}, title={Mean field games with incomplete information},
 howpublished={arXiv preprint arXiv:2205.07703}, year={2022}}

@book{bertsekas-tsitsiklis-1989,
 author={D. P. Bertsekas and J. N. Tsitsiklis},
 title={Parallel and Distributed Computation: Numerical Methods},
 publisher={Prentice-Hall}, address={Englewood Cliffs, NJ}, year={1989}}

@article{cardaliaguet-hadikhanloo-2017,
 author={P. Cardaliaguet and S. Hadikhanloo},
 title={Learning in mean field games: The fictitious play},
 journal={ESAIM Control Optim. Calc. Var.}, volume={23}, year={2017}, pages={569--591}, doi={10.1051/cocv/2016004}}

@book{carmona-delarue-2018,
 author={R. Carmona and F. Delarue},
 title={Probabilistic Theory of Mean Field Games with Applications I: Mean Field FBSDEs, Control, and Games},
 series={Probability Theory and Stochastic Modelling}, publisher={Springer}, address={Cham}, year={2018}}

@article{casgrain-jaimungal-2020,
 author={P. Casgrain and S. Jaimungal},
 title={Mean-field games with differing beliefs for algorithmic trading},
 journal={Math. Finance}, volume={30}, year={2020}, pages={995--1034}, doi={10.1111/mafi.12237}}

@article{darouach-2000,
 author={M. Darouach}, title={Existence and design of functional observers for linear systems},
 journal={IEEE Trans. Automat. Control}, volume={45}, year={2000}, pages={940--943}}

@article{degond-herty-liu-2017,
 author={P. Degond and M. Herty and J.-G. Liu}, title={Meanfield games and model predictive control},
 journal={Commun. Math. Sci.}, volume={15}, year={2017}, pages={1403--1422}, doi={10.4310/CMS.2017.v15.n5.a9}}

@article{douglas-1966,
 author={R. G. Douglas}, title={On majorization, factorization, and range inclusion of operators on {Hilbert} space},
 journal={Proc. Amer. Math. Soc.}, volume={17}, year={1966}, pages={413--415}}

@article{feng-huang-jia-2025,
 author={X. Feng and J. Huang and Y. Jia}, title={Robust linear--quadratic mean-field-team with heterogeneous beliefs},
 journal={Systems Control Lett.}, volume={206}, year={2025}, pages={106266}, doi={10.1016/j.sysconle.2025.106266}}

@article{fernando-trinh-jennings-2010,
 author={T. L. Fernando and H. M. Trinh and L. Jennings},
 title={Functional observability and the design of minimum order linear functional observers},
 journal={IEEE Trans. Automat. Control}, volume={55}, year={2010}, pages={1268--1273}}

@article{firoozi-pakniyat-caines-2022,
 author={D. Firoozi and A. Pakniyat and P. E. Caines},
 title={A class of hybrid {LQG} mean field games with state-invariant switching and stopping strategies},
 journal={Automatica}, volume={141}, year={2022}, pages={110244}, doi={10.1016/j.automatica.2022.110244}}

@book{fudenberg-levine-1998,
 author={D. Fudenberg and D. K. Levine}, title={The Theory of Learning in Games},
 publisher={MIT Press}, address={Cambridge, MA}, year={1998}}

@book{goebel-sanfelice-teel-2012,
 author={R. Goebel and R. G. Sanfelice and A. R. Teel},
 title={Hybrid Dynamical Systems: Modeling, Stability, and Robustness},
 publisher={Princeton University Press}, address={Princeton, NJ}, year={2012}}

@article{huang-caines-malhame-2007,
 author={M. Huang and P. E. Caines and R. P. Malhame},
 title={Large-population cost-coupled {LQG} problems with nonuniform agents: Individual-mass behavior and decentralized {$\epsilon$}-{Nash} equilibria},
 journal={IEEE Trans. Automat. Control}, volume={52}, year={2007}, pages={1560--1571}}

@misc{jin-ren-yao-zhang-2026,
 author={Y. Jin and L. Ren and W. Yao and X. Zhang},
 title={Linear--quadratic mean field games under heterogeneous erroneous initial information},
 howpublished={arXiv preprint arXiv:2409.09375v4}, year={2026}, doi={10.48550/arXiv.2409.09375}}

@book{kailath-1980,
 author={T. Kailath}, title={Linear Systems}, publisher={Prentice-Hall},
 address={Englewood Cliffs, NJ}, year={1980}}

@article{lasry-lions-2007,
 author={J.-M. Lasry and P.-L. Lions}, title={Mean field games},
 journal={Japanese J. Math.}, volume={2}, year={2007}, pages={229--260}}

@article{li-nie-wang-yan-2024,
 author={M. Li and T. Nie and S. Wang and K. Yan},
 title={Incomplete information mean-field games and related {Riccati} equations},
 journal={J. Optim. Theory Appl.}, volume={203}, year={2024}, pages={2487--2508}, doi={10.1007/s10957-024-02508-0}}

@book{liberzon-2003,
 author={D. Liberzon}, title={Switching in Systems and Control},
 series={Systems \& Control: Foundations \& Applications},
 publisher={Birkh\"auser}, address={Boston}, year={2003}}

@book{ljung-1999,
 author={L. Ljung}, title={System Identification: Theory for the User},
 edition={2nd}, publisher={Prentice Hall PTR}, address={Upper Saddle River, NJ}, year={1999}}

@article{moll-ryzhik-2026,
 author={B. Moll and L. Ryzhik}, title={Mean field games without rational expectations},
 journal={Commun. Contemp. Math.}, volume={28}, number={5}, year={2026}, pages={2640007}, doi={10.1142/S0219199726400079}}

@article{montanari-duan-aguirre-motter-2022,
 author={A. N. Montanari and C. Duan and L. A. Aguirre and A. E. Motter},
 title={Functional observability and target state estimation in large-scale networks},
 journal={Proc. Natl. Acad. Sci. USA}, volume={119}, year={2022}, pages={e2113750119}}

@article{mouzouni-2020,
 author={C. Mouzouni}, title={On quasi-stationary mean field games models},
 journal={Appl. Math. Optim.}, volume={81}, year={2020}, pages={655--684}, doi={10.1007/s00245-018-9484-y}}

@article{neumann-2024,
 author={B. A. Neumann}, title={A myopic adjustment process for mean field games with finite state and action space},
 journal={Internat. J. Game Theory}, volume={53}, year={2024}, pages={159--195}, doi={10.1007/s00182-023-00866-z}}

@incollection{nisan-schapira-zohar-2008,
 author={N. Nisan and M. Schapira and A. Zohar}, title={Asynchronous best-reply dynamics},
 booktitle={Internet and Network Economics}, series={Lecture Notes in Computer Science},
 volume={5385}, publisher={Springer}, address={Berlin}, year={2008}, pages={531--538}}

@article{rodrigues-oliveira-krstic-basar-2026,
 author={V. H. P. Rodrigues and T. R. Oliveira and M. Krsti\'c and T. Ba\c{s}ar},
 title={Distributed event-triggered {Nash} equilibrium seeking},
 journal={Automatica}, volume={192}, year={2026}, pages={113164}, doi={10.1016/j.automatica.2026.113164}}

@article{sen-caines-2019,
 author={{\c S}en, N. and Caines, P. E.}, title={Mean field games with partial observation},
 journal={SIAM J. Control Optim.}, volume={57}, year={2019}, pages={2064--2091}, doi={10.1137/17M1140133}}

@misc{yang-2026,
 author={S. Yang}, title={Mean-field reinforcement learning without synchrony},
 howpublished={arXiv:2602.18026v1}, year={2026}, url={https://arxiv.org/abs/2602.18026}}
\endgroup

\end{document}